\newcommand{\TheTitle}{On  open and closed convex codes}
\DeclareExpandableDocumentCommand{\eval}{m}{\int_eval:n {#1}}
\newcommand\norm[1]{\left\lVert#1\right\rVert}
\newcommand{\Cl}{\operatorname{cl}}
\newcommand{\Int}{\operatorname{int}}
\definecolor{darkgreen}{rgb}{0,0.5,0}
\newcommand{\od}{\stackrel{\mbox {\tiny {def}}}{=}}
\newcommand{\C}{\mathcal{C}}
\newcommand{\D}{\mathcal{D}}
\newcommand{\R}{\mathbb{R}}
\newcommand{\cU}{\mathcal{U}}
\newcommand{\U}{\mathcal{U}}
\newcommand{\cC}{\mathcal{C}}
\newcommand{\W}{\mathcal{W}}
\newcommand{\V}{\mathcal{V}}
\newcommand{\conv}{\mathrm{conv}}
\newcommand{\code}{\mathrm{code}}
\newcommand{\nerve}{\mathrm{nerve}}
\newcommand{\link}{\mathrm{link}}
\newcommand{\odim}{\mathrm{odim}\,}
\newcommand{\cdim}{\mathrm{cdim}\,}
 \definecolor{ttfftt}{rgb}{0.2,0.8,0.2}
\definecolor{qqqqff}{rgb}{0,0,1}
\definecolor{ffqqtt}{rgb}{1,0,0.2}
\definecolor{uququq}{rgb}{0.25,0.25,0.25}
\definecolor{zzttqq}{rgb}{0.6,0.2,0}
\definecolor{ffqqff}{rgb}{1,0,1}
\theoremstyle{definition}
\newtheorem{theorem}{Theorem}[section]
\newtheorem{proposition}[theorem]{Proposition}
\newtheorem{lemma}[theorem]{Lemma}
\newtheorem{definition}[theorem]{Definition}
\title{{\TheTitle}
}
\author{
  Joshua Cruz
  }
  \address{Department of Mathematics, Duke University}
  \email{joshua.cruz@duke.edu}
 \author{ Chad Giusti
 }
  \address{Departments of Bioengineering \& Mathematics, University of  Pennsylvania }
  \email{cgiusti@seas.upenn.edu}
 \author{ Vladimir Itskov
    }
     \address{Department of Mathematics, The Pennsylvania State University}
      \email{vladimir.itskov@math.psu.edu}
     \author{ Bill Kronholm}
         \address{Department of Mathematics, Whittier College}
	\email{wkronholm@whittier.edu}
\DeclareExpandableDocumentCommand{\eval}{m}{\int_eval:n {#1}}
\begin{document}
\maketitle

\begin{abstract} Neural codes serve as a language for neurons in the brain.  {\it Convex codes}, which arise from the pattern of intersections of convex sets in Euclidean space, are of particular relevance to neuroscience.
Not every code is convex, however, and the combinatorial properties of a code that determine its convexity are still poorly understood.  Here we find that a code that can be realized by a collection of open convex sets may or may not be realizable by closed convex sets, and vice versa, establishing that {\it open convex} and {\it closed convex} codes are distinct classes.  We also prove that {\it max intersection-complete} codes (i.e. codes that contain all intersections of maximal codewords) are both open convex and closed convex, and provide an upper bound for their minimal embedding dimension. Finally, we show that the addition of non-maximal codewords to an open convex code preserves convexity.
\end{abstract} 

\section{Introduction.}
 The brain represents information via patterns of neural activity.  Often, one can think of these patterns as strings of binary responses, where each neuron is ``on'' or ``off'' according to whether or not a given stimulus lies inside its receptive field.  In this scenario,  the {\it receptive field} $U_i \subset X$ of a neuron $i$ is simply the subset of stimuli to which it responds, with $X$ being the entire stimulus space.  A collection $\U = \{U_1,\ldots,U_n\}$ of receptive fields for a population of neurons $[n] \od \{1,\ldots,n\}$ gives rise to the {combinatorial code}\footnote{A {\it combinatorial code} is any collection of subsets $\C \subseteq 2^{[n]}$.  Each $\sigma \in \C$ is called a {\it codeword}.}
  \begin{equation*}  
\code(\U,X) \od \{ \sigma \subseteq [n] \text{ such that } \, A^{\U}_\sigma \neq \varnothing \} \subseteq 2^{[n]},
\end{equation*} 
where $2^{[n]}$ is the set of all subsets of $[n]$, and the {\it atoms} $A^{\U}_\sigma $ correspond to regions of the stimulus space carved out by $\U$:
\begin{equation*}  
A^{\U}_\sigma \od \left( \bigcap_{i \in \sigma} U_i \right) \setminus  \bigcup_{j \not \in \sigma} U_j \subseteq X.
\end{equation*} 
Here every stimulus $x \in A^{\U}_\sigma$ gives rise to the same neural response pattern, or {codeword}, $\sigma \subseteq [n]$.
By convention, $\cap_{i\in \varnothing}U_i=X$ and thus  $A^{\U}_\varnothing = X\setminus \left ( \bigcup _{i=1}^n U_i\right) $, so that $\varnothing  \in \code(\cU,X)$ if and only if $X \neq  \bigcup_{i=1}^n U_i$.  Note that  $\code(\U,X)$  may fail to be an abstract simplicial complex;  see e.g. Figure \ref{ex:codeexample}.

\medskip 
 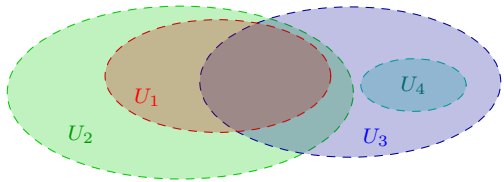
\begin{figure}[h]
\begin{center}
  \definecolor{qqccqq}{rgb}{0,0.8,0}
      \definecolor{qqaaqq}{rgb}{0,0.6,0}
    \definecolor{qqffqq}{rgb}{0,1,0}
\definecolor{qqqqzz}{rgb}{0,0,0.6}
\definecolor{rrqqqq}{rgb}{0.8,0,0}
\definecolor{ffqqqq}{rgb}{1,0,0}
\definecolor{qqqqaa}{rgb}{0,0,0.4}
\definecolor{qqzzzz}{rgb}{0,0.6,0.6}
\definecolor{qqrrrr}{rgb}{0,0.4,0.4}
\begin{tikzpicture}[line cap=round,line join=round,x=1.0cm,y=1.0cm, scale=1]
\draw [line width=0.4pt,dash pattern=on 3pt off 3pt,color=qqccqq,fill=qqccqq,fill opacity=0.25] (-2.0,2.9) ellipse (2.3cm and 1.15cm);
\draw [line width=0.4pt,dash pattern=on 3pt off 3pt,color=qqqqzz,fill=qqqqzz,fill opacity=0.25] (0.26,3.04) ellipse (2cm and 1cm);
\draw [line width=0.4pt,dash pattern=on 3pt off 3pt,color=rrqqqq,fill=rrqqqq,fill opacity=0.25] (-1.5,3.12) ellipse (1.5cm and .75cm);
\draw [line width=0.4pt,dash pattern=on 3pt off 3pt,color=qqzzzz,fill=qqzzzz,fill opacity=0.25] (1.1,3) ellipse (0.7cm and .35cm);
\begin{scriptsize}
\draw[color=ffqqqq] (-2.44,2.84) node {$U_1$};
\draw[color=qqqqff] (0.6,2.3) node {$U_3$};
\draw[color=qqaaqq] (-3.32,2.34) node {$U_2$};
\draw[color=qqrrrr] (1.1,3) node {$U_4$};
\end{scriptsize}
\end{tikzpicture}
\caption{An example of a cover $\U=\{U_i\} $ and its code, $\C= \code(\cU,X)  = \{\varnothing, 2, 3, 12, 23, 34, 123\}$, where $X = \mathbb R^2$.  Here we denote a codeword $\{i_1, i_2, \dots, i_k\} \in \cC$ by the string $i_1i_2\dots i_k$;  for example, $\{1,2,3\}$ is abbreviated to $123$.
Since $13 \not\in \cC$ but $13 \subset 123$, $\cC$ is not a simplicial complex.
}
\label{ex:codeexample}
\end{center}
\end{figure}


\begin{definition}\label{def:convex} 
We say that a combinatorial code $\C\subseteq 2^{[n]}$ is {\it open convex} if $\C = \code(\U,X)$ for a collection $\U=\{U_i\}_{i=1}^n$ of open convex subsets  $U_i  \subseteq X \subseteq \R^d$   for some $d\geq 1$.  Similarly, we say that $\C$ is {\it closed convex} if $\C = \code(\U,X)$  for a collection of  closed convex subsets  $U_i  \subseteq X \subseteq \R^d$.  
For an open convex code $\C$, the {\it embedding dimension} $\odim(\C)$ is the minimal $d$ for which there exists an open convex realization of $\C$ as $\code(\U,X)$.  Similarly, for a closed convex code $\cdim(\C)$ is the minimal $d$ that admits a closed convex realization of $\C$.
\end{definition}

Convex codes have special relevance to neuroscience because neurons in a number of areas of mammalian brains possess convex receptive fields.
A paradigmatic example is that of hippocampal {\it place cells} \cite{OKeefe1971}, a class of neurons in the hippocampus that act as position sensors.  Here the 
 relevant stimulus space $X\subset \mathbb{R}^d$ is the animal's environment, with $d\in \{1,2,3\}$ \cite{Yartsev367}. Receptive fields can be easily computed when both the neuronal activity data and the relevant stimulus space are available.  However, in many situations the relevant stimulus space for  a given neural population may be unknown.
This raises the natural question: how can one determine from the intrinsic properties of a combinatorial code whether or not it is an open (or closed) convex code?
What is the embedding dimension of a code -- that is, what is the dimension of the relevant stimulus space? How are open and closed convex codes related?  \\

The code of a cover  carries more information about the geometry/topology of the underlying space than the nerve of the cover.  For example, it imposes more constraints on the embedding dimension than what is imposed by the nerve \cite{neuralring}.  Arrangements of convex sets are ubiquitous in applied and computational topology, however all the standard constructions (e.g. the \v{C}ech complex) rely only on  the nerve of the cover, and do not carry any information about the arrangement beyond the nerve.    While the properties of nerves of convex covers were previously studied in \cite{kalaifvector1984,kalaifvector1986, Tancer-d-rep}, codes of convex covers are much less understood. Moreover, although any simplicial complex can be realized as the nerve of a convex cover (in high enough dimension), not all combinatorial codes can be realized from such convex set arrangements in Euclidean space. \\

 There is currently little understanding of what makes a code convex beyond `local obstructions'  to convexity \cite{GI13,Carina2015}.  Furthermore, local obstructions can only be used to show that a code is {\it not} convex, and the absence of local obstructions does not guarantee convexity of the code \cite{lienkaemper2015obstructions}.   To show that a code {\it is} convex, one must produce a convex realization, and there are few results 
  that guarantee such an open (or closed) convex realization exists.
Our first main result makes significant progress in this regard, as it provides a general  condition for determining that a code is convex from combinatorial properties alone.  Specifically, we show that {\it max intersection-complete} codes -- i.e., codes that contain all intersections of their maximal\footnote{A codeword in $\sigma \in \C$ is {\it maximal} if, as a subset $\sigma \subseteq [n]$, it is not contained in any other codeword of $\C$.} codewords -- are both open convex and closed convex. 
 
\begin{theorem}\label{t:mic}
Suppose $\C \subset 2^{[n]}$ is a max intersection-complete code. Then $\C$ is both open convex and closed convex.  Moreover, the embedding dimensions satisfy $\odim(\C) \leq \max\{2,(k-1)\}$ and $\cdim(\C) \leq \max\{2,(k-1)\}$, where $k$ is the number of maximal codewords of $\C$. 
\end{theorem}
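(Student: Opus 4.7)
Let $M_1, \dots, M_k$ denote the maximal codewords of $\C$ and define
\[
\M^* := \left\{\bigcap_{j \in S} M_j : \emptyset \neq S \subseteq [k]\right\} \cup \{\emptyset\}.
\]
Max intersection-completeness gives $\M^* \setminus \{\emptyset\} \subseteq \C$, and every codeword in $\C \setminus \M^*$ is strictly contained in some $M_j$ and hence non-maximal in $\C$. The plan is first to realize $\M^*$ as both an open and a closed convex code in $\mathbb{R}^{k-1}$ when $k \geq 3$, by a uniform barycentric construction, and then to adjoin the remaining codewords of $\C$ via the non-maximal codeword adjunction result announced in the abstract.

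For the base construction, place $k$ affinely independent points $v_1, \dots, v_k \in \mathbb{R}^{k-1}$ at the vertices of a simplex $\Delta$, and for $x \in \Delta$ let $(\mu_1(x), \dots, \mu_k(x))$ be its barycentric coordinates, so that $x = \sum_j \mu_j(x) v_j$ and $\sum_j \mu_j(x) = 1$. For each neuron $i \in [n]$ write $J_i := \{j : i \in M_j\}$ and define
\[
U_i^{\mathrm{op}} := \{x \in \Delta : \mu_j(x) < \tfrac{1}{k} \text{ for all } j \notin J_i\}, \quad U_i^{\mathrm{cl}} := \{x \in \Delta : \mu_j(x) \leq \tfrac{1}{k} - \eta \text{ for all } j \notin J_i\},
\]
with $\eta > 0$ a small constant. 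Each $U_i^{\mathrm{op}}$ (respectively $U_i^{\mathrm{cl}}$) is the intersection of $\Delta$ with open (respectively closed) half-spaces, hence open (respectively closed) and convex.

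The central calculation is that the codeword at every $x$ lies in $\M^*$. Set $L(x) := \{j : \mu_j(x) \geq \tfrac{1}{k}\}$ in the open case, and $L(x) := \{j : \mu_j(x) > \tfrac{1}{k} - \eta\}$ in the closed case; in either case $\sum_j \mu_j(x) = 1$ forces $L(x) \neq \emptyset$. A direct manipulation in barycentric coordinates shows $x \in U_i$ iff $L(x) \subseteq J_i$, so that
\[
T(x) = \{i : L(x) \subseteq J_i\} = \bigcap_{j \in L(x)} M_j \in \M^*.
\]
Conversely, for each non-empty $S \subseteq [k]$ the barycenter of the face $\conv\{v_j : j \in S\}$ has $\mu_j = 1/|S| \geq 1/k$ for $j \in S$ and $\mu_j = 0$ for $j \notin S$, hence $L = S$; this realizes every codeword in $\M^* \setminus \{\emptyset\}$, and the empty codeword appears at the centroid exactly when $\bigcap_j M_j = \emptyset$. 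The main technical issue I anticipate is controlling transitions across the boundary hyperplanes $\mu_j = 1/k$ (or $1/k - \eta$) so that the open and closed constructions yield the same code with no spurious codewords; this is handled by choosing $\eta$ small and verifying that on each barycentric stratum the formula $T(x) = \bigcap_{j \in L(x)} M_j$ is valid.

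Finally, the codewords in $\C \setminus \M^*$ (and $\emptyset$, if it lies in $\C$ but is not already present) are all non-maximal in $\C$, so iteratively applying the non-maximal codeword adjunction result yields realizations of $\C$ in the same ambient dimension. The small cases $k \in \{1, 2\}$, in which the simplex construction would be $0$- or $1$-dimensional, are handled directly in $\mathbb{R}^2$ by a single convex disk or two overlapping convex disks, after which the adjunction result again fills in the remaining codewords. Combining all cases yields $\odim(\C), \cdim(\C) \leq \max\{2, k-1\}$, as required.
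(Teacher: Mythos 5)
Your base construction is essentially the same as the paper's Proposition~\ref{T:max_code}: both arrange half-spaces determined by the facets of a $(k-1)$-simplex in $\R^{k-1}$ and define $U_i$ by barycentric-coordinate inequalities indexed by which maximal codewords contain $i$, yielding a realization of $\widehat{M(\C)}$. (One cosmetic bug: as written, $U_i^{\mathrm{op}}$ is an intersection of open half-spaces with the \emph{closed} simplex $\Delta$ and hence is not open in $\R^{k-1}$; you need to take the ambient space to be the open simplex, or use the paper's unbounded version with chambers filling all of $\R^{k-1}$.)

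The place where the proposal breaks down is the final step, ``adjoin the remaining codewords of $\C$ via the non-maximal codeword adjunction result announced in the abstract.'' That result is Theorem~\ref{thm-monotone}, and there are two problems with invoking it here. First, it applies only to \emph{open} convex codes; the paper explicitly states that monotonicity is unknown for closed convex codes, so your plan for the closed case has no support at all. Second, even for the open case, Theorem~\ref{thm-monotone} gives $\odim\D \leq \odim\C + 1$, so from a realization of $\widehat{M(\C)}$ in dimension $k-1$ you would only obtain $\odim(\C) \leq k$, not the claimed $\max\{2,k-1\}$. The dimension-preserving version is the more refined Lemma~\ref{lem:Ball:rules}, which requires exhibiting a ball $B$ whose boundary sphere meets every maximal atom and for which the code is unchanged by restriction to $B$; the paper's unbounded chamber construction makes this automatic with a fixed ball, whereas your bounded-in-$\Delta$ construction would need an explicit argument (a small ball about the centroid would work, but you do not supply it).

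The paper's route to the closed case is the piece genuinely missing from the proposal: it shows that the base open cover is \emph{non-degenerate} in the sense of Definition~\ref{dfn:nondegenerate}, that Lemma~\ref{lem:Ball:rules} preserves non-degeneracy when adjoining codewords, and then invokes Theorem~\ref{thm-non-degenerate} to conclude that the open realization can be converted to a closed one by taking closures without changing the code. Your parallel $U_i^{\mathrm{cl}}$ construction only handles $\widehat{M(\C)}$; to carry a max intersection-complete $\C$ properly containing $\widehat{M(\C)}$ to a closed realization you need either a closed analogue of the monotonicity lemma (not available) or the non-degeneracy/transfer mechanism (not invoked). So the proposal establishes open convexity of $\C$ with a weaker dimension bound, but does not establish closed convexity.
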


The fact that max intersection-complete codes are open convex was first hypothesized in \cite{Carina2015}, where it was shown that these codes have no local obstructions.  In our proof we provide an explicit construction of the convex realizations and  the upper bound for the corresponding embedding dimensions.  Our next main result shows that open convex codes exhibit a certain type of  {\it monotonicity}, in the sense that adding non-maximal codewords to an open convex code preserves convexity.

\begin{theorem}\label{thm-monotone} Assume that a code $\C\subset 2^{[n]}$ is open convex.  If $\D \supset \C$ has the same maximal codewords as $\C$, then $\D$ is also open convex and has embedding dimension  $   \odim \D\leq \odim \C+1$.
\end{theorem}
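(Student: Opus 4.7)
The plan is to construct an open convex realization of $\D$ in $\R^{d+1}$, where $d = \odim \C$, by lifting an optimal realization $\U = \{U_i\}_{i=1}^n$ of $\C$ in $\R^d$ and inserting a small pocket in the extra dimension for each new codeword. Since $\C$ and $\D$ share maximal codewords, each $\sigma \in \D \setminus \C$ is strictly contained in some maximal $\tau_\sigma \in \C$, and the atom $A^\U_{\tau_\sigma}$ is nonempty. For each $\sigma$, pick $x_\sigma \in A^\U_{\tau_\sigma}$ (pairwise distinct) together with a small open ball $B_\sigma \ni x_\sigma$ in $A^\U_{\tau_\sigma}$, and a distinct positive height $y_\sigma$; set $p_\sigma := (x_\sigma, y_\sigma) \in \R^{d+1}$. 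The intention is that at $p_\sigma$ the new code will equal $\sigma$ while on the base $\R^d \times \{0\}$ it will still equal $\C$.

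The core of the construction is to start from the cylinders $U_i \times \R$ and simultaneously ``cut'' every $V_i$ with $i \in \tau_\sigma \setminus \sigma$ using the same open convex region. Concretely,
\[
V_i \;=\; (U_i \times \R) \;\cap\; \bigcap_{\sigma\, :\, i \in \tau_\sigma \setminus \sigma} H_\sigma^+,
\]
where each $H_\sigma^+$ is an open half-space chosen to (a) contain $\R^d \times \{0\}$, (b) contain every $p_{\sigma'}$ for $\sigma' \neq \sigma$, and (c) exclude an open neighborhood of $p_\sigma$. Each $V_i$ is then an intersection of open convex sets, hence open convex. Using the \emph{same} $H_\sigma^+$ for every $i \in \tau_\sigma \setminus \sigma$ is essential: at any point $p$, the $\sigma$-cut either excludes every one of these indices from the active codeword or none of them, so the codeword at $p = (x, y)$ has the form
\[
\rho(x) \;\setminus\; \bigcup_{\sigma \in S(p)} (\tau_\sigma \setminus \sigma),
\qquad \rho(x) := \{i : x \in U_i\} \in \C,
\qquad S(p) := \{\sigma : p \notin H_\sigma^+\}.
\]
By (a) no cut is active at height $0$, so $V_i \cap (\R^d \times \{0\}) = U_i$ and the base slice reproduces $\C$; at $p_\sigma$ itself only the $\sigma$-cut is active by (b), giving codeword $\tau_\sigma \setminus (\tau_\sigma \setminus \sigma) = \sigma$.

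The hard part is ensuring no codewords outside of $\D$ appear at other points, i.e. that for every $p$ the codeword above lies in $\D$. This reduces to arranging the $H_\sigma^+$ so that (i) $|S(p)| \leq 1$ at every point, and (ii) whenever $\sigma \in S(p)$ one has $\rho(x) = \tau_\sigma$. If both hold, the codeword at $p$ is either $\rho(x) \in \C \subseteq \D$ (when $S(p) = \varnothing$) or $\sigma \in \D \setminus \C$ (when $S(p) = \{\sigma\}$). To enforce these conditions the plan is to first assume without loss of generality that all $U_i$ are bounded (intersecting each with a sufficiently large open ball preserves the code and open convexity), and then apply Hahn--Banach to separate $p_\sigma$ from the closed convex hull of $(\R^d \times \{0\}) \cup \{p_{\sigma'} : \sigma' \neq \sigma\}$, choosing the heights $y_\sigma$ sufficiently spread out so that the separating hyperplanes can be tilted into very narrow pockets whose intersections with the cylinders $U_i \times \R$ lie over the respective atoms $A^\U_{\tau_\sigma}$ and are pairwise disjoint. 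Securing this geometric placement of the half-spaces is the main technical step; once in place, $\V = \{V_i\}$ realizes $\D$ as an open convex code in $\R^{d+1}$, giving $\odim \D \leq \odim \C + 1$.
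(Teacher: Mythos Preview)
Your overall strategy --- lift an optimal realization of $\C$ to $\R^{d+1}$ via cylinders and then cut with half-spaces inside the atoms of maximal codewords to expose the missing codewords of $\D$ --- is precisely the paper's strategy, and your codeword formula $\rho(x)\setminus\bigcup_{\sigma\in S(p)}(\tau_\sigma\setminus\sigma)$ together with the reduction to conditions (i) and (ii) is correct. The gap is in realizing condition (ii).

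With the unbounded cylinders $U_i\times\R$, condition (ii) cannot be satisfied by a half-space. Write $(H_\sigma^+)^c=\{(x,y):a\cdot x+by\ge c\}$. If $b\neq 0$, then for \emph{every} $x_0\in\bigcup_i U_i$ there is some $y$ with $(x_0,y)\in(H_\sigma^+)^c$, so the $\sigma$-cut is eventually active over the entire union of cylinders, not only over $A^\U_{\tau_\sigma}$; hence unwanted codewords appear at large $|y|$. If $b=0$ the half-space is vertical and cannot separate $p_\sigma=(x_\sigma,y_\sigma)$ from the base while $x_\sigma$ lies in the bounded region $D\supseteq\bigcup_i U_i$. (Relatedly, condition (a) taken literally --- that $H_\sigma^+$ contain all of $\R^d\times\{0\}$ --- already forces $a=0$; you presumably mean $D\times\{0\}$, but the obstruction above persists.) Your Hahn--Banach step has the same defect: the closed convex hull of $\R^d\times\{0\}\cup\{p_{\sigma'}:\sigma'\neq\sigma\}$ is all of $\R^d\times[0,\max_{\sigma'\neq\sigma}y_{\sigma'}]$, so only the single $p_\sigma$ with maximal height can be separated from it.

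The paper's remedy is to bound in the extra dimension as well: one intersects the lifted cylinders with a large open ball $B\subset\R^{d+1}$, obtaining $W_i=\pi^{-1}(U_i)\cap B$, and then removes from each $W_i$ (for $i\in\alpha\setminus\sigma_0$) a \emph{chord} $Q=P^+\cap B$, where $P^+$ is a closed half-space chosen so that $P^+\cap B\subset A^\W_\alpha$. Because the atom of a facet is open and meets $\partial B$, such a $P^+$ exists; and $W_i\setminus Q=W_i\cap(P^+)^c$ is again open and convex. This is exactly your cut, but applied to the bounded $W_i$ rather than the unbounded cylinder, which is what makes (ii) attainable. The paper also adds the missing codewords one at a time (Lemma~\ref{lemma:chord-cutter}, iterated to give Lemma~\ref{lem:Ball:rules}), which removes the need to place all cuts simultaneously and control their pairwise interaction.
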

\noindent It is currently unknown if the monotonicity property holds for closed convex codes. \\

Finally, we establish that open convex codes and closed convex codes are distinct classes.   This motivates us to define a non-degeneracy condition on the cover; we then show that this condition guarantees that the corresponding code is both open convex and closed convex (see Theorem \ref{thm-non-degenerate}, Section \ref{sec:open:vs:closed}).  This result suggests that combinatorial properties of convex codes are richer than originally believed.  We propose that codes that are both open convex and closed convex are the most relevant to neuroscience, as the intrinsic noise in neural responses \cite{Kandel} makes it unclear whether receptive fields should be considered to be open or closed. 
\section{Convex codes.} \label{sec:convex:codes}
We begin with  observing   that without sufficiently strong assumptions about the cover $\mathcal U=\{ U_i\}$, any code can be realized as $\code(\cU,X)$.

\begin{lemma} \label{lem:veryboring} Every code $\cC \subset 2^{[n]}$ can be obtained as $\cC=\code(\cU,X)$ for a collection of (not necessarily convex) $U_i\subset \R^1$.
\end{lemma}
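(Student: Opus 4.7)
The plan is a direct construction: use a disjoint union of distinguishing ``markers'' in $\R$, one per nonempty codeword, and define each $U_i$ to collect exactly those markers whose codeword contains $i$.

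First I would enumerate the nonempty codewords of $\cC$ as $\sigma_1,\dots,\sigma_m$ and choose pairwise disjoint open intervals $I_1,\dots,I_m\subset\R$ (any pairwise disjoint nonempty subsets of $\R$ work; intervals are a convenient choice). For each $i\in[n]$ I would then set
\[
U_i \;\od\; \bigcup_{k\,:\,i\in\sigma_k} I_k \;\subset\;\R.
\]
For a point $x\in I_k$ the membership $x\in U_i$ is equivalent to $i\in\sigma_k$, so $I_k\subseteq A^{\U}_{\sigma_k}$, showing every $\sigma_k\in\code(\U,X)$ regardless of the choice of $X\supseteq\bigcup_k I_k$. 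Conversely, if $\sigma\subseteq[n]$ does not appear in the list $\sigma_1,\dots,\sigma_m$ then no point of $\bigcup_k I_k$ lies in $A^{\U}_\sigma$, since each such point lies in some $I_k$ and hence in the atom indexed by $\sigma_k\neq\sigma$.

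The only remaining subtlety is the empty codeword. If $\varnothing\notin\cC$, take $X=\bigcup_{k=1}^m I_k$, so that $X\setminus\bigcup_i U_i=\varnothing$ and thus $\varnothing\notin\code(\U,X)$. If $\varnothing\in\cC$, enlarge $X$ to include at least one point outside every $U_i$ (for instance take $X=\R$, or adjoin a single extra point to $\bigcup_k I_k$); then $A^{\U}_\varnothing\neq\varnothing$ as required.

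I do not anticipate a genuine obstacle: the construction reduces the lemma to a finite bookkeeping exercise, and the combined verification above gives $\code(\U,X)=\cC$ exactly. The only thing worth being careful about is the distinction between ``$\varnothing\in\cC$'' and ``$\varnothing\notin\cC$'' when fixing the ambient set $X$, which is handled by the case split in the previous paragraph.
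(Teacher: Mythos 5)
Your construction is essentially identical to the paper's: one distinguishing marker per nonempty codeword (you use disjoint intervals, the paper uses single points $x_\sigma$), with $U_i$ defined as the union of markers for codewords containing $i$, and the same case split on whether $\varnothing \in \cC$ to choose $X$. The argument is correct; the choice of intervals versus points is a cosmetic difference.
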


\begin{proof} It suffices to consider the case where each $i \in [n]$ appears in some codeword $\sigma \in \cC$.
For each $\sigma \in \cC$, choose points $x_\sigma \in \R^1$ such that $x_\sigma \neq x_\tau$ if $\sigma \neq \tau$.
Define $U_i = \{x_\sigma \;|\; i \in \sigma\}$ and $\cU = \{U_i\}_{i\in[n]}$.
If $\varnothing \in \cC$, then $\cC = \code(\cU, \R^1)$.
Otherwise, $\cC=\code(\cU,X) $, where $X=\cup_{\sigma\in \C }\{x_\sigma\}$.
\end{proof}
\noindent The sets $U_i$ in the above proof are finite subsets of  $\R^1$.
However, even if one requires that the sets $U_i$ be open and connected, almost all codes  can still arise as the code of such cover. 

\begin{lemma}\label{lemma:openconnected} Any code $\cC \subset 2^{[n]}$ that contains all singleton codewords, i.e.\   $\forall i\in [n], \{i\}\in \cC$, can be obtained as $\cC=\code(\cU,X)$ for a collection of open connected subsets $U_i\subset \R^3$.
\end{lemma}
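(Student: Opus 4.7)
The plan is to realize $\cC$ as the code of thin open tubular neighborhoods of one-dimensional ``skeleta'' $W_i \subset \R^3$, exploiting that in $\R^3$ curves can be arranged to meet only at prescribed points because $1 + 1 < 3$. The extra ambient dimension (compared to the proof of Lemma~\ref{lem:veryboring}) is what lets us keep the skeleta both connected and pairwise well-separated outside a controlled finite set of points.

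Concretely, I place one point per codeword on the $z$-axis: pick distinct reals $t_\sigma$ for each $\sigma \in \cC \setminus \{\varnothing\}$ and set $x_\sigma \od (0,0,t_\sigma)$. For each neuron $i \in [n]$, pick a distinct angle $\theta_i$ and let $H_i \od \{(s\cos\theta_i,\, s\sin\theta_i,\, z) : s \geq 0,\, z \in \R\}$, so that distinct $H_i, H_j$ meet only along the $z$-axis. Using the hypothesis $\{i\} \in \cC$, I designate $x_{\{i\}}$ as the root of $U_i$, and for each $\sigma \ni i$ with $\sigma \neq \{i\}$ I connect $x_{\{i\}}$ to $x_\sigma$ by a polygonal path $\gamma_{i,\sigma} \subset H_i$ of the form
\[
x_{\{i\}} \longrightarrow (M\cos\theta_i, M\sin\theta_i, t_{\{i\}}) \longrightarrow (M\cos\theta_i, M\sin\theta_i, t_\sigma) \longrightarrow x_\sigma
\]
for a fixed large $M$. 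Let $T_i \od \bigcup_\sigma \gamma_{i,\sigma}$; this is a finite polygonal tree in $H_i$ that meets the $z$-axis precisely at the vertices $\{x_\sigma : i \in \sigma\}$. Then I choose open balls $B_\sigma \subset \R^3$ of a common small radius $r$ around each $x_\sigma$ with $r$ small enough that $B_\sigma \cap T_i = \varnothing$ whenever $i \notin \sigma$; this is possible since each edge of $T_i$ has positive distance from each $x_\tau$ with $\tau \not\ni i$, and there are only finitely many such pairs. Finally set $W_i \od T_i \cup \bigcup_{\sigma \ni i} B_\sigma$ and define $U_i$ to be the open $\e$-neighborhood of $W_i$ in $\R^3$, for a single small $\e$ to be fixed in the verification step.

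Then I verify $\code(\cU, X) = \cC$. Each $U_i$ is open by construction and path-connected through the root $x_{\{i\}}$. For each $\sigma \in \cC$, the point $x_\sigma$ lies in $U_i$ precisely when $i \in \sigma$, so $x_\sigma \in A^{\cU}_\sigma$ and $\sigma \in \code(\cU, X)$. For the reverse inclusion I pick $\e$ smaller than both (i) the distance from $W_j$ to $x_\sigma$ over all pairs with $j \notin \sigma$, and (ii) half the minimum separation between an edge of $T_i$ and an edge of $T_j$ for $i \neq j$, measured outside small neighborhoods of the shared $z$-axis vertices. All these quantities are strictly positive thanks to the half-plane arrangement, and there are only finitely many, so one $\e$ works simultaneously. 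Any point $y \in \bigcup_i U_i$ then falls into exactly one of two cases: either $y$ lies within $r + \e$ of some $x_\sigma$, in which case the realized codeword is precisely $\sigma \in \cC$ (because no $W_j$ with $j \notin \sigma$ reaches this region), or $y$ lies in the $\e$-tube around an edge of a single $T_i$ at distance $> r+\e$ from every $x_\tau$, giving the codeword $\{i\} \in \cC$. Taking $X = \R^3$ if $\varnothing \in \cC$ and $X = \bigcup_i U_i$ otherwise finishes the argument.

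The main obstacle is this final $\e$-selection: the thickening of a 1-dimensional skeleton must not accidentally let a tube of $U_j$ ($j \notin \sigma$) drift near $x_\sigma$, nor let two tubes from different half-planes $H_i, H_j$ meet away from their common $z$-axis vertices. Both failures are ruled out by the single small $\e$ above; the half-plane setup does the geometric work, reducing an a priori delicate transversality problem in $\R^3$ to a finite list of positive-distance inequalities.
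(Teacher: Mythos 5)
Your proof is correct and takes essentially the same approach as the paper: place disjoint balls at the codewords, connect the relevant balls by narrow tubes in $\R^3$ arranged to meet only inside the balls, and use the singleton hypothesis so that the bare tube segments away from the vertices produce legitimate codewords $\{i\}\in\cC$. Where the paper simply asserts that ``the tubes can always be arranged,'' your half-plane construction (distinct angles $\theta_i$ forcing $T_i\cap T_j$ onto the $z$-axis vertices) and the explicit $\e$-thickening make that claim concrete and checkable.
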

\begin{proof} Similar to the proof of Lemma \ref{lem:veryboring}, one can place disjoint open balls $B_\sigma\subset \mathbb R^3$ for each $\sigma\in \C$ and define 
$U_i =\left( \cup_{ i \in \sigma} B_\sigma\right) \cup T_i$, where each $T_i\subset \mathbb R^3$ is a collection of open ``narrow tubes'' that connect all the balls $B_\sigma$ with $\sigma\ni i$.
Because these sets are embedded in $\mathbb R^3$, the ``tubes'' $T_i$ can always be arranged so that for each $i\neq j$ the intersections $T_i\cap T_j$ are contained in the union of the balls $B_\sigma$.
By construction, these $U_i$ are connected and open and  $\cC=\code\left (\cU, \left(\cup_{i=1 }^n U_i\right) \cup B_\varnothing \right)$.
\end{proof}
\noindent The condition of having all singleton words can {\it not} be relaxed without any further assumptions. 
For example, it can be easily shown that the code $\cC = \{ \varnothing, 1, 2, 13, 23 \}$, previously described in \cite{neuralring,GI13} cannot be realized as a code of a cover by open connected sets\footnote{Indeed, assuming the converse, it follows that $U_3=\left(U_1\cap U_3\right) \cup \left(U_2\cap U_3\right) $ and, since this code does not contain a codeword $\sigma\supseteq 12$, we conclude that $U_1\cap U_2=\varnothing$ and $U_3$ is a union of two disjoint open sets, which yields a contradiction.}. \\

\subsection{Local obstructions to convexity.}

Any combinatorial code $\C\subset 2^{[n]}$ can be completed to an abstract simplicial complex $\Delta(\cC)$,  the  {\it simplicial complex of the code}, which is the minimal simplicial complex containing $\cC$.  Note that  $\Delta(\C)$ is determined solely by  the maximal codewords of $\C$ (facets  of $\Delta(\C)$).   A  code can thus be thought of as a simplicial complex with some of its non-maximal faces `missing'. Moreover,  given a collection of sets $\U$ and $X$,  one can easily see that the simplicial complex of $\code(\U,X)$ is equal to the usual {\it nerve} of the cover $\U$:
$$ \Delta\left( \code\left(\cU,X\right)\right )= 
\nerve(\cU)\od \{ 
\sigma \subseteq [n] \text{ such that} \, \bigcap_{i \in \sigma} U_i \neq \varnothing \}.
$$
For example, Figure \ref{ex:codeexample}  depicts a code of the form $\C = \code(\U,X)$ that differs from its simplicial complex $\Delta(\C)$ because the subset $\{1,3\}$ is missing. 
This results from the fact that $U_1\cap U_3 \subseteq U_2$, a set containment that is not encoded in $\nerve(\U)$.\\

Not every code arises from a closed convex or open convex cover.
For example, the code $\cC = \{ \varnothing, 1, 2, 13, 23 \}$  above cannot be an open (or closed) convex code.
The failure of this code to be convex is ``local'' in that it is  missing the codeword $3$,  and adding new codewords which do not include $i=3$ would not make this code convex.

\begin{definition}
For any $\sigma\subset [n]$ the {\it link} of $\cC$ at $\sigma$ is the code $\link_\sigma  \, \cC \subseteq 2^{[n]\setminus \sigma} \subset 2^{[n]}$ on the same set of neurons, defined as 
$$\link_\sigma \, \cC \od \left \{ \tau\, \vert\, \tau \cup \sigma \in \cC \text{ and } \tau\cap \sigma=\varnothing\right\}.$$
\end{definition}
\noindent Note that the link of a code is typically {\it not} a simplicial complex, but the simplicial complex of a link is the usual 
 $\link_\sigma \, \Delta =\{\nu\in \Delta \, \vert\, \, \nu\cup \sigma\in \Delta, \text{ and } \nu\cap \sigma =\varnothing \}$
of the appropriate simplicial complex:\footnote{This is because both $\link_\sigma \C$ and  $\link_\sigma \Delta$ have the same set of maximal elements.} 
\begin{equation*} \Delta\left(\link_\sigma \, \cC \right)=\link_\sigma\, \Delta\left (\cC\right).
\end{equation*}
Moreover, it is easy to see that if $\cC=\code(\cU,X)$, then for every non-empty $\sigma\in \Delta(\C)$ 
$$ \link_\sigma \, \C =\code \left( \left\{ U_j \cap U_\sigma\right\}_{j\in [n]\setminus \sigma}, U_\sigma\right),\quad \text{ where } U_\sigma=\bigcap_{i\in \sigma} U_i. $$
Since any intersection of convex sets is convex, we thus observe 
\begin{lemma}\label{lemma:convexinheritance}  If $\C$ is an open (or closed) convex code, then  for any $\sigma\in \Delta\left( \C\right) $,  $\link_\sigma\, \C$ is also an open (or closed) convex code. 
\end{lemma}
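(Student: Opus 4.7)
The plan is to leverage the identity displayed immediately before the lemma, namely
\[
\link_\sigma \C \;=\; \code\!\left(\{U_j \cap U_\sigma\}_{j\in[n]\setminus\sigma},\, U_\sigma\right),
\]
where $U_\sigma = \bigcap_{i\in\sigma} U_i$. Once this identification is in hand, the lemma reduces to checking that the cover appearing on the right-hand side is a collection of open (resp.\ closed) convex sets in Euclidean space, so that it witnesses the open (resp.\ closed) convexity of $\link_\sigma \C$.

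Concretely, I would proceed as follows. First, fix an open convex realization $\C = \code(\U, X)$ with $\U = \{U_i\}_{i=1}^n \subseteq \R^d$ (the closed case is identical with ``open'' replaced by ``closed'' throughout). Since $\sigma \in \Delta(\C) = \nerve(\U)$, the set $U_\sigma$ is nonempty. Next, observe that $U_\sigma$ is convex, being a finite intersection of convex sets; and it is open (resp.\ closed) because $\sigma$ is finite and each $U_i$ is open (resp.\ closed) in $\R^d$. The same reasoning applies to each $U_j \cap U_\sigma$ for $j \in [n]\setminus \sigma$. Thus $\{U_j \cap U_\sigma\}_{j\in[n]\setminus\sigma}$ together with the ambient set $U_\sigma \subseteq \R^d$ is a bona fide open (resp.\ closed) convex cover in the sense of Definition \ref{def:convex}.

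Finally, I would justify the displayed identity by matching atoms directly. For $\tau \subseteq [n]\setminus \sigma$, a point $x$ lies in the atom $A^{\U'}_\tau$ inside $U_\sigma$ (where $\U' = \{U_j \cap U_\sigma\}_{j \in [n]\setminus\sigma}$) if and only if $x \in U_\sigma$, $x \in U_j$ for all $j \in \tau$, and $x \notin U_j$ for all $j \in [n]\setminus\sigma\setminus\tau$; but since $x \in U_\sigma$ means $x \in U_i$ for all $i \in \sigma$, this is exactly the condition $x \in A^{\U}_{\tau \cup \sigma}$ in $X$, with $\tau \cap \sigma = \varnothing$. Hence $\tau$ is a codeword on the left-hand side iff $\tau \cup \sigma \in \C$ and $\tau \cap \sigma = \varnothing$, which is the definition of $\link_\sigma \C$. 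Combining this with the previous paragraph completes the proof.

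There is essentially no serious obstacle here; the only mild subtlety is to be careful about what ``open'' or ``closed'' means in context, since the new ambient space is $U_\sigma$ rather than $X$. This is harmless because the convexity/openness/closedness properties required by Definition \ref{def:convex} are intrinsic to $\R^d$ and are inherited by finite intersections, so no reinterpretation of the topology on $U_\sigma$ is needed.
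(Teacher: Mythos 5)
Your proposal is correct and follows exactly the route the paper takes: the paper states the identity $\link_\sigma \C = \code(\{U_j\cap U_\sigma\}_{j\in[n]\setminus\sigma}, U_\sigma)$ immediately before the lemma and then simply remarks that intersections of convex sets are convex, leaving the lemma as an observation. Your write-up supplies the atom-matching verification of that identity and the (straightforward) check that finite intersections preserve openness/closedness, which is precisely what the paper leaves implicit.
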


\noindent Note that for $\sigma \in \Delta(\C)$, 
\begin{equation}\label{eq:simplicialviolator}  \sigma \in \Delta(\C) \setminus \C  \, \iff\,  \varnothing \notin \link_\sigma\, \C.
\end{equation} 
We call  the  faces of $ \Delta(\C)$,  that are ``missing'' from the code,  {\it simplicial violators} of $\cC$.
If a code $\C$ is convex, and $\sigma$ is a simplicial violator, then the convex code $\link_\sigma\, \C=\code(\{V_i\},U_\sigma)$ is special  in that  the convex sets 
$ V_j= U_j \cap U_\sigma $ cover another  convex set $U_\sigma$ that is therefore contractible. The `local obstructions' to convexity arise from a special case of   the \emph{nerve lemma}.

\begin{lemma}[Nerve Lemma, \cite{Bjorner:1996, Edels2010}]
  For any finite cover $\mathcal V  = \{V_i\}_{i \in [n]}$  by convex sets $V_i\subset \mathbb R^d$ that are either all open or all closed\footnote{A formulation of the nerve lemma which applies to finite collections of closed, convex subsets of Euclidean space appears in \cite{Edels2010}, and follows from \cite[Theorem 10.7]{Bjorner:1996}.},  the abstract simplicial complex 
\begin{equation*}  
\nerve(\mathcal V)\od \{ 
\sigma \subseteq [n] \text{ such that} \, \bigcap_{i \in \sigma} V_i \neq \varnothing \} \subset 2^{[n]}, 
\end{equation*} 
known as the \emph{nerve} of the cover is homotopy equivalent to the underlying space $X=\cup_{i\in [n]}V_i$.
\end{lemma}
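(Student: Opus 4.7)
The plan is to reduce the claim to the general nerve lemma for \emph{good covers} and exploit the fact that any finite intersection of convex sets is again convex and hence contractible when non-empty. Concretely, any non-empty intersection $V_\sigma = \bigcap_{i \in \sigma} V_i$ is convex, so contractible; this is exactly the good-cover hypothesis required by the classical nerve lemma (in either the open or closed formulation cited).

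For the open case, I would proceed via the standard homotopy-colimit argument. Let $|N(\mathcal V)|$ denote the geometric realization of the nerve and form the blow-up
$$
B \;=\; \coprod_{\sigma \in N(\mathcal V)} V_\sigma \times \Delta^\sigma \;\big/\; \sim ,
$$
with the usual face identifications. There are canonical projections $\pi_X : B \to X$ and $\pi_N : B \to |N(\mathcal V)|$. A partition of unity subordinate to the open cover $\{V_i\}$ shows that the fibres of $\pi_X$ above each $V_\sigma$ are simplices, so $\pi_X$ is a homotopy equivalence. Dually, the fibre of $\pi_N$ over the open star of $\sigma$ deformation retracts onto $V_\sigma$, contractible by convexity; so $\pi_N$ is also a homotopy equivalence. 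Composing $\pi_X^{-1}$ with $\pi_N$ gives the required equivalence $X \simeq |N(\mathcal V)|$.

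For the closed case, partitions of unity are unavailable, so I would argue by thickening. Set $V_i^{\e} = V_i + \e B_d$, the Minkowski sum with an open ball of radius $\e$. These are open and convex, and for small enough $\e$ one has $\nerve(\{V_i^{\e}\}) = \nerve(\{V_i\})$: disjoint closed convex sets in $\R^d$ are separated by a hyperplane (finitely many such separations for the finite cover), so their thickenings remain disjoint for small $\e$. The open case then gives $X^{\e} = \bigcup V_i^{\e} \simeq |N(\mathcal V)|$. Finally, nearest-point projection onto each closed convex $V_i$ is well-defined and $1$-Lipschitz, and these local retractions can be combined to build a deformation retraction of $X^{\e}$ onto $X$ as $\e \searrow 0$, completing the homotopy equivalence.

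The main technical obstacle is the stability step for the closed case: one must show that thickening does not create spurious simplices in the nerve. For bounded covers this is immediate from the positive Hausdorff distance between disjoint compact convex sets, but for unbounded convex sets one must invoke the separating hyperplane theorem together with a uniform lower bound on the separation attained on any ball containing the relevant intersection pattern. Once this stability is established, the remaining deformation-retract argument is a routine consequence of the ANR structure of closed convex subsets of $\R^d$, matching the formulation recorded in \cite{Edels2010}.
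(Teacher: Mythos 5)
The paper does not prove this lemma; it is quoted as a standard result with citations to Bj\"orner's survey and to Edelsbrunner--Harer. So there is no ``paper's own proof'' to compare against, and I can only assess your argument on its own terms.

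Your sketch for the open case is correct in outline: each non-empty $V_\sigma$ is convex hence contractible, so $\mathcal V$ is an open good cover, and the homotopy-colimit / blow-up argument (equivalently, the partition-of-unity map onto $|N(\mathcal V)|$) gives the equivalence. This is the standard proof.

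The closed case, however, has a genuine gap. You claim that for a finite family of closed convex sets, disjointness of an intersection pattern is stable under $\varepsilon$-thickening because ``disjoint closed convex sets in $\R^d$ are separated by a hyperplane.'' Separation by a hyperplane does \emph{not} give a positive distance when the sets are unbounded, and in fact no $\varepsilon>0$ works in general. Take $V_1=\{(x,y)\in\R^2 : y\ge e^x\}$ and $V_2=\{(x,y)\in\R^2 : y\le 0\}$. Both are closed and convex, $V_1\cap V_2=\varnothing$, yet $\operatorname{dist}(V_1,V_2)=0$, so for every $\varepsilon>0$ the thickenings $V_1^\varepsilon$ and $V_2^\varepsilon$ meet. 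Thus the nerve changes: $\nerve(\{V_1,V_2\})$ is two points while $\nerve(\{V_1^\varepsilon,V_2^\varepsilon\})$ is a segment. Worse, $X=V_1\cup V_2$ has two path components but $X^\varepsilon=V_1^\varepsilon\cup V_2^\varepsilon$ is connected, so no deformation retraction of $X^\varepsilon$ onto $X$ can exist; the claim that nearest-point projections ``can be combined to build a deformation retraction of $X^\varepsilon$ onto $X$'' is false here. Your final paragraph half-acknowledges the stability issue but proposes a fix (``uniform lower bound on the separation attained on any ball containing the relevant intersection pattern'') that does not repair either the stability of the nerve or the retraction of $X^\varepsilon$ onto $X$, since the failure occurs arbitrarily far out. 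The closed case genuinely requires a different mechanism, e.g.\ the CW/closed-cover version of the nerve theorem (Bj\"orner, Theorem~10.7) applied via the fact that finite intersections of closed convex sets are absolute retracts and the cover is trivially locally finite, rather than a reduction to the open case by thickening.
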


A simple corollary of Lemma \ref{lemma:convexinheritance} and the nerve lemma is the following observation (which first appeared in \cite{GI13}) that  provides a class of `local' obstructions to being an open (or closed) convex code.
\begin{proposition} \label{P:localobstruction} Let $\sigma\neq \varnothing $ be a simplicial violator of a code $\cC$.
If $\link_\sigma \, \Delta(\cC)$ is not a contractible simplicial complex, then $\cC$ is not an open (or closed) convex code.
\end{proposition}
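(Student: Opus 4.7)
The plan is to argue by contradiction, assembling the pieces already established in the excerpt. Suppose $\cC = \code(\cU, X)$ for a collection $\cU = \{U_i\}$ of open (respectively closed) convex sets, and let $\sigma$ be a nonempty simplicial violator. Since $\sigma \in \Delta(\cC)$, the intersection $U_\sigma = \bigcap_{i\in \sigma} U_i$ is nonempty, and it is convex because each $U_i$ is. By Lemma \ref{lemma:convexinheritance}, the code $\link_\sigma \cC$ is realized as $\code(\{V_j\}_{j\in[n]\setminus\sigma}, U_\sigma)$ where $V_j = U_j \cap U_\sigma$, and each $V_j$ is again open (respectively closed) and convex.

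The next step uses the characterization in (\ref{eq:simplicialviolator}): because $\sigma$ is a simplicial violator, $\varnothing \notin \link_\sigma \cC$. Unpacking the definition of the empty-atom, this says
\[
U_\sigma \setminus \bigcup_{j\in [n]\setminus \sigma} V_j = \varnothing,
\]
i.e. the sets $\{V_j\}$ actually cover $U_\sigma$. Now I would invoke the Nerve Lemma applied to this cover of $U_\sigma$ by convex open (or closed) sets: it gives
\[
\nerve(\{V_j\}_{j\in[n]\setminus\sigma}) \simeq U_\sigma.
\]
Since $U_\sigma$ is convex and nonempty, it is contractible, so $\nerve(\{V_j\})$ is contractible.

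Finally, I would identify this nerve with the link. By definition, the nerve of the cover $\{V_j\}$ is the simplicial complex of $\code(\{V_j\}, U_\sigma) = \link_\sigma\, \cC$, which the excerpt notes equals $\link_\sigma\, \Delta(\cC)$. Therefore $\link_\sigma \Delta(\cC)$ is contractible, contradicting the hypothesis. This establishes the proposition for both the open and closed convex cases simultaneously, since the Nerve Lemma as cited applies uniformly to finite covers by convex sets that are either all open or all closed. The only subtle point worth double-checking is the closed case of the Nerve Lemma (it requires the cover to consist of convex sets in Euclidean space, so that pairwise intersections are again convex and hence contractible), but this is exactly the hypothesis we have and is the version quoted from \cite{Edels2010}; so the proof is essentially a direct bookkeeping of the earlier lemmas.
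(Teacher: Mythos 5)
Your proof is correct and takes essentially the same approach as the paper: assume convexity, observe that the simplicial violator condition forces the sets $U_j \cap U_\sigma$ to cover the convex set $U_\sigma$, and then apply the Nerve Lemma to conclude $\link_\sigma \Delta(\cC)$ is contractible, contradicting the hypothesis.
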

\begin{proof} Assume the converse, i.e.\    $\cC$ is open (or closed) convex and $\sigma$ satisfies \eqref{eq:simplicialviolator}. Then the sets $U_j \cap U_\sigma$ cover a convex and  open (or closed) set $U_\sigma$, and thus by the nerve lemma the simplicial complex 
$$\nerve\left(  \left\{ U_j \cap U_\sigma\right\}_{j\in [n]\setminus \sigma}\right)=\Delta( \link_\sigma\, \C)= \link_\sigma\, \Delta(\C)$$
is contractible. 
\end{proof}

As an example, consider $\cC = \{\varnothing, 1, 2, 3, 4, 123, 124 \}$.
Then $\sigma = 12$ is a simplicial violator of $\cC$ and $\link_\sigma \, \cC = \{ 3 , 4 \}$.
Since $\Delta(\link_{\sigma} \, \cC)$ is not contractible, the code $\C$ is not the code of an open (or closed) convex cover.
This is perhaps the minimal example of a non-convex code that can be still realized by an open cover by connected sets\footnote{In fact, all the non-convex codes on three neurons (these were classified in \cite{neuralring}) cannot be realized by open (or closed) connected sets. This is because the only obstruction to convexity is the ``disconnection'' of one set, similar to the case of the code $\cC = \{ \varnothing, 1,2, 13, 23  \}$. }.\\

Note that if the condition that all sets are open, or alternatively all sets are closed, is dropped, then (at the time of this writing) there are no known obstructions for a code to arise as a code of a convex cover.
For instance, if one set is allowed to be of the ``wrong kind'', the code $\cC = \{ \varnothing, 1,2, 13, 23  \}$ above can be realized by intervals on a line, such as $U_1=(0,2)$ , $U_2=[2,4]$, $U_3=[1,3]$.
For this reason, we only consider either open or closed convex codes.

\subsection{Do  truly ``non-local''  obstructions via nerve lemma exist?}

The ``local'' obstructions to convexity in Proposition \ref{P:localobstruction} equally apply to any open (or closed) good cover, i.e.\   a cover where each non-empty intersection $U_\sigma=\cap_{i\in \sigma} U_i$ is contractible.  Since this property stems from applying the nerve lemma to the cover of $U_\sigma$ by the other contractible sets, it is natural to define a more general ``non-local'' obstruction to convexity that also stems from the nerve lemma. 
\begin{definition}
We say that a   non-empty  subset  $\sigma\subseteq [n]$ {\it covers}  a code $\C\subseteq 2^{[n]}$ if for every $\tau\in \C$,  
   $\tau\cap \sigma \neq \varnothing$. 
 \end{definition}
\noindent  Note that any code covered by at least one  non-empty set $\sigma$ does not have the  empty set. Moreover,   
$\sigma$   covers   $ \C=\code\left(\{U_i\}_{{i\in [n]} }, \bigcup_{i\in [n]} U_i\right)$ if and only if 
 $\bigcup_{i\in [n] }U_i=\bigcup_{j\in \sigma }U_j$.
\begin{lemma} \label{lemma:non-local} If there exist two  non-empty subsets $\sigma_1 ,\sigma_2 \subseteq [n]$, that both cover the code $\C\subseteq 2^{[n]}$, but  the codes $\C \cap \sigma_a \od \{ \tau\cap \sigma_a \vert \tau \in \C  \} \subseteq 2^{\sigma_a}$ for $a\in\{1,2\}$ 
have  simplicial complexes $\Delta\left(\C \cap \sigma_a \right) $  that 
are {\it not} homotopy equivalent, then $\cC$ is not  a code of a convex cover by  open (or closed) sets in $\mathbb R^d$. 
\end{lemma}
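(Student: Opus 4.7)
The plan is to argue by contradiction: assume $\cC = \code(\cU,X)$ for some open (or closed) convex cover $\cU = \{U_i\}_{i \in [n]}$ in $\R^d$, and then use the nerve lemma applied to the two sub-covers indexed by $\sigma_1$ and $\sigma_2$ to conclude that $\Delta(\cC \cap \sigma_1)$ and $\Delta(\cC \cap \sigma_2)$ must in fact be homotopy equivalent, contradicting the hypothesis.

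First I would reduce to a common underlying space. Using the characterization noted just above the lemma, the fact that $\sigma_a$ covers $\cC$ translates to the geometric identity
\[
Y \od \bigcup_{i \in [n]} U_i \;=\; \bigcup_{j \in \sigma_a} U_j \qquad \text{for both } a \in \{1,2\}.
\]
Thus $\cV^{(a)} \od \{U_j\}_{j \in \sigma_a}$ is, for each $a$, a finite cover of the \emph{same} space $Y$ by open (resp.\ closed) convex sets.

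Next I would identify the nerve of $\cV^{(a)}$ combinatorially. By definition,
\[
\nerve(\cV^{(a)}) \;=\; \bigl\{ \rho \subseteq \sigma_a \,:\, \textstyle\bigcap_{j \in \rho} U_j \neq \varnothing \bigr\}
\;=\; \bigl\{ \rho \subseteq \sigma_a \,:\, \rho \in \Delta(\cC) \bigr\},
\]
since $\bigcap_{j \in \rho} U_j \neq \varnothing$ is equivalent to $\rho$ being contained in some codeword of $\cC$. On the other hand, $\Delta(\cC \cap \sigma_a)$ consists of all subsets $\rho \subseteq \sigma_a$ with $\rho \subseteq \tau \cap \sigma_a$ for some $\tau \in \cC$, which is exactly the same set. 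Hence $\nerve(\cV^{(a)}) = \Delta(\cC \cap \sigma_a)$.

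Finally I would invoke the nerve lemma: since $\cV^{(a)}$ is a finite cover of $Y$ by open (resp.\ closed) convex sets in $\R^d$, the nerve $\nerve(\cV^{(a)})$ is homotopy equivalent to $Y$. Combining this with the identification in the previous step gives
\[
\Delta(\cC \cap \sigma_1) \;\simeq\; Y \;\simeq\; \Delta(\cC \cap \sigma_2),
\]
contradicting the assumption that the two simplicial complexes are not homotopy equivalent. Thus no such convex realization $\cU$ can exist.

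The only subtle step is the combinatorial identification $\nerve(\cV^{(a)}) = \Delta(\cC \cap \sigma_a)$; everything else is an immediate consequence of the covering hypothesis and the nerve lemma. I do not expect a genuine obstacle, only bookkeeping to be sure that the two descriptions of a subcomplex of $2^{\sigma_a}$ really do coincide.
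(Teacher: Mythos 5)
Your proposal is correct and follows essentially the same route as the paper: deduce $\bigcup_{i\in[n]}U_i=\bigcup_{j\in\sigma_a}U_j$ from the covering hypothesis, apply the nerve lemma to the subcover indexed by $\sigma_a$, and identify its nerve with $\Delta(\cC\cap\sigma_a)$. The paper states the last identification without comment, whereas you spell it out; otherwise the arguments coincide.
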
 
\begin{proof} If such convex  cover existed, then the condition that each of the non-empty subsets $\sigma_a$ covers the code $\C$ implies that  that 
$ \cup_{i\in [n] }U_i=\cup_{j\in \sigma_a }U_j$ for each $a\in \{1,2\}$. Thus, by the nerve lemma, $ \Delta(\C)$ has the same homotopy type as   each of the complexes $\Delta\left(\C \cap \sigma_a\right) $. This yields a contradiction.
\end{proof}

The above obstruction to convexity can be thought as ``non-local'' because  it is conditioned on  the homotopy type  of a subset  that covers the entire code.   
While it is straightforward to produce combinatorial codes with these  ``non-local'' obstructions, we found that every  such code that we have considered\footnote{This included computer-assisted search among  random codes.}  inevitably  possesses a local obstruction for convexity. Perhaps the smallest such example is the code $\C=\{23, 14, 123 \} $ that meets the conditions of  Lemma \ref{lemma:non-local} with $\sigma_1=\{12\}$, and  $\sigma_2=\{34\}$, but  also has a local obstruction for the simplicial violator $\sigma=\{1\}$.  The exact reason for the significant difficulty of finding a  truly ``non-local'' obstruction  is still unclear.  Nevertheless, this provides some evidence for  the conjecture that any  code $\C \subset 2^{[n]} $ that has a ``non-local'' obstruction (i.e.\   the conditions of Lemma \ref{lemma:non-local} are met) must also have a ``local'' obstruction, i.e.\   a simplicial violator $\sigma\in \Delta(\C)\setminus \C$ such that $\Delta(\link_\sigma\, \cC)$ is not a contractible simplicial complex.
 
 \medskip 
\subsection{The difference between open and closed convex codes.}\label{sec:open:vs:closed}
The homotopy type obstructions via the nerve lemma are obstructions to being a code of a good cover (as opposed to convex sets) and  equally apply to both open and  closed versions of the Definition \ref{def:convex}.  However, it turns out that  the open and the closed convex codes are  distinct classes of codes.   Perhaps a minimal example of  an open convex code that is not closed convex is the code  
 \begin{equation}\label{eq:convexcounterexample}  \C= \{123, 126, 156, 456, 345, 234, 12, 16, 56, 45, 34, 23, \varnothing \}  \subseteq 2^{[6]}.
 \end{equation} 
 This code is realizable by  an open convex cover (Figure \ref{F:openconvexcover})  and  also by  an open or closed good cover (Figure \ref{F:opengoodcoverone}).  
  
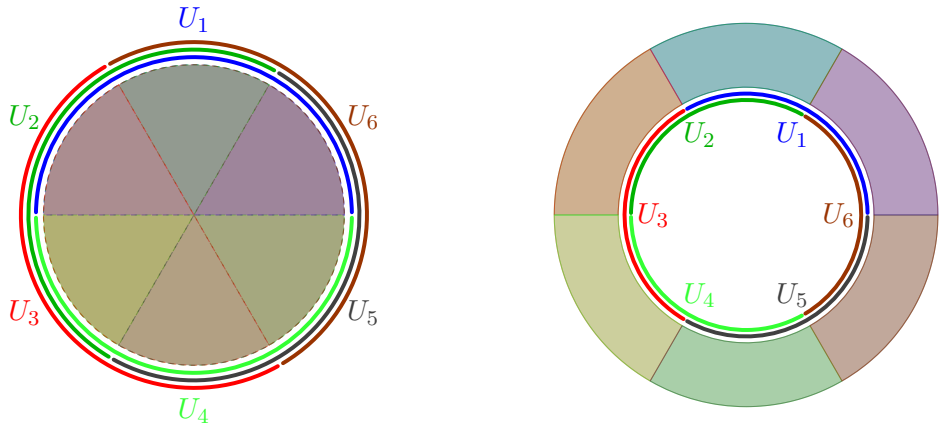
\begin{figure}[h!]\label{fig:closed-counterexample-variouscovers}
\begin{subfigure}{.4\textwidth}
\definecolor{qqccqq}{rgb}{0,0.7,0}
\definecolor{qqqqzz}{rgb}{0,0,0.6}
\definecolor{ffqqqq}{rgb}{1,0,0}
\definecolor{ttfftt}{rgb}{0.2,1,0.2}
\definecolor{qqqqff}{rgb}{0,0,1}
\definecolor{ffqqtt}{rgb}{1,0,0.2}
\definecolor{uququq}{rgb}{0.25,0.25,0.25}
\definecolor{zzttqq}{rgb}{0.6,0.2,0}
\definecolor{qqzzzz}{rgb}{0,0.6,0.6}
\definecolor{qqccqq}{rgb}{0,0.7,0}
\definecolor{qqqqzz}{rgb}{0,0,0.6}
\definecolor{ffqqqq}{rgb}{1,0,0}
\definecolor{ttfftt}{rgb}{0.2,1,0.2}
\definecolor{qqqqff}{rgb}{0,0,1}
\definecolor{ffqqtt}{rgb}{1,0,0.2}
\definecolor{uququq}{rgb}{0.25,0.25,0.25}
\definecolor{zzttqq}{rgb}{0.6,0.2,0}

\begin{center}
\begin{tikzpicture}[line cap=round,line join=round,x=1cm,y=1cm]
\filldraw[ttfftt, dash pattern = on 2pt off 2pt, opacity = 0.5, fill=ttfftt, fill opacity = 0.25] ({2*cos(180)},{2*sin(180)}) arc [radius=2, start angle=180, delta angle=180] -- cycle;
\draw[ttfftt, line width=1.5pt] ({2.1*cos(181)},{2.1*sin(181)}) arc [radius=2.1, start angle=181, delta angle=178];
\draw[color=ttfftt] ({2.6*cos(270)},{2.6*sin(270)}) node {$U_4$};
\filldraw[qqqqff, dash pattern = on 2pt off 2pt, opacity = 0.5, fill=qqqqff, fill opacity = 0.25] ({2*cos(0)},{2*sin(0)}) arc [radius=2, start angle=0, delta angle=180]                  -- cycle;
\draw[qqqqff, line width=1.5pt] ({2.1*cos(1)},{2.1*sin(1)}) arc [radius=2.1, start angle=1, delta angle=178];
\draw[color=qqqqff] ({2.6*cos(90)},{2.6*sin(90)}) node {$U_1$};
\filldraw[qqccqq, dash pattern = on 2pt off 2pt, opacity = 0.5, fill=qqccqq, fill opacity = 0.25] ({2*cos(60)},{2*sin(60)}) arc [radius=2, start angle=60, delta angle=180]  -- cycle;
\draw[qqccqq, line width=1.5pt] ({2.2*cos(61)},{2.2*sin(61)}) arc [radius=2.2, start angle=61, delta angle=178];
\draw[color=qqccqq] ({2.6*cos(150)},{2.6*sin(150)}) node {$U_2$};
\filldraw[ffqqqq, dash pattern = on 2pt off 2pt, opacity = 0.5, fill=ffqqqq, fill opacity = 0.25] ({2*cos(120)},{2*sin(120)}) arc [radius=2, start angle=120, delta angle=180] -- cycle;
\draw[ffqqqq, line width=1.5pt] ({2.3*cos(121)},{2.3*sin(121)}) arc [radius=2.3, start angle=121, delta angle=178];
\draw[color=ffqqqq] ({2.6*cos(210)},{2.6*sin(210)}) node {$U_3$};
\filldraw[uququq, dash pattern = on 2pt off 2pt, opacity = 0.5, fill=uququq, fill opacity = 0.25] ({2*cos(240)},{2*sin(240)}) arc [radius=2, start angle=240, delta angle=180]  -- cycle;
\draw[uququq, line width=1.5pt] ({2.2*cos(241)},{2.2*sin(241)}) arc [radius=2.2, start angle=241, delta angle=178];
\draw[color=uququq] ({2.6*cos(330)},{2.6*sin(330)}) node {$U_5$};
\filldraw[zzttqq, dash pattern = on 2pt off 2pt, opacity = 0.5, fill=zzttqq, fill opacity = 0.25] ({2*cos(300)},{2*sin(300)}) arc [radius=2, start angle=300, delta angle=180]                  -- cycle;
\draw[zzttqq, line width=1.5pt] ({2.3*cos(301)},{2.3*sin(301)}) arc [radius=2.3, start angle=301, delta angle=178];
\draw[color=zzttqq] ({2.6*cos(30)},{2.6*sin(30)}) node {$U_6$};
\end{tikzpicture}
\end{center}
\caption{An  open convex realization of $\C $}
\label{F:openconvexcover}
\end{subfigure} 
\hspace{0.02\textwidth}
\begin{subfigure}{.4\textwidth}
\definecolor{qqccqq}{rgb}{0,0.7,0}
\definecolor{qqqqzz}{rgb}{0,0,0.6}
\definecolor{ffqqqq}{rgb}{1,0,0}
\definecolor{ttfftt}{rgb}{0.2,1,0.2}
\definecolor{qqqqff}{rgb}{0,0,1}
\definecolor{ffqqtt}{rgb}{1,0,0.2}
\definecolor{uququq}{rgb}{0.25,0.25,0.25}
\definecolor{zzttqq}{rgb}{0.6,0.2,0}
\definecolor{qqzzzz}{rgb}{0,0.6,0.6}

\definecolor{qqccqq}{rgb}{0,0.7,0}
\definecolor{qqqqzz}{rgb}{0,0,0.6}
\definecolor{ffqqqq}{rgb}{1,0,0}
\definecolor{ttfftt}{rgb}{0.2,1,0.2}
\definecolor{qqqqff}{rgb}{0,0,1}
\definecolor{ffqqtt}{rgb}{1,0,0.2}
\definecolor{uququq}{rgb}{0.25,0.25,0.25}
\definecolor{zzttqq}{rgb}{0.6,0.2,0}
\definecolor{qqzzzz}{rgb}{0,0.6,0.6}
\begin{center}
\begin{tikzpicture}[line cap=round,line join=round,x=.85cm,y=.85cm]

\filldraw[qqqqff, opacity = 0.5, fill=qqqqff, fill opacity = 0.25] ({2*cos(0)},{2*sin(0)}) arc [radius=2, start angle=0, delta angle=120]
                  -- ({3*cos(120)},{3*sin(120)}) arc [radius=3, start angle=120, delta angle=-120]
                  -- cycle;
\draw[qqqqff, line width=1.5pt] ({1.9*cos(1)},{1.9*sin(1)}) arc [radius=1.9, start angle=1, delta angle=118];
\draw[color=qqqqff] ({1.45*cos(60)},{1.45*sin(60)}) node {$U_1$};

\filldraw[qqccqq, opacity = 0.5, fill=qqccqq, fill opacity = 0.25] ({2*cos(60)},{2*sin(60)}) arc [radius=2, start angle=60, delta angle=120]
                  -- ({3*cos(180)},{3*sin(180)}) arc [radius=3, start angle=180, delta angle=-120]
                  -- cycle;
\draw[qqccqq, line width=1.5pt] ({1.8*cos(61)},{1.8*sin(61)}) arc [radius=1.8, start angle=61, delta angle=118];
\draw[color=qqccqq] ({1.45*cos(120)},{1.45*sin(120)}) node {$U_2$};

\filldraw[ffqqqq, opacity = 0.5, fill=ffqqqq, fill opacity = 0.25] ({2*cos(120)},{2*sin(120)}) arc [radius=2, start angle=120, delta angle=120]
                  -- ({3*cos(240)},{3*sin(240)}) arc [radius=3, start angle=240, delta angle=-120]
                  -- cycle;
\draw[ffqqqq, line width=1.5pt] ({1.9*cos(121)},{1.9*sin(121)}) arc [radius=1.9, start angle=121, delta angle=118];
\draw[color=ffqqqq] ({1.45*cos(180)},{1.45*sin(180)}) node {$U_3$};

\filldraw[ttfftt, opacity = 0.5, fill=ttfftt, fill opacity = 0.25] ({2*cos(180)},{2*sin(180)}) arc [radius=2, start angle=180, delta angle=120]
                  -- ({3*cos(300)},{3*sin(300)}) arc [radius=3, start angle=300, delta angle=-120]
                  -- cycle;
\draw[ttfftt, line width=1.5pt] ({1.8*cos(181)},{1.8*sin(181)}) arc [radius=1.8, start angle=181, delta angle=118];
\draw[color=ttfftt] ({1.45*cos(240)},{1.45*sin(240)}) node {$U_4$};

\filldraw[uququq, opacity = 0.5, fill=uququq, fill opacity = 0.25] ({2*cos(240)},{2*sin(240)}) arc [radius=2, start angle=240, delta angle=120]
                  -- ({3*cos(0)},{3*sin(0)}) arc [radius=3, start angle=0, delta angle=-120]
                  -- cycle;

\draw[uququq, line width=1.5pt] ({1.9*cos(241)},{1.9*sin(241)}) arc [radius=1.9, start angle=241, delta angle=118];
\draw[color=uququq] ({1.45*cos(300)},{1.45*sin(300)}) node {$U_5$};

\filldraw[zzttqq, opacity = 0.5, fill=zzttqq, fill opacity = 0.25] ({2*cos(300)},{2*sin(300)}) arc [radius=2, start angle=300, delta angle=120]
                  -- ({3*cos(60)},{3*sin(60)}) arc [radius=3, start angle=60, delta angle=-120]
                  -- cycle;

\draw[zzttqq, line width=1.5pt] ({1.8*cos(301)},{1.8*sin(301)}) arc [radius=1.8, start angle=301, delta angle=118];
\draw[color=zzttqq] ({1.45*cos(0)},{1.45*sin(0)}) node {$U_6$};
\end{tikzpicture}
\end{center}
\caption{A closed good cover realization of $\C $}
\label{F:opengoodcoverone}
 \end{subfigure}
 \caption{Two different realizations of the code $\C$ in \eqref{eq:convexcounterexample}. In both realizations, each set $U_i$ is covered by the others, and is indicated by the colored arcs external to the sets; the colors of regions are combinations of the colors of the constituent sets. For example, in (a), $U_1$ is the open upper half-disk, while in (b) $U_1$ is the top right closed annular section.}
\end{figure}
 \begin{lemma} \label{lemma:example:nonconvex}The code \eqref{eq:convexcounterexample} is not closed convex.
 \end{lemma}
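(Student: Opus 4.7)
The plan is to assume for contradiction that $\cC = \code(\cU, X)$ for $\cU = \{U_1, \dots, U_6\}$ a collection of closed convex subsets of $\R^d$. After replacing each $U_i$ with $U_i \cap B$ for a sufficiently large closed ball $B$ (which does not alter $\cC$), we may assume each $U_i$ is compact. The first step is to read off three cyclic structural constraints on the cover (indices mod $6$): (a) $U_i \cap U_{i+3} = \varnothing$, since no codeword of $\cC$ contains $\{i, i+3\}$; (b) $U_i \cap U_{i+2} \subseteq U_{i+1}$, since the only codeword in $\cC$ containing $\{i, i+2\}$ is $\{i, i+1, i+2\}$; and (c) $U_i \subseteq U_{i-1} \cup U_{i+1}$, since $\{i\} \notin \cC$ while every codeword of $\cC$ containing $i$ also contains $i-1$ or $i+1$.

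The key technical tool is a segment lemma: for any $x \in U_i \cap U_{i+2}$ and $y \in U_i \cap U_{i-2}$, the segment $[x, y] \subseteq U_i$ must cross the closed convex set $U_{i-1} \cap U_i \cap U_{i+1}$. The proof is a short case analysis on the codewords along $[x,y]$: each $U_j \cap [x, y]$ is a closed subinterval by convexity, the codewords begin at $\{i, i+1, i+2\}$ and end at $\{i-2, i-1, i\}$, and (a)--(c) leave only one sequence of valid codewords in $\cC$ (for $i = 1$ this is $123 \to 12 \to 126 \to 16 \to 156$), which must pass through the required intersection.

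Pick $p \in A_{123}$, $q \in A_{345}$, $r \in A_{156}$ and form $\triangle pqr$; its edges $[p,q], [q,r], [r,p]$ lie in $U_3, U_5, U_1$ respectively. The segment lemma produces crossings $w_3 \in A_{234}$, $w_5 \in A_{456}$, $w_1 \in A_{126}$ strictly interior to the three edges (strict by pairwise disjointness of the atoms involved). The inscribed triangle $\triangle w_1 w_3 w_5$ has edges in $U_2, U_4, U_6$, and a second application of the segment lemma to it produces points $w_2 \in A_{123}$, $w_4 \in A_{345}$, $w_6 \in A_{156}$ forming a new inscribed type-A triangle. Iterating yields a strictly nested sequence of compact convex triangles $T_0 \supsetneq T_1 \supsetneq T_2 \supsetneq \cdots$, whose intersection $T_\infty$ is a nonempty compact convex set containing the limits $p_\infty, q_\infty, r_\infty$ of the respective vertex subsequences. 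Closedness of the $U_i$'s together with (a) forces these three limit points to be pairwise distinct (e.g.\ $p_\infty = q_\infty$ would place a point in $U_2 \cap U_5 = \varnothing$).

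The main obstacle is ruling out $T_\infty$ being nondegenerate. After a generic-position perturbation of $p, q, r$ within their (relatively open) atoms making the relevant segment-boundary crossings transversal, the inscription operation $T \mapsto I(T)$ is continuous in the Hausdorff metric. Combining the nested-decreasing convergences $T_n \to T_\infty$ and $I(T_n) \to T_\infty$ with the continuity of $I$ gives $T_\infty = I(T_\infty)$. But for any nondegenerate triangle $T$ every vertex of $I(T)$ lies strictly interior to an edge of $T$, so every pairwise distance in $I(T)$ is strictly smaller than $\mathrm{diam}(T)$; hence $\mathrm{diam}(I(T)) < \mathrm{diam}(T)$, contradicting $T_\infty = I(T_\infty)$. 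Thus $T_\infty$ must be degenerate; the single-point case is excluded by the pairwise distinctness of $p_\infty, q_\infty, r_\infty$, and a case analysis on the six possible orderings of three distinct collinear points (each ordering forces a ``middle'' point into one of $U_1 \cap U_4$, $U_2 \cap U_5$, or $U_3 \cap U_6$, violating (a)) excludes the collinear case, yielding the desired contradiction.
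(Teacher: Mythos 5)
Your structural observations (a)--(c) and the segment lemma are correct, and the inscribed-triangle idea does point at the real obstruction. However, the route differs substantially from the paper's, and the middle of your argument has a genuine gap. The paper sidesteps iteration entirely with a variational trick: it fixes $x_{345}\in A_{345}$, $x_{156}\in A_{156}$, then chooses $x_{123}\in A_{123}$ \emph{minimizing} the distance to the segment $M=\overline{x_{345}x_{156}}$ (this minimum exists because for this code $A_{123}=U_1\cap U_2\cap U_3$ is actually compact, since $123$ is a maximal codeword with no supersets in $\cC$). A single application of the segment argument to the two sides emanating from $x_{123}$ produces $x_{126}$ and $x_{234}$, and one more application to $\overline{x_{234}x_{126}}$ produces a point $y_{123}\in A_{123}$ strictly interior to the triangle, hence strictly closer to $M$ than $x_{123}$ --- immediate contradiction, no limits required.

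The gap in your version is the claim that the inscription operation $T\mapsto I(T)$ can be made continuous by a one-time ``generic-position perturbation'' of $p,q,r$. The segment lemma gives existence, not uniqueness: the crossing $U_{i-1}\cap U_i\cap U_{i+1}\cap[x,y]$ is a closed subsegment, so $I$ is a multivalued correspondence, and there is no canonical single-valued continuous selection. Worse, the transversality you would need has to hold at the crossing produced at every step of an \emph{infinite} iteration, and a perturbation of the initial triple cannot control that. (You also describe the atoms as ``relatively open''; for a closed cover they are generally neither open nor closed, though by luck $A_{123},A_{345},A_{156}$ are closed here.) The argument is salvageable --- instead of invoking continuity of $I$, extract convergent subsequences of the vertex sequences, use closedness of the $U_i$'s and maximality of $123,345,156$ to place the limits in the correct atoms, note the inscribed limit points are distinct from the ambient limit vertices because the corresponding atoms are disjoint, and then run the diameter/collinearity dichotomy on the limiting configuration --- but all of that machinery is exactly what the paper's ``choose the extremal point first'' move eliminates.
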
 
 \noindent  The proof is given in the Appendix, Section \ref{subsec5.1}. 
 A different  example, 
\begin{equation}\label{eq:Anne:example}
\C = \{2345, 124, 135, 145, 14, 15, 24, 35, 45, 4, 5\} \subseteq 2^{[5]}, 
\end{equation}
  was originally considered in \cite{lienkaemper2015obstructions}, where it was proved that it is not open convex and possesses a  realization by a good open cover (Figure \ref{F:opengoodcover}), thus does not have any 
``local obstructions'' to convexity. 
 However, it turns out that this code is  closed convex (see a closed realization in Figure \ref{F:closedconvexcover}). 
 
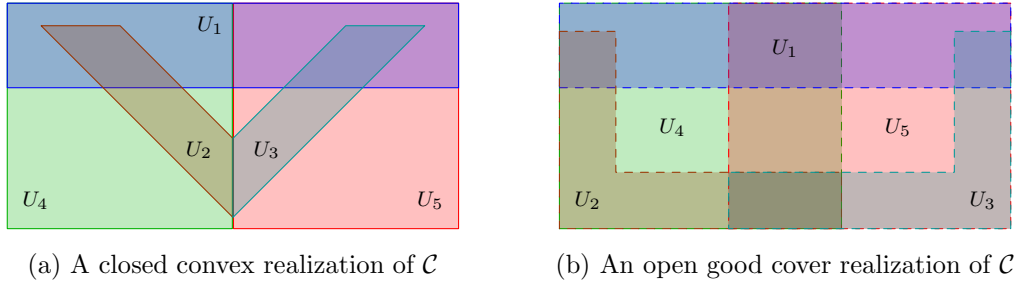
\begin{figure}[h!]\label{fig:variouscovers}
\begin{subfigure}{.4\textwidth}
\definecolor{qqccqq}{rgb}{0,0.7,0}
\definecolor{qqqqzz}{rgb}{0,0,0.6}
\definecolor{ffqqqq}{rgb}{1,0,0}
\definecolor{ttfftt}{rgb}{0.2,1,0.2}
\definecolor{qqqqff}{rgb}{0,0,1}
\definecolor{ffqqtt}{rgb}{1,0,0.2}
\definecolor{uququq}{rgb}{0.25,0.25,0.25}
\definecolor{zzttqq}{rgb}{0.6,0.2,0}
\definecolor{qqzzzz}{rgb}{0,0.6,0.6}
\begin{center}
\begin{tikzpicture}[line cap=round,line join=round,>=triangle 45,x=1.5cm,y=1.5cm]
\draw [fill=qqccqq, fill opacity=0.25,qqccqq]  (-2,-2) rectangle (-0.005,0);
\draw [fill=ffqqqq, fill opacity=0.25,ffqqqq]  (.005,-2) rectangle (2,0);
\draw [fill=qqqqff, fill opacity=0.25,qqqqff]  (-2,-0.75) rectangle (2,0);
\draw [fill=zzttqq, fill opacity = 0.25, zzttqq] (-1.7, -0.2) -- (-1, -0.2) -- (0, -1.2) -- (0, -1.9) -- cycle;
\draw [fill=qqzzzz, fill opacity = 0.25, qqzzzz] (1.7, -0.2) -- (1, -0.2) -- (0, -1.2) -- (0, -1.9) -- cycle;
\begin{scriptsize}
\draw[color=black] (-0.2,-0.2) node {$U_1$};
\draw[color=black] (-0.3,-1.3) node {$U_2$};
\draw[color=black] (-1.75,-1.75) node {$U_4$};
\draw[color=black] (1.75,-1.75) node {$U_5$};
\draw[color=black] (0.3,-1.3) node {$U_3$};
\end{scriptsize}
\end{tikzpicture}
\end{center}
\caption{A closed convex realization of $\C $}
\label{F:closedconvexcover}
\end{subfigure} 
\hspace{0.02\textwidth}
\begin{subfigure}{.4\textwidth}
\definecolor{qqccqq}{rgb}{0,0.7,0}
\definecolor{qqqqzz}{rgb}{0,0,0.6}
\definecolor{ffqqqq}{rgb}{1,0,0}
\definecolor{ttfftt}{rgb}{0.2,1,0.2}
\definecolor{qqqqff}{rgb}{0,0,1}
\definecolor{ffqqtt}{rgb}{1,0,0.2}
\definecolor{uququq}{rgb}{0.25,0.25,0.25}
\definecolor{zzttqq}{rgb}{0.6,0.2,0}
\definecolor{qqzzzz}{rgb}{0,0.6,0.6}
\begin{center}
\begin{tikzpicture}[line cap=round,line join=round,>=triangle 45,x=1.5cm,y=1.5cm]
\draw [dash pattern=on 3pt off 3pt,fill=qqccqq, fill opacity=0.25,qqccqq]  (-2,-2) rectangle (0.5,0);
\draw [dash pattern=on 3pt off 3pt,fill=ffqqqq, fill opacity=0.25,ffqqqq]  (-0.5,-2) rectangle (2,0);
\draw [dash pattern=on 3pt off 3pt,fill=qqqqff, fill opacity=0.25,qqqqff]  (-2,0) rectangle (2,-0.75);
\draw [dash pattern=on 3pt off 3pt,fill=zzttqq, fill opacity = 0.25, zzttqq] (-2, -0.25) -- (-1.5, -0.25) -- (-1.5,-1.5) -- (0.5, -1.5) -- (0.5,-2) -- (-2, -2) -- cycle;
\draw [dash pattern=on 3pt off 3pt,fill=qqzzzz, fill opacity = 0.25, qqzzzz] 
(2, -0.25) -- (1.5, -0.25) -- (1.5,-1.5) -- (-0.5, -1.5) -- (-0.5,-2) -- (2, -2) -- cycle;

\begin{scriptsize}
\draw[color=black] (-0,-0.4) node {$U_1$};
\draw[color=black] (-1.75,-1.75) node {$U_2$};
\draw[color=black] (-1,-1.1) node {$U_4$};
\draw[color=black] (1,-1.1) node {$U_5$};
\draw[color=black] (1.75,-1.75) node {$U_3$};
\end{scriptsize}
\end{tikzpicture}
\end{center}
\caption{An open good cover realization of $\C $}
\label{F:opengoodcover}
 \end{subfigure}
 \caption{Two different realizations of the code in \eqref{eq:Anne:example}.}
\end{figure}
\medskip 

  The examples in \eqref{eq:convexcounterexample} and \eqref{eq:Anne:example} show that 
open convex and closed convex are distinct classes of codes. Moreover, they illustrate that one cannot generally ``convert''  an open convex realization   into  a closed convex realization  or vice versa by simply taking closures  or interiors of sets in a  cover. Nevertheless, it is  intuitive that open and closed versions of a ``sufficiently non-degenerate''  cover should yield  the same code.  
 
A natural candidate for such a condition would be that the sets in the cover $\U$ are {\it in general position}, i.e. there exists $\varepsilon>0$ such that any cover $\V=\{V_i\}$ whose sets  $V_i$ are no further than $\varepsilon$ from  $U_i$ in the Hausdorff distance\footnote{Recall that the Hausdorff distance between two subsets $U$ and $V$ of a Euclidean space is defined as $$d_{H}(U,V) = \max \{\,\sup_{x \in U} \{  \inf_{y \in V} \norm{x-y}\},\, \sup_{y \in V}  \{ \inf_{x \in U} \norm{x-y} \}\, \}.$$}, has the same code: $ \code(\U,\mathbb R^d)= \code(\V,\mathbb R^d)$. However, being in general position is too strong a condition. This is because there are  covers of interest (such as those in Section \ref{sec:maxint}) that are not in general position yet yield the same code after taking the closure or interior. We therefore consider the following  weaker condition.  

\begin{definition}\label{dfn:nondegenerate} A cover $\mathcal{U} =\{U_i\}_{i\in [n] }$, with   $ U_i\subseteq \mathbb R^d$,  is {\it non-degenerate} 
if the following two conditions hold:
\begin{itemize}
\item[(i)] For all $\sigma \in \code(\mathcal{U},\mathbb R^d )$, the atoms $A^{\U}_\sigma$  are  {\it top-dimensional}, i.e.\   any non-empty intersection
 with an open set $B\subseteq \mathbb R^d $ has non-empty interior: 
 \begin{equation*} \label{eq:ndg}
B \text{ is open and }   A^{\U}_\sigma\cap B \neq \varnothing  \implies  \Int(A^{\U}_\sigma\cap B) \neq \varnothing.
  \end{equation*}
  \item[(ii)] For all non-empty $\sigma \subseteq [n]$, $\bigcap_{i\in \sigma}\partial U_i \subseteq \partial \left(\bigcap_{i\in \sigma} U_i\right)$.
  \end{itemize}
  \end{definition}
  \noindent  Note that if a cover $\U$ is open,  convex and in general position, then it is non-degenerate (see Lemma \ref{lemma:general:postiotion} in the Appendix), while  an open  convex and non-degenerate cover need not be in general position.  
 We should also note that the two seemingly  separate conditions (i) and (ii) in the above  definition are motivated by the following observation.
  \begin{lemma}\label{lemma:nondeg} Assume that  $\mathcal{U} =\{U_i\}$ is a  finite cover by convex sets. Then, 
\begin{itemize}
\item[]  if all $U_i$ are open and $\mathcal{U}$ satisfies the condition (ii), then  it also satisfies the condition (i); 
\item[]  if all $U_i$ are closed and $\mathcal{U}$ satisfies the condition (i), then  it also satisfies the condition (ii). 
\end{itemize}
 \end{lemma}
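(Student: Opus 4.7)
The plan is to prove the two implications separately.

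For the first implication (open convex, (ii) $\Rightarrow$ (i)), I fix $\sigma\in\code(\U,\R^d)$, an open set $B$, and a point $x\in A^\U_\sigma\cap B$; the aim is to produce a point $z$ in $\Int(A^\U_\sigma\cap B)$. Partition $[n]\setminus\sigma$ into $\tau\od\{j\in[n]\setminus\sigma : x\in\partial U_j\}$ and its complement; for $j\in[n]\setminus(\sigma\cup\tau)$ we have $x\not\in\overline{U_j}$, so these indices play no role in a small enough neighborhood of $x$. If $\tau=\varnothing$, a small ball around $x$ already lies in $A^\U_\sigma\cap B$. In the interesting case $\tau\neq\varnothing$, condition (ii) applied to $\tau$ yields $x\in\partial\bigl(\bigcap_{j\in\tau}U_j\bigr)$; since this intersection is open, there exists $y$ arbitrarily close to $x$ with $y\in U_j$ for every $j\in\tau$.

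The key move is a reflection: set $z\od 2x-y$. For each $j\in\tau$, since $U_j$ is open convex and $x\in\partial U_j$, there is a supporting hyperplane $H_j$ to $\overline{U_j}$ at $x$. Using the standard fact $\Int(\overline{U_j})=U_j$ for open convex sets, $U_j$ lies strictly on one side of $H_j$, so $y\in U_j$ sits in the corresponding open half-space. The reflection $z$ is then strictly on the opposite open side, giving $z\not\in\overline{U_j}$. For $i\in\sigma$, $z$ close to $x\in U_i$ open is itself in $U_i$; for $j\in[n]\setminus(\sigma\cup\tau)$, $z$ close to $x$ stays away from $\overline{U_j}$. Combining these, $z$ has an open neighborhood contained in $A^\U_\sigma\cap B$, provided $y$ was chosen close enough to $x$. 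The main place where open convexity and hypothesis (ii) both enter is this reflection step; once it is in place the argument is routine.

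For the second implication (closed, (i) $\Rightarrow$ (ii)), the conclusion is in fact direct from closedness alone and uses neither (i) nor convexity. Given $\sigma\neq\varnothing$ and $x\in\bigcap_{i\in\sigma}\partial U_i$, closedness gives $\partial U_i\subseteq U_i$ for each $i$, so $x\in\bigcap_{i\in\sigma}U_i$; this intersection is closed, so $\partial\bigl(\bigcap_{i\in\sigma}U_i\bigr)=\bigcap_{i\in\sigma}U_i\setminus\Int\bigl(\bigcap_{i\in\sigma}U_i\bigr)$, and it suffices to show $x\not\in\Int\bigl(\bigcap_{i\in\sigma}U_i\bigr)$. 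Picking any $i_0\in\sigma$, the inclusion $\Int\bigl(\bigcap_{i\in\sigma}U_i\bigr)\subseteq\Int(U_{i_0})$ together with $x\in\partial U_{i_0}$ (so $x\not\in\Int(U_{i_0})$) forces the conclusion. I would add a brief remark that hypothesis (i) is not actually used in this direction, highlighting a genuine asymmetry between the open and closed cases in this lemma.
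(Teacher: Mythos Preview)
Your proof of the first implication is correct and follows the same line as the paper's Lemma~\ref{L:open_nondegen}: both isolate the subset $\tau$ of indices $j\notin\sigma$ with $x\in\partial U_j$, use condition~(ii) to find a nearby $y\in\bigcap_{j\in\tau}U_j$, and then push past $x$ in the direction away from $y$ to land outside every $U_j$ with $j\in\tau$. The paper parametrizes an open cone of such points directly from convexity, while you use supporting hyperplanes to certify $z=2x-y\notin\overline{U_j}$; the two arguments are interchangeable.

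For the second implication your route is genuinely different and more elementary. The paper (Lemma~\ref{lemma:key:closed:lemma}) first establishes the auxiliary inclusion $\bigcap_{i\in\tau}\partial U_i\subseteq\partial\bigl(\bigcup_{i\in\tau}U_i\bigr)$, whose proof really does use both convexity and condition~(i), and then deduces~(ii) from it. Your observation that for closed sets one always has $\bigcap_{i\in\sigma}\partial U_i\subseteq\bigcap_{i\in\sigma}U_i$ together with $\bigcap_{i\in\sigma}\partial U_i\cap\Int\bigl(\bigcap_{i\in\sigma}U_i\bigr)=\varnothing$ bypasses all of this and shows that~(ii) is automatic for \emph{any} closed cover---so the hypotheses of convexity and~(i) are indeed superfluous in this direction of Lemma~\ref{lemma:nondeg}. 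The paper's longer detour is not wasted, however: the union-boundary inclusion $\bigcap_{i\in\tau}\partial U_i\subseteq\partial\bigl(\bigcup_{i\in\tau}U_i\bigr)$ is needed later (Lemma~\ref{lemma55}) to prove $\Int\bigl(\bigcup_{i\in\sigma}U_i\bigr)=\bigcup_{i\in\sigma}\Int(U_i)$, which is essential for Theorem~\ref{thm-non-degenerate}.
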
 
\noindent  The proof is given in the Appendix (Section \ref{sec:nondeg:condition:lemmas}, Lemmas \ref{L:open_nondegen} and \ref{lemma:key:closed:lemma}).  Note that  if the sets  $U_i$ are {\it open}  and convex, then condition (i) does not imply condition (ii), similarly  if the sets  $U_i$ are {\it closed}  and convex
then condition (ii) does not imply condition (i)\footnote{ For example, the  cover 
by the  open convex sets  $U_1= \{ (x,y)\in \mathbb R^2 \,\vert\,   y>x^2\}$ and  $U_2=\{ (x,y)\in \mathbb R^2 \,\vert\,   y< -x^2\}$ satisfies condition (i), but does not satisfy condition (ii). Similarly, the closed subsets of the real line, $U_1=\{x\leq 0\}$, $U_2=\{x\geq 0\}$  satisfy condition (ii), but   do not satisfy  condition (i).}.

 For an open cover  $\mathcal{U} = \{U_i\}$, we denote by $\text{cl} (\cU) $ the cover by the closures 
$V_i=\text{cl}(U_i)$. Similarly, for a closed cover $\mathcal{U} = \{U_i\}$ we denote by $\Int(\mathcal U)$ the cover by the interiors $V_i=\Int (U_i)$. Recall  that if a set is convex, then both its  closure and its interior are convex. 

\begin{theorem}\label{thm-non-degenerate} Assume that $\mathcal{U} = \{U_i\}$ is a convex and non-degenerate cover, then 
\begin{eqnarray*}  U_i \text{ are open } \implies &     \code(\cU,\mathbb R^d) =\code\left(\text{cl}\left(\cU\right),\mathbb R^d\right);\\
      U_i \text{ are closed } \implies &   \code(\cU,\mathbb R^d) =\code\left(\Int\left (\cU\right ),\mathbb R^d \right).
\end{eqnarray*} 
\end{theorem}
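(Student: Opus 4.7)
The plan is to prove each of the two claimed equalities by establishing both inclusions $\code(\cU,\R^d)\subseteq\code(\V,\R^d)$ and $\code(\V,\R^d)\subseteq\code(\cU,\R^d)$, where $\V$ denotes $\text{cl}(\cU)$ or $\Int(\cU)$ as appropriate. In both cases, the inclusion $\code(\cU)\subseteq\code(\V)$ follows quickly from non-degeneracy condition (i), while the reverse inclusion is the substantive step and uses condition (ii) in the open case and a reflection argument in the closed case.

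For the inclusion $\code(\cU)\subseteq\code(\V)$, I would fix $\sigma\in\code(\cU)$. By condition (i) applied with $B=\R^d$, the atom $A^{\cU}_\sigma$ has non-empty interior. Any $y^*\in\Int(A^{\cU}_\sigma)$ admits an open neighborhood contained in $A^{\cU}_\sigma$, so $y^*\in\Int(U_i)$ for $i\in\sigma$, and this neighborhood misses every $U_j$ with $j\notin\sigma$, yielding $y^*\notin\text{cl}(U_j)$. In the open case these conditions put $y^*$ in $A^{\text{cl}(\cU)}_\sigma$; in the closed case they put $y^*$ in $A^{\Int(\cU)}_\sigma$.

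For the reverse inclusion in the open case, I would pick $y\in A^{\text{cl}(\cU)}_\sigma$ and set $S=\{i\in\sigma:y\in\partial U_i\}$. For $i\in\sigma\setminus S$, openness of $U_i$ gives $y\in U_i$, and for $j\notin\sigma$ the condition $y\notin\text{cl}(U_j)$ provides a neighborhood of $y$ disjoint from $U_j$. If $S=\varnothing$ then $y\in A^{\cU}_\sigma$ directly. Otherwise condition (ii) gives $y\in\bigcap_{i\in S}\partial U_i\subseteq\partial\bigl(\bigcap_{i\in S}U_i\bigr)$, producing a sequence $y_n\in\bigcap_{i\in S}U_i$ with $y_n\to y$; for large $n$ the neighborhood conditions transfer from $y$ to $y_n$, placing $y_n$ in $A^{\cU}_\sigma$.

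For the reverse inclusion in the closed case, I would pick $y\in A^{\Int(\cU)}_\sigma$ and set $\tau=\{i:y\in U_i\}\supseteq\sigma$ and $T=\tau\setminus\sigma$; each $j\in T$ satisfies $y\in\partial U_j$, and $y\in A^{\cU}_\tau$. If $T=\varnothing$ we are done. Otherwise, condition (i) applied to arbitrarily small open balls around $y$ yields a point $y^*$ arbitrarily close to $y$ with $y^*\in\Int(A^{\cU}_\tau)$, and in particular $y^*\in\Int(U_j)$ for every $j\in\tau$. The key step is to consider the reflection $y^{**}:=2y-y^*$. For each $j\in T$, convexity of $U_j$ together with $y^*\in\Int(U_j)$ and $y\in\partial U_j$ forces the ray from $y^*$ through $y$ to exit $U_j$ at $y$, so $y^{**}\notin U_j$. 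For $y^*$ sufficiently close to $y$, the facts that $y\in\Int(U_i)$ for $i\in\sigma$ and $y\notin U_k$ (closed) for $k\notin\tau$ ensure $y^{**}\in U_i$ and $y^{**}\notin U_k$. Combining these, $y^{**}\in A^{\cU}_\sigma$.

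The main obstacle is the reverse inclusion in the closed case: a naive approximation analogous to the open case does not suffice, because one needs to escape every $U_j$ for $j\in T$ simultaneously, whereas condition (ii) only produces sequences leaving the common intersection $\bigcap_{j\in T}U_j$, not every individual $U_j$. The reflection trick resolves this precisely because condition (i) guarantees that $A^{\cU}_\tau$ is top-dimensional, hence contains interior points arbitrarily close to $y$ that lie strictly inside \emph{every} $U_j$ for $j\in\tau$, supplying a single direction from $y$ along which one simultaneously exits every $U_j$ for $j\in T$.
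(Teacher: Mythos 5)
Your proof is correct, but it follows a more direct and self-contained route than the paper's. The paper factors the argument through two auxiliary lemmas: Lemma~\ref{lemma:key:closed:lemma} (boundary inclusions for closed convex covers) and Lemma~\ref{lemma55} (under non-degeneracy, $\Cl$ commutes with finite intersections of open convex sets, and $\Int$ commutes with finite unions of closed convex sets). Theorem~\ref{thm-non-degenerate} is then proved by rewriting atoms using these commutation identities together with the de~Morgan identities of Lemma~\ref{Lemma4.1}: the inclusion $\code(\V)\subseteq\code(\cU)$ comes from pushing $\Cl$ or $\Int$ inside the emptiness condition $\bigcap_{i\in\sigma}U_i\subseteq\bigcup_{j\notin\sigma}U_j$, while the inclusion $\code(\cU)\subseteq\code(\V)$ comes from the estimate $A^{\V}_\sigma\supseteq A^{\U}_\sigma\setminus\partial(\cdot)$ together with a codimension-one argument. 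Your version instead argues atom-by-atom: you use condition~(i) to produce an interior point of $A^{\U}_\sigma$ for the easy inclusion (which is cleaner than the codimension argument), you use condition~(ii) directly to approximate boundary points by a sequence inside $\bigcap_{i\in S}U_i$ in the open case, and you use a reflection $y^{**}=2y-y^*$ through $y\in\bigcap_{j\in T}\partial U_j$ in the closed case. This reflection idea is in fact the same convexity device that appears buried inside the paper's proof of Lemma~\ref{lemma:key:closed:lemma}; you have surfaced it and applied it directly to the atom rather than to the union of the $U_j$. The trade-off is that your proof is shorter and avoids the intermediate commutation lemma, whereas the paper's modular structure makes Lemma~\ref{lemma55} available for reuse elsewhere (e.g., in the proof of Proposition~\ref{T:max_code}). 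One stylistic point: in your open-case reverse inclusion you should note explicitly that $\bigcap_{i\in S}U_i\neq\varnothing$ (which follows since otherwise its boundary, and hence by condition~(ii) also $\bigcap_{i\in S}\partial U_i$, would be empty, contradicting $y$ lying in the latter); this is implicit in your appeal to a sequence $y_n\to y$ but worth making explicit.
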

\noindent The proof is given in the Appendix (Section \ref{sec:nondeg:proofs}). 
This theorem guarantees that  if an open convex code  is realizable by a  non-degenerate cover, then it is also closed convex; similarly  if a closed convex code  is realizable by a  non-degenerate cover, then it is also open convex. Non-degenerate covers are thus  {\it natural}  in  the neuroscience context, where receptive fields (i.e.\   the sets $U_i$)   should not change their code after  taking closure or interior, since such changes in code would be undetectable in the presence of standard neuronal noise.  This suggests that convex codes  that arise from non-degenerate covers should serve as the  standard model  for  convex codes in neuroscience-related contexts.  Note that  the existence of a non-degenerate convex cover realization  is {\it extrinsic}  in that it is not defined in terms of  the combinatorics of the code alone. A combinatorial description of such codes is unknown at the time of this writing.

\bigskip 
\section{Monotonicity of open convex codes.}\label{sec:Monotonicity} 
The set of all codes $\C\subseteq 2^{[n]}$ with a prescribed simplicial complex $K=\Delta(C)$  forms a poset.  
It is easy to see that if $\C$ is a convex code then its sub-codes  can be non-convex. For example any non-convex code is a  sub-code of  its simplicial complex, and  every simplicial complex is both an open and closed convex code (this follows from  Theorem \ref{t:mic} in Section \ref{sec:maxint}).  
It turns out that open convexity is a monotone increasing property. \\

\noindent {\bf Theorem~\ref{thm-monotone}.} 
Assume that a code $\C\subset 2^{[n]}$ is open convex.  Then every code $\D$ that satisfies $\C\subsetneq \D\subseteq \Delta(\C)$ is also open convex with open embedding dimension  $   \odim \D\leq \odim \C+1$.\\

Note that the above bound on  the embedding dimension is sharp. For example, the open convex code $\C=\{ 123, 12, 1 \}$ has embedding dimension $\odim \C=1$, but its simplicial complex $\D=\Delta(\C)$ has embedding dimension  $\odim \D=2$.  To prove this theorem we shall use the following lemma. Let $M(\C)$ denote the facets of the simplicial complex $\Delta(\C)$. 
\begin{lemma} \label{lem:Ball:rules} Let $\U = \{U_i\}$ be an open convex cover in $\mathbb R^d$, $d\geq 2$,  with  $\C=\code\left(\mathcal U,X\right)$. Assume that there exists an 
open Euclidean ball $B\subset \mathbb R^d $ such that $\code(\{B\cap U_i\},B\cap X) = \C$, and for every maximal set  $\alpha \in M(\C)$, its atom has non-empty intersection with the $(d-1)$-sphere: $\partial B \cap   A^\U_\alpha \neq \varnothing $. 
 Then for every $\D$ such that     $\C  \subsetneq \D\subseteq \Delta(\C) $, 
there exists an open convex cover $\V =\{V_i\}$ with $V_i\subseteq U_i$, such that $\D= \code(\mathcal V, B\cap X )$.  Moreover, if the cover $\U  $ is non-degenerate, then the cover $\V$ can also be chosen to be non-degenerate. 
\end{lemma}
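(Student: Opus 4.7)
The plan is to produce $\V$ by cutting small convex ``pockets'' out of $\U$ near $\partial B$, one pocket for each new codeword $\tau \in \D \setminus \C$. I begin with the observation that for any maximal $\alpha \in M(\C)$, the atom $A^\U_\alpha$ equals $\bigcap_{i\in\alpha} U_i$ and is therefore open and convex in $\mathbb R^d$: by maximality of $\alpha$, no codeword of $\C$ properly contains $\alpha$, so for every $j \notin \alpha$ the intersection $U_j \cap \bigcap_{i\in\alpha} U_i$ is empty, making the subtraction in the atom definition vacuous. In particular $\partial B \cap A^\U_\alpha$ is a non-empty relatively open subset of $\partial B$. For each $\tau \in \D \setminus \C$ fix a maximal extension $\alpha(\tau) \supseteq \tau$ in $M(\C)$, which exists because $\tau \in \Delta(\C)$.

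Next I construct the pockets. Pick pairwise distinct points $p_\tau \in \partial B \cap A^\U_{\alpha(\tau)}$. For each $\tau$, choose an open affine half-space $H_\tau \subset \mathbb R^d$ whose bounding hyperplane is close to the tangent plane of $\partial B$ at $p_\tau$, with $p_\tau$ on the closed complementary side; let $K_\tau := B \cap H_\tau^c$ be the resulting ``cap'' of $B$ accumulating at $p_\tau$. By pushing each hyperplane sufficiently close to tangency I arrange simultaneously that (a) $K_\tau \subseteq A^\U_{\alpha(\tau)}$ (using openness of the atom), (b) the caps $\{K_\tau\}$ are pairwise disjoint, and (c) for every $\sigma \in M(\C)$ the set $A^\U_\sigma \cap B \cap X \setminus \bigcup_\tau K_\tau$ is non-empty. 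Since each $A^\U_\sigma \cap B \cap X$ is a non-empty open subset of $\mathbb R^d$, conditions (a)--(c) are open constraints on the finitely many hyperplane parameters and can be met together. Now set
\[
V_i := U_i \cap \bigcap_{\tau \,:\, i \in \alpha(\tau)\setminus\tau} H_\tau,
\]
which is open, convex, and contained in $U_i$.

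To verify $\code(\V, B\cap X) = \D$ I compute the $\V$-codeword of an arbitrary $x \in B\cap X$. If $x$ lies in no cap then $x \in H_\tau$ for every $\tau$, so $x \in V_i \iff x \in U_i$ and the codeword agrees with its $\U$-codeword in $\C$. If $x \in K_\tau$ for the (unique) $\tau$ provided by (b), then $x \notin H_\tau$ while $x \in H_{\tau'}$ for $\tau' \neq \tau$; combined with (a), which yields $x \in U_i \iff i \in \alpha(\tau)$, this gives $x \in V_i \iff i \in \tau$, so the new codeword is exactly $\tau$. Condition (c) ensures every maximal $\sigma \in \C$ retains a witness, and non-maximal $\sigma \in \C$ are preserved automatically since $A^\U_\sigma$ is disjoint from every $A^\U_{\alpha(\tau)}$ and hence from every $K_\tau$. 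Therefore $\code(\V,B\cap X) = \C \cup (\D \setminus \C) = \D$.

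For the non-degeneracy refinement, each new atom $K_\tau$ is a full-dimensional cap of $B$, so condition (i) of Definition~\ref{dfn:nondegenerate} holds for it; each surviving old atom is the intersection of a top-dimensional set with finitely many open half-spaces and thus remains top-dimensional. To preserve condition (ii), I pick the bounding hyperplanes of the $H_\tau$ generically so that their boundaries meet the $\partial U_i$ (and each other) transversely, which is a dense open condition compatible with (a)--(c). The chief obstacle is the simultaneous bookkeeping of (a)--(c) together with the generic-position requirement: every maximal atom must retain a witness while the caps remain disjoint, contained in their designated atoms, and in general position. Since each of these is an open condition on the finite-dimensional parameter space of admissible hyperplanes, they can be satisfied at once by a shrinking and perturbation argument.
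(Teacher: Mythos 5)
Your proof is correct and rests on the same key idea as the paper's: carve small convex caps off $B$ near $\partial B$, inside the (open, convex) maximal atoms $A^\U_{\alpha(\tau)}$, one cap per missing codeword $\tau\in\D\setminus\C$, so that each cap becomes the atom of $\tau$ while the old atoms survive. The execution differs: the paper adds the missing codewords one at a time, via an auxiliary lemma that subtracts a single region $Q=P^+\cap B$ and shows exactly one codeword is added, and then iterates; to make this iteration legal it must re-verify after each cut that each maximal atom still meets $\partial B$ (hence the slightly awkward hypothesis involving $\Cl(\cdot)\cap\partial B$ being relatively open in its helper lemma). Your one-shot version places all caps simultaneously at distinct points of $\partial B$, small enough to be pairwise disjoint and to satisfy (a)--(c), and then verifies the final code directly by the case split on whether $x$ lies in a cap; this avoids the re-verification and is, if anything, slightly cleaner. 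Both treatments of the non-degeneracy refinement are comparably informal (a generic-position/perturbation appeal for condition (ii) of Definition~\ref{dfn:nondegenerate}). One small caveat: your $V_i=U_i\cap\bigcap H_\tau$ are not intersected with $B$, whereas the paper's are; this is harmless for $\code(\V,B\cap X)$, but since non-degeneracy of $\V$ is a statement about the cover in all of $\R^d$, it is cleanest to also intersect with $B$ as the paper does (or to note that $B$ itself is open, convex, and in general position with everything else).
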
 
The proof of this lemma is given in Section \ref{apendix:monotone}.  Intuitively, the reason why this lemma holds is that one can ``chip away'' small pieces from  the ball $B$ inside some  atoms $A^\U_\alpha$ to uncover only the atoms corresponding to the codewords in  $\D\setminus \C$.   
\begin{proof}[Proof of Theorem \ref{thm-monotone}] 
Assume that $\U$ is an open convex cover  in $\R^d$ with $\C=\code(\U,X)$. Since there are only finitely many codewords, there exists a radius $r>0$ and  an open Euclidean ball $B^d_r \subset \mathbb R^d$, of radius $r$,  centered at the origin,  that  satisfy $\code(\{B^d_r\cap U_i\},B^d_r   \cap X) = \C$.  Let $\pi: \R^{d+1} \to \R^d$ be the standard projection. Let $B\od B^{d+1}_r$ denote the open ball in $\mathbb R^{d+1}$, centered at the origin and  of the same radius $r$.  Define $\tilde U_i = \pi^{-1}(U_i)$. By construction,  $\tilde{\U} =\{\tilde U_i \}$ is an open, convex cover, such that each of its atoms has non-empty intersection with the sphere $\partial B$.  
Moreover, $\code(\{B\cap \tilde U_i\},B\cap \pi^{-1}(X)) = \C$. 
Thus the conditions of Lemma \ref{lem:Ball:rules}  are satisfied for  the cover $\tilde \U$, and $\D$ is an open convex code with $   \odim \D\leq \odim \C+1$.
\end{proof}

Note that the proof of Lemma \ref{lem:Ball:rules} (see Section \ref{apendix:monotone}) breaks down if one assumes that the convex sets $U_i$ are closed. Moreover,  it is currently not known if the monotonicity property  holds in the setting of the closed convex codes. The differences between the open convex and the closed convex codes (described in the previous section)  leave enough room for either possibility. 

\bigskip 
\section{Max intersection-complete codes are open and closed convex.} \label{sec:maxint}
Here we introduce max intersection-complete codes and prove that they are open convex and closed convex. The open convexity of max 
intersection-complete codes was first hypothesized in \cite{Carina2015}.

\begin{definition} 
The {\it intersection completion of a code $\cC$} is the code that consists of all non-empty intersections of codewords in $\cC$: 
\begin{equation*}  \widehat{\cC} = \{ \sigma \;|\; \sigma = \bigcap_{\nu \in  \C^\prime}\nu \text{ for some non-empty subcode } \C^\prime   \subseteq \C\}.\end{equation*} 
\end{definition}
\noindent Note that the intersection completion satisfies $\cC\subseteq \widehat{\cC}\subseteq \Delta(\cC)$.

\begin{definition} Let $\C \subset 2^{[n]}$ be a code,
and denote by $M(\C) \subset \C$ the subcode consisting of all maximal codewords\footnote{Equavalently, the facets of $\Delta(\C)$.} of $\C$. A code $\cC$ is said to be \\

\begin{itemize}
\item {\it intersection-complete}  if $ \widehat{\cC}=\cC$; \\
\item  {\it max intersection-complete} if  $ \widehat{M(\cC)}\subseteq \cC$.
\end{itemize}
\end{definition}

\medskip 
\noindent Note that any simplicial complex (i.e.\   $\cC=\Delta(\cC)$) is intersection-complete and  any intersection-complete code  is  max intersection-complete. 
Intersection-complete codes allow a  simple construction of a closed convex realization that we describe  in  Section \ref{apendix:IC}  (see Lemma \ref{l:pot_ic}). However,  in order to prove that max intersection-complete codes are both open and closed convex, we need the following. 

\begin{proposition}\label{T:max_code}  Let $\C \subset 2^{[n]}$ be a code with $k$ maximal elements. 
Then  there exists an open convex and non-degenerate cover $\cU $ in $d=(k-1)$-dimensional space whose code is the intersection completion of the maximal elements in $\C$:  $ \code(\cU ,\mathbb R^{d})=\widehat{M(\C)}$. 
\end{proposition}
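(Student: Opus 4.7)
The plan is to construct the cover $\U$ explicitly using a hyperplane arrangement in $\R^{k-1}$ indexed by the $k$ maximal codewords. Enumerate $M(\C)=\{\mu_1,\dots,\mu_k\}$ and for each $i\in[n]$ set $S_i\od\{l\in[k]:i\in\mu_l\}$; we may assume $S_i\neq\varnothing$, since any neuron with $S_i=\varnothing$ lies in no codeword and can be discarded. Place $v_1,\dots,v_k\in\R^{k-1}$ at the vertices of a regular $(k-1)$-simplex centered at the origin, so that $\sum_l v_l=0$ and the Gram relations $\langle v_l,v_{l'}\rangle=\|v_1\|^2$ if $l=l'$, $-\|v_1\|^2/(k-1)$ otherwise, hold. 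Fix a constant $c<0$, let $H_l\od\{x\in\R^{k-1}:\langle x,v_l\rangle<c\}$, and define
\[
U_i\od\bigcap_{l\in[k]\setminus S_i} H_l,
\]
with the convention $U_i=\R^{k-1}$ when $S_i=[k]$. Each $U_i$ is an intersection of open half-spaces and is therefore open and convex.

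To compute the code, attach to each $x\in\R^{k-1}$ the set $T(x)\od\{l:\langle x,v_l\rangle\geq c\}$. The definition of $U_i$ gives $x\in U_i\iff T(x)\subseteq S_i$, so the codeword at $x$ is
\[
\sigma(x)=\{i:T(x)\subseteq S_i\}=\bigcap_{l\in T(x)}\mu_l.
\]
The centering identity $\sum_l\langle x,v_l\rangle=0$ together with $c<0$ rules out $T(x)=\varnothing$ (otherwise $\sum_l\langle x,v_l\rangle<kc<0$, contradiction), so $\sigma(x)\in\widehat{M(\C)}$ for every $x$ and $\code(\U,\R^{k-1})\subseteq\widehat{M(\C)}$. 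Conversely, for any non-empty $T\subseteq[k]$ take $x_T=0$ if $T=[k]$ and $x_T=R\sum_{l\in T}v_l$ for sufficiently large $R>0$ if $T\subsetneq[k]$. The Gram relations yield $\langle x_T,v_{l'}\rangle=R\|v_1\|^2(k-|T|)/(k-1)>0$ when $l'\in T$ (and $T\neq[k]$), and $\langle x_T,v_{l'}\rangle=-R|T|\|v_1\|^2/(k-1)$ when $l'\notin T$; the latter is less than $c$ once $R$ is large. Hence $T(x_T)=T$, every element of $\widehat{M(\C)}$ is realized, and $\code(\U,\R^{k-1})=\widehat{M(\C)}$.

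It remains to verify that $\U$ is non-degenerate. I will check condition (ii) of Definition~\ref{dfn:nondegenerate}, whence (i) follows by Lemma~\ref{lemma:nondeg}. For non-empty $\sigma\in\Delta(\code(\U,\R^{k-1}))$ set $S_\sigma\od\bigcap_{i\in\sigma}S_i$, so that $\bigcap_{i\in\sigma}U_i=\bigcap_{l\notin S_\sigma}H_l$. Suppose $x\in\bigcap_{i\in\sigma}\partial U_i$ (if any $i\in\sigma$ has $S_i=[k]$ then $\partial U_i=\varnothing$ and the intersection is empty, which is trivially contained in any set). Then for each $i\in\sigma$ some $l_i\in[k]\setminus S_i$ satisfies $\langle x,v_{l_i}\rangle=c$; since $l_i\notin S_i$ implies $l_i\notin S_\sigma$, we see that $x$ lies on $\partial H_{l_i}$ and satisfies $\langle x,v_l\rangle\leq c$ for every $l\notin S_\sigma$, so $x\in\partial(\bigcap_{i\in\sigma}U_i)$, confirming (ii). The main obstacle in the argument is the combinatorial bookkeeping: one must simultaneously produce explicit witnesses $x_T$ for \emph{every} non-empty $T\subseteq[k]$ (so that every element of $\widehat{M(\C)}$ appears as a codeword) and rule out spurious codewords arising from $T(x)=\varnothing$; both are handled cleanly by pairing the centering condition $\sum_l v_l=0$ with the choice $c<0$.
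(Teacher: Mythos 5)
Your construction is essentially the paper's: both place the $k$ facet hyperplanes of a regular simplex in $\R^{k-1}$ and set $U_i = \bigcap_{l\notin S_i}H_l$, the intersection of the open half-spaces indexed by the maximal codewords that do \emph{not} contain $i$; your $T(x)$ bookkeeping is the paper's chamber decomposition $\{H_\rho\}$ in different clothing (the chamber $H_\rho$ is exactly $\{x : T(x)=\rho\}$), and the explicit witnesses $x_T$ together with the centering identity $\sum_l v_l=0$ cleanly replace the paper's enumeration of chambers. The code computation is correct.

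The non-degeneracy verification, however, has a gap. From $\langle x,v_l\rangle\le c$ for all $l\notin S_\sigma$, i.e.\ $x\in\bigcap_{l\notin S_\sigma}\Cl(H_l)$, you conclude $x\in\partial\bigl(\bigcap_{l\notin S_\sigma}H_l\bigr)$. This requires $x\in\Cl\bigl(\bigcap_{l\notin S_\sigma}H_l\bigr)$, and in general one only has $\Cl\bigl(\bigcap_{l\notin S_\sigma}H_l\bigr)\subseteq\bigcap_{l\notin S_\sigma}\Cl(H_l)$. The reverse inclusion does hold here, but because $\bigcap_{l\notin S_\sigma}H_l$ is a \emph{non-empty} open convex set (take $z$ in it; the open segment from $x$ toward $z$ immediately enters the set), and you should say so. Relatedly, you restrict to $\sigma\in\Delta\bigl(\code(\U,\R^{k-1})\bigr)$, while Definition~\ref{dfn:nondegenerate}(ii) quantifies over all non-empty $\sigma\subseteq[n]$. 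For $\sigma\notin\Delta$ one has $\bigcap_{l\notin S_\sigma}H_l=\varnothing$; summing the defining inequalities against $\sum_l v_l=0$ forces $S_\sigma=\varnothing$, and the same summation then shows $\bigcap_{l\in[k]}\Cl(H_l)=\varnothing$, so $\bigcap_{i\in\sigma}\partial U_i=\varnothing$ and (ii) holds vacuously --- but this case needs to be dispatched. The paper sidesteps both points by first showing the half-space cover is in general position, hence non-degenerate (Lemma~\ref{lemma:general:postiotion}), and then invoking Lemma~\ref{lemma55} for the closure identity before deriving condition~(ii).
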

\begin{figure}[h] 
\centering
\definecolor{qqccqq}{rgb}{0,0.7,0}
\definecolor{qqqqzz}{rgb}{0,0,0.6}
\definecolor{ffqqqq}{rgb}{1,0,0}
\definecolor{ttfftt}{rgb}{0.2,1,0.2}
\definecolor{qqqqff}{rgb}{0,0,1}
\definecolor{ffqqtt}{rgb}{1,0,0.2}
\definecolor{uququq}{rgb}{0.25,0.25,0.25}
\definecolor{zzttqq}{rgb}{0.6,0.2,0}
\definecolor{qqzzzz}{rgb}{0,0.6,0.6}
\begin{center}
\begin{tikzpicture}[line cap=round,line join=round,>=triangle 45,x=1.4cm,y=1.4cm]

\draw [dash pattern = on 1pt off 1pt] (0,0) circle (1.5);
\fill [fill=black,fill opacity=0.3] (0,1.5) circle (0.1);
\fill [fill=black,fill opacity=0.3] ({1.5*cos(-30)},{1.5*sin(-30)}) circle (0.1);
\fill [fill=black,fill opacity=0.3] ({-1.5*cos(-30)},{1.5*sin(-30)}) circle (0.1);

 \draw [qqqqff,dash pattern=on 3pt off 3pt, fill=qqqqff, domain=-17.5:197.5, fill opacity = 0.25] plot ({2.5*cos(\x)}, {2.5*sin(\x)});
\draw [qqqqff, dash pattern=on 3pt off 3pt] ({2.5*cos(-17.5)}, {2.5*sin(-17.5)}) -- ({2.5*cos(197.5)}, {2.5*sin(197.5)});
 \draw [ffqqqq,dash pattern=on 3pt off 3pt, fill=ffqqqq, domain=102.5:317.5, fill opacity = 0.25] plot ({2.5*cos(\x)}, {2.5*sin(\x)});
\draw [ffqqqq, dash pattern=on 3pt off 3pt] ({2.5*cos(102.5)}, {2.5*sin(102.5)}) -- ({2.5*cos(317.5)}, {2.5*sin(317.5)});
 \draw [ttfftt,dash pattern=on 3pt off 3pt, fill=ttfftt, domain=222.5:437.5, fill opacity = 0.25] plot ({2.5*cos(\x)}, {2.5*sin(\x)});
\draw [ttfftt, dash pattern=on 3pt off 3pt] ({2.5*cos(222.5)}, {2.5*sin(222.5)}) -- ({2.5*cos(437.5)}, {2.5*sin(437.5)});
\begin{scriptsize}
\draw[color=black] (-0.15,1.65) node {$1$};
\draw[color=black] (-1.45,-0.6) node {$2$};
\draw[color=black] (1.15,-0.6) node {$3$};
\end{scriptsize}
\begin{large}
\draw[color=black] (0,2.2) node {$H_{\{1\}}$};
\draw[color=black] (-1.9053,-1.1) node {$H_{\{2\}}$};
\draw[color=black] (1.9053,-1.1) node {$H_{\{3\}}$};
\draw[color=black] (-1.299,0.75) node {$H_{\{1,2\}}$};
\draw[color=black] (1.299,0.75) node {$H_{\{1,3\}}$};
\draw[color=black] (0,-1.5) node {$H_{\{2,3\}}$};
\draw[color=black] (0,0) node {$H_{\{1,2,3\}}$};
\end{large}
\end{tikzpicture}
\end{center}
\caption{The oriented hyperplane arrangement $\{P_a\}$ and its chambers $H_\rho$. }
\label{fig-cartoon}
\end{figure}
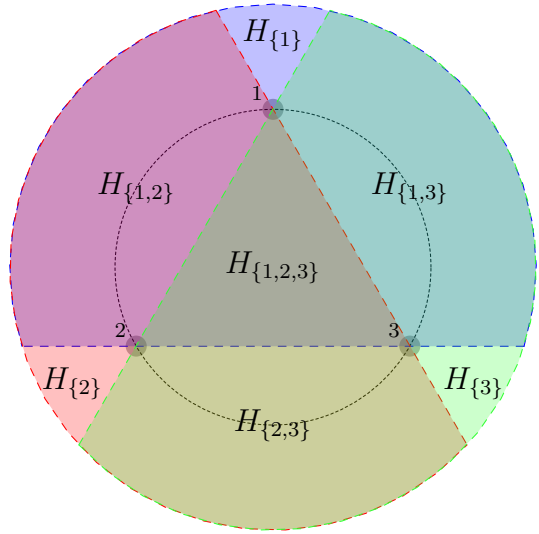

\begin{proof} Denote the maximal codewords as $M(\C) = \{\sigma_1, \sigma_2, \dots \sigma_k\}$. 
If $k=1$ this statement is trivially true. Assume $k\geq 2$ and consider a regular geometric $(k-1)$-simplex $\Delta^{k-1}$ in $\R^{k-1}$ constructed by evenly spacing vertices $[k]$ on the unit sphere  $S^{k-2} \subseteq \R^{k-1}$. Construct  a collection of  hyperplanes $\{P_a\}_{a=1}^k$ in $\R^{k-1}$ by taking $P_a$ to be the plane through the facet of $\partial \Delta^{k-1}$ which does not contain vertex $a$. Denote by  $H_a^+$  the {\it closed} half-space containing the vertex $a$   bounded by $P_a$ and by $H_a^-$   the complementary {\it open} half-space. Observe that this arrangement splits $\R^{k-1}$ into $2^{k}-1$ disjoint, non-empty, convex chambers
$$H_\rho =  \bigcap_{a \in \rho} H_a^+ \cap \bigcap_{b \not\in\rho}H_b^-,$$
indexed by all non-empty\footnote{The empty set is not included because under this definition, $H_\varnothing = \varnothing$.} subsets $\rho \subseteq {[k]} $. \\

For every $i\in [n]$ consider $\rho(i) \od \{a\in [k] \,\vert\, \sigma_a\ni i \} \subset[k]$, 
i.e.\   the collection of indices of the maximal codewords  $\sigma_a$ that contain $i$,
 and construct a collection of convex open sets $\U = \{U_i\}$ 
\begin{equation*}\label{eq:defUi}
U_i \od \coprod_{\rho\subseteq\rho(i)} H_\rho.
\end{equation*}
To show that the sets $U_i$ are convex and open, observe that the above  construction implies that we have the disjoint unions
$$\mathbb R^{k-1}=\coprod_{\rho\not=\varnothing}\, H_\rho \quad \text{ and } \quad H_b^+= \coprod_{\rho\ni b }\, H_\rho,$$ 
thus 
$$ \mathbb R^{k-1}\setminus U_i=
\left( \coprod_{\rho\neq\varnothing} H_\rho \right) \setminus \left(\coprod_{\rho\subseteq\rho(i)} H_\rho\right)=
\coprod_{\rho\not\subseteq\rho(i)} H_\rho =
\bigcup_{b\not\in \rho(i)}\left( \coprod_{\rho\ni b} H_\rho \right)=
\bigcup_{b\not\in \rho(i)}H_b^+ .
$$ 
Therefore, by de Morgan's Law,
\begin{equation}\label{eq:Ui} U_i= \mathbb R^{k-1}\setminus \left(\bigcup_{b\not\in \rho(i)}H_b^+ \right)=\bigcap_{b\not\in \rho(i)} H_b^-  .
\end{equation} 
This is an intersection of open convex sets, and therefore open and convex. Note that if $\rho(i) = [k]$, this is an intersection over an empty index, and we interpret this set as all of $\R^{k-1}$.\\

To show that $\code(\mathcal U,\mathbb R^{k-1})=  \widehat{M(\cC)}$, observe that because 
the chambers of the hyperplane arrangement satisfy 
 $H_\rho\cap H_{\nu}\neq \varnothing $ iff $\rho = \nu$, the atoms of the cover $\{U_i\}$ take the form 

\begin{align*} A^{\U}_\sigma=\bigcap_{i\in \sigma} U_i\setminus \left(\bigcup_{j\not\in \sigma}U_j \right)=
\left( 
\bigcup_{\rho\in \cap_{i\in \sigma}R_i}H_\rho
\right) \setminus 
\left(\bigcup _{\nu \in \cup_{j\not\in \sigma} R_j }H_\nu \right), 
\end{align*} 
 where each $R_i \od \{\rho \subseteq \rho(i) \}\subseteq 2^{[k]}\setminus \varnothing $ is the collection of the non-empty subsets of $\rho(i)$, 
and therefore 
\begin{equation*} 
 \code(\{U_i\} ,\mathbb R^{k-1})=\code(\{R_i\},2^{[k]}\setminus \varnothing ).
\end{equation*}
Now, observe that 
$$ \rho\in \bigcap_{i\in \sigma}R_i \iff \forall i \in \sigma,\, \rho\subseteq \rho(i) 
\iff \forall i \in \sigma,\, \forall a\in \rho, \, i\in \sigma_a
\iff \sigma\subseteq \bigcap _{a\in \rho} \sigma_a,
$$
and also that, 
$$ \rho\not \in \bigcup_{j\not\in \sigma} R_j \iff \forall j\not \in \sigma, \rho \not \subseteq \rho(i) \iff \forall j \not \in \sigma, \exists\, a\in \rho\text{ such that }j \not \in \sigma_a \iff \sigma \supseteq \bigcap_{a\in \rho} \sigma_a
$$
Therefore, 
$ \rho\in \bigcap_{i\in \sigma}R_i \setminus \left(\bigcup_{j\not\in \sigma} R_j \right)$ if and only if $ \sigma=\bigcap _{a\in \rho} \sigma_a$ and thus 
$$ \widehat{M(\cC)} =\code(\{R_i\},2^{[k]}\setminus \varnothing )= \code(\{U_i\} ,\mathbb R^{k-1}).$$

Lastly, we show that the cover $\U$ is non-degenerate. 
 By construction,  the half-spaces $H_a^-$ are open, convex and in general position. Thus  Lemma \ref{lemma:general:postiotion} guarantees that the cover $\mathcal{H}=\{H_a^-\}$ is non-degenerate and using Lemma \ref{lemma55} in the Appendix we conclude that for any non-empty $ \tau\subseteq [k]$, 
$\bigcap_{a\in \tau} \Cl(H_a^-) = \Cl\left(\bigcap_{a\in \tau} H_a^- \right)$. For any non-empty subset $\sigma\subseteq [n]$ we can   combine this with the equality \eqref{eq:Ui}  to obtain  
\begin{equation*}\label{eq:closurecommuteswithintersections} 
\Cl(\bigcap_{i\in \sigma}U_i) = 
\Cl(\bigcap_{i\in \sigma} \bigcap_{a\not \in \rho(i)} H_a^-) = 
\bigcap_{i\in \sigma} \bigcap_{a\not\in \rho(i)} \Cl(H_a^-) = 
\bigcap_{i\in \sigma} \Cl(\bigcap_{a\not\in \rho(i)} H_a^-) = 
\bigcap_{i\in \sigma} \Cl(U_i).
\end{equation*} 
Since $U_i$ are open we obtain 
\begin{align*}
\bigcap_{i\in \sigma} \partial U_i & = \bigcap_{i\in \sigma} \left( \Cl(U_i) \setminus U_i\right) 
 \subseteq \bigcap_{i\in \sigma} \left( \Cl(U_i) \setminus \bigcap_{i\in \sigma} U_i\right)
 = \left(\bigcap_{i\in \sigma} \Cl(U_i)\right) \setminus \bigcap_{i\in\sigma} U_i\\
 &= \Cl(\bigcap_{i\in\sigma} U_i) \setminus \bigcap_{i\in \sigma} U_i
 = \partial(\bigcap_{i\in \sigma} U_i).
\end{align*}
Therefore by Lemma \ref{lemma:nondeg}    
the  open and convex cover $\U$ is also non-degenerate. 
\end{proof}

\noindent As a corollary we obtain the main result of this section: \\

\noindent {\bf Theorem~\ref{t:mic}.} 
 Suppose $\C \subset 2^{[n]}$ is a max intersection-complete code. Then $\C$ is both open convex and closed convex with the  embedding dimension $d\leq \max\{2,(k-1)\}$, where $k$ is the number of facets of the complex $\Delta(\C)$. 

\begin{proof}
Note that the case of $k=1$, i.e.\   $M(\C)=\{[n]\}$, was  proved in \cite{Carina2015}. We first consider the case when the number of maximal codewords is $k \geq 3$ and begin by constructing convex regions $\{H_\rho\}_{\rho \in 2^{[k]}\setminus\varnothing}$ and the open convex cover $\{U_i\}_{i=1}^n$ as in the proof of Proposition  \ref{T:max_code} (see Figure \ref{fig-cartoon}). In this cover,  every atom that corresponds to a maximal codeword is unbounded, therefore  we can apply Lemma 3.2 using the open ball of radius 2 centered at the origin. This yields an open convex and non-degenerate cover, thus by  Theorem \ref{thm-non-degenerate} the code $\C$ is both open convex and closed convex.\\

If $1\leq k < 3$, we formally append $3-k$ empty maximal codewords $\{\gamma_j\}_{j=1}^{3-k}$ to $M(\C)$ and apply the same construction. Because the $\gamma_i$ are empty, they serve only to ``lift'' the construction to $\R^{2}$. The sets $U_i$ are contained entirely in $\bigcap_{j=1}^{3-k}H_{\gamma_j}^-$, but the $\gamma_i$ have no other effect on their composition. This allows us to carry out the rest of the above proof in the same way as in the case of $k\geq 3$. 
\end{proof}

 \bigskip 
\section{Appendix: supporting proofs}\label{the:appendix} 
\subsection{Proof of Lemma \ref{lemma:example:nonconvex}} 
\label{subsec5.1}
 \begin{proof} Consider the code $\C$ in   \eqref{eq:convexcounterexample} and assume that there exists a closed convex cover $\mathcal U =\{U_i\}$ in $\R^d$, with $\code(\mathcal U,\mathbb R^d )=\C$.
   Without loss of generality, we can assume that the $U_i$ are compact\footnote{If $U_i$ are not compact, then one can intersect them with a closed ball of large enough radius to obtain the same code.}.   Because $U_i$ are compact and convex one can pick points $x_{123}$, $x_{345}$, and $x_{156}$ in the closed atoms $A^{\U}_{123}$, $A^{\U}_{345}$ and $A^{\U}_{156}$ respectively so that for every  $a \in A^{\U}_{123}$, its distance to the closed line segment  $M = \overline{x_{345}x_{156}}$ satisfies\footnote{Because, $U_5$ is convex and contains the endpoints of $M$, $x_{123}\not\in M$. Moreover, since both $M$ and $A^{\U}_{123}$ are compact, the function $f(a)=\operatorname{dist}(a,M)$ achieves its minimum on $A^{\U}_{123}$.}
 $ \operatorname{dist}(a,M) \geq \operatorname{dist}( x_{123}, M) \ne 0$, i.e.\   $x_{123}$ minimizes the distance to the line segment $M$. 
Moreover, the points $x_{123},x_{156},x_{345}$ cannot be collinear. For the rest of this proof we will consider only the  convex hull of these three points (Figure \ref{fig-cartoon-proof-ofopen-but-not-closed}).

\newcommand{\n}{100}
\begin{figure}[h] 
\centering
\begin{tikzpicture}
\fill[black] (\n pt,0pt) circle (3pt) ; 
\fill[black] (-\n pt,0pt) circle (3pt) ; 
\fill[black] (0pt,\n pt) circle (3pt) ; 
\draw (-\n pt,0) -- (0,\n pt) -- (\n pt, 0);
\node [label={[xshift=\n+20 pt, yshift = -20pt]\textbf{$x_{156}$}}]{};
\node [label={[xshift=-20-\n pt, yshift = -20pt]\textbf{$x_{345}$}}]{};
\node [label={[xshift=0pt, yshift = \eval{\n } pt]\textbf{$x_{123}$}}]{};
\fill[black] (0.5*\n pt,0.5*\n pt) circle (3pt) ; 
\fill[black] (-0.3*\n pt,0.7*\n pt) circle (3pt) ; 
\draw (0.5*\n pt,0.5*\n pt)-- (-0.3*\n pt,0.7*\n pt);
\node [label={[xshift=0.5*\n+20 pt, yshift = 0.5*\n -16 pt]\textbf{$x_{126}$}}]{};
\node [label={[xshift=-0.3*\n-21 pt, yshift = 0.7*\n - 15 pt]\textbf{$x_{234}$}}]{};
\draw[line width=0.8mm] (\n pt, 0) -- (-\n pt,0);
\node [label={[xshift=0 pt, yshift = -25 pt]\textbf{M}}]{};
\fill[black] (0.1*\n pt,0.6*\n pt) circle (3pt) ; 
\node [label={[xshift=15 pt, yshift = 0.5*\n-16 pt]\textbf{$y_{123}$}}]{};
\end{tikzpicture}
 \caption{}
 \label{fig-cartoon-proof-ofopen-but-not-closed}
\end{figure}
Consider  the closed line segment $L = \overline{x_{123}x_{156}}$. Because $U_1$ is convex, $L \subset U_1$, therefore the  code \eqref{eq:convexcounterexample} of the cover imposes that 
$$L\subset A^{\U}_{123} \sqcup A^{\U}_{12} \sqcup A^{\U}_{126} \sqcup  A^{\U}_{16} \sqcup A^{\U}_{156}.$$ 
Because each of the atoms above is contained  in either $ U_2$ or $U_6$,  $L\subset U_2 \cup U_6$. Since $L$ is connected and the sets $U_2\cap L$ and $U_6\cap L$ are closed  and non-empty,  we conclude  that  $U_2 \cap U_6 \cap L\subset A^{\U}_{126}$ must be non-empty, thus there exists a point $x_{126} \in A^{\U}_{126} \cap L$ that lies  in the interior of $L$. By the same argument, there also exist points
\begin{align*}
x_{234} \in A^{\U}_{234} &\text{ in the interior of }\overline{x_{123}x_{345}} \subset U_3, \text{ covered by }U_2\text{ and }U_4.\\
y_{123}\in A^{\U}_{123} &\text{  in the interior of } \overline{x_{234}x_{126}} \subset U_2,\text{ covered by }U_1\text{ and }U_3.
\end{align*}
and these points must lie on the interiors of their respective line segments (Figure \ref{fig-cartoon-proof-ofopen-but-not-closed}).\\

Finally we observe that  because the point $y_{123}\in A^{\U}_{123}$  lies in the interior of a line segment  $ \overline{x_{234}x_{126}}$, it also lies  in the interior of the closed triangle $\triangle(x_{123},x_{156},x_{345})$,  and thus  $d(y_{123},M) < d(x_{123},M)$. This yields a contradiction, since we chose $x_{123}\in A^{\U}_{123}$ to have the minimal distance to the line segment $M$.
 \end{proof}
 
 \medskip 
 \subsection{Proofs of lemmas, related to the non-degeneracy condition.}
 \label{sec:nondeg:condition:lemmas}
We shall need the following several   lemmas.  The following lemma is well-known (see e.g. \cite{HatcherNotes}, exercises in Chapter 1), nevertheless we give its proof for the sake of completeness.

\begin{lemma}\label{Lemma4.1} For any finite  cover $\mathcal{U} = \{U_i\}_{i=1}^n$ and a  subset $\sigma\subseteq  [n]$, the following  hold:
\begin{align} 
\label{eq:4l}\Cl(\bigcup_{i\in \sigma} U_i) &= \bigcup_{i\in \sigma} \Cl(U_i),\\
 \label{eq:3half} \Cl(\bigcap_{i\in \sigma} U_i) &\subseteq \bigcap_{i\in \sigma} \Cl(U_i), \\
\label{eq:1l}\Int(\bigcap_{i\in \sigma} U_i) &= \bigcap_{i\in \sigma} \Int(U_i), \\
\label{eq:1half}\Int(\bigcup_{i\in \sigma} U_i) &\supseteq \bigcup_{i\in \sigma} \Int(U_i). 
\end{align} 
\end{lemma}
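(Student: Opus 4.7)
All four containments are standard point-set topology, and the only substantive use of finiteness of $\sigma$ will be in the two equalities \eqref{eq:4l} and \eqref{eq:1l}; the two one-sided inclusions \eqref{eq:3half} and \eqref{eq:1half} hold for arbitrary index sets. My plan is to prove the four statements in the order (2), (4), (1), (3), deducing each equality from its corresponding inclusion by appealing to the fact that $\bigcup_{i\in\sigma}\Cl(U_i)$ is closed and $\bigcap_{i\in\sigma}\Int(U_i)$ is open when $\sigma$ is finite.

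For \eqref{eq:3half}, observe that for each $j\in\sigma$ we have $\bigcap_{i\in\sigma}U_i\subseteq U_j\subseteq\Cl(U_j)$, so taking closures gives $\Cl(\bigcap_{i\in\sigma}U_i)\subseteq\Cl(U_j)$ since $\Cl(U_j)$ is closed. Intersecting over $j\in\sigma$ yields the inclusion. The argument for \eqref{eq:1half} is dual: for each $j$, $\Int(U_j)\subseteq U_j\subseteq\bigcup_{i\in\sigma}U_i$, and $\Int(U_j)$ is open, so $\Int(U_j)\subseteq\Int(\bigcup_{i\in\sigma}U_i)$; taking the union over $j$ gives the containment.

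For \eqref{eq:4l}, the inclusion $\supseteq$ follows exactly as in \eqref{eq:3half} with roles swapped: $U_j\subseteq\bigcup_{i\in\sigma}U_i$ implies $\Cl(U_j)\subseteq\Cl(\bigcup_{i\in\sigma}U_i)$, and we take the union over $j\in\sigma$. For the reverse inclusion, since $\sigma$ is finite, $\bigcup_{i\in\sigma}\Cl(U_i)$ is a finite union of closed sets and hence closed; because it contains $\bigcup_{i\in\sigma}U_i$, it must also contain its closure. Dually for \eqref{eq:1l}: $\bigcap_{i\in\sigma}\Int(U_i)$ is a finite intersection of open sets and hence open; since it is contained in $\bigcap_{i\in\sigma}U_i$, it is contained in the interior of that intersection, giving $\supseteq$, while the reverse $\subseteq$ follows from applying \eqref{eq:1half}-type reasoning coordinate-by-coordinate, namely $\Int(\bigcap_{i\in\sigma}U_i)\subseteq\bigcap_{i\in\sigma}U_i\subseteq U_j$ and openness forces $\Int(\bigcap_{i\in\sigma}U_i)\subseteq\Int(U_j)$.

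There is no real obstacle here; the only thing to watch is that \eqref{eq:4l} and \eqref{eq:1l} genuinely require finiteness of $\sigma$ (an infinite intersection of open sets need not be open, and an infinite union of closed sets need not be closed), which is exactly the hypothesis of the lemma. The proof can be written very compactly in four short paragraphs, one per statement, with no calculation beyond the monotonicity of $\Cl$ and $\Int$.
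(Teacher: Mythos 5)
Your proposal is correct and follows essentially the same route as the paper: the one-sided inclusions via monotonicity of $\Cl$ and $\Int$, and the two equalities by noting that a finite union of closed sets is closed and a finite intersection of open sets is open (the paper phrases the nontrivial direction of \eqref{eq:4l} as $\Cl(\bigcup U_i)\subseteq\Cl(\bigcup\Cl(U_i))=\bigcup\Cl(U_i)$, which is the same observation you make by saying a closed superset of $\bigcup U_i$ must contain its closure). Your remark that \eqref{eq:3half} and \eqref{eq:1half} need no finiteness while \eqref{eq:4l} and \eqref{eq:1l} do is also accurate.
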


\begin{proof} 
Observe that since $U_i\subseteq \Cl(U_i)$, we have 
$\bigcup_{i\in \sigma}  U_i \subseteq \bigcup_{i\in \sigma} \Cl(U_i)$ and thus 

\begin{equation} \label{eq:4half}
\Cl(\bigcup_{i\in\sigma} U_i) \subseteq \Cl\left(\bigcup_{i\in \sigma} \Cl(U_i)\right) = \bigcup_{i\in \sigma} \Cl(U_i).
\end{equation}
Similarly,  we find the inclusion \eqref{eq:3half}. Using $U_i\supseteq \Int(U_i)$, one also obtains the inclusion \eqref{eq:1half} and the inclusion 
\begin{align} 
 \label{eq:inclusion:int:cap}
 \Int(\bigcap_{i\in \sigma} U_i) &\supseteq  \bigcap_{i\in \sigma} \Int(U_i).
\end{align} 
Observe  that for any  $j\in \sigma$, 
$\Cl(U_j) \subseteq \Cl(\bigcup_{i\in \sigma} U_i) $  and $\Int(U_j) \supseteq \Int(\bigcap_{i\in \sigma} U_i) $, 
thus we obtain  $\bigcup_{i\in \sigma} \Cl(U_i) \subseteq \Cl(\bigcup_{i\in \sigma} U_i)$ and $\bigcap_{i\in \sigma} \Int(U_i) \supseteq \Int(\bigcap_{i\in \sigma} U_i)$. 
These combined with  \eqref{eq:4half} and \eqref{eq:inclusion:int:cap} yield    \eqref{eq:4l} and  \eqref{eq:1l} respectively.  
\end{proof}

\medskip

\begin{lemma}\label{L:open_nondegen}
  Suppose $\mathcal{U}=\{U_i\}_{i\in [n]}$ is an open and convex cover such that  for every non-empty subset $\tau \subseteq [n]$, $\bigcap_{i \in \tau}\partial U_i \subseteq \partial(\bigcap_{i\in \tau}U_i)$.
  Then every atom of $\mathcal{U}$ is top-dimensional.
  \end{lemma}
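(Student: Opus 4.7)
The plan is to fix an open set $B$ meeting the atom $A^{\U}_\sigma$ at a point $x$ and produce an open subset of $A^{\U}_\sigma \cap B$. Since each $U_i$ is open and $x \in \bigcap_{i \in \sigma} U_i$, first shrink $B$ so that $B \subseteq \bigcap_{i \in \sigma} U_i$. Let $\tau = \{j \notin \sigma : x \in \partial U_j\}$; for every $j \notin \sigma \cup \tau$ we have $x \notin \Cl(U_j)$, so $B$ can be shrunk further to be disjoint from each such $U_j$. The problem reduces to showing that $B \setminus \bigcup_{j \in \tau} U_j$ has non-empty interior; when $\tau = \varnothing$ this is immediate, so assume $\tau \neq \varnothing$. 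By hypothesis $x \in \bigcap_{j \in \tau} \partial U_j \subseteq \partial(\bigcap_{j \in \tau} U_j)$, and since the latter is the boundary of an open set, this forces $\bigcap_{j \in \tau} U_j$ to be non-empty with $x$ in its closure. Pick $y \in (\bigcap_{j \in \tau} U_j) \cap B$ with $y \neq x$.

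For each $j \in \tau$, the set $U_j$ is open and convex with $x \in \partial U_j$, so there is a supporting hyperplane at $x$ with normal $n_j$ such that $U_j \subseteq \{w : \langle w - x, n_j \rangle > 0\}$, with strict inequality because an open convex set cannot meet its supporting hyperplane. Consequently $\langle y - x, n_j \rangle > 0$ for every $j \in \tau$. Now reflect $y$ across $x$: set $z = x - s(y - x)$ for a small $s > 0$. Then $\langle z - x, n_j \rangle = -s \langle y - x, n_j \rangle < 0$, so $z$ lies in the open half-space $\{w : \langle w - x, n_j \rangle < 0\}$, which is disjoint from $U_j$. Choosing a sufficiently small open ball $N$ around $z$ that is contained in $B$ and in each of the finitely many half-spaces $\{\langle w - x, n_j \rangle < 0\}$ for $j \in \tau$, we obtain $N \subseteq B \subseteq \bigcap_{i \in \sigma} U_i$ with $N \cap U_j = \varnothing$ for every $j \notin \sigma$. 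Hence $N \subseteq A^{\U}_\sigma \cap B$ is a non-empty open subset, as required.

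The hard part is recognizing how the hypothesis enters: it is exactly what produces the ``inner'' witness $y \in \bigcap_{j \in \tau} U_j$ near $x$, after which the reflection $z$ handles all $j \in \tau$ simultaneously through a single supporting-hyperplane argument. Without condition (ii), the intersection $\bigcap_{j \in \tau} U_j$ may be empty (cf.\ the paper's example $U_1 = \{y > x^2\}$, $U_2 = \{y < -x^2\}$), no such $y$ exists, and the conclusion can fail. Every other step is routine: shrinking $B$, picking supporting hyperplanes, and choosing the final ball $N$ are all local open-set manipulations.
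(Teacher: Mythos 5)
Your proof is correct, and the core idea is the same as the paper's: isolate the "active" indices $\tau$ where $x$ sits on $\partial U_j$, use the hypothesis to find a nearby interior point $y \in \bigcap_{j\in\tau} U_j$, and reflect $y$ through $x$ to land strictly outside every $U_j$ with $j\in\tau$. Where you diverge is the final openness step: the paper lets $y$ range over the open set $\bigcap_{j\in\tau}U_j$ and shows the reflected points sweep out an open cone $C_x\subseteq A^{\U}_\sigma\cap B$, arguing $z\notin U_j$ purely from convexity of the segment $[z,y]$ through $x$; you instead fix a single $y$, invoke the supporting hyperplane theorem at $x$ for each $U_j$ (using openness of $U_j$ to get a strict half-space containment), and place a small ball around the single reflected point $z$ inside the complementary open half-spaces. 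Both are valid; yours trades the geometric cone argument for the supporting hyperplane theorem, which makes the openness of the resulting neighborhood immediate at the cost of invoking a slightly heavier tool. One small stylistic difference: the paper argues by contradiction from $\Int(A^{\U}_\sigma\cap B)=\varnothing$, whereas you produce the open subset directly, which reads a bit more cleanly.
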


  \begin{proof}
    Assume the converse, i.e.  there exists non-empty $ \sigma\subset [n]$ and an open subset $B \subseteq \mathbb{R}^d$  such that that $A_\sigma^{\mathcal{U}} \cap B \neq \varnothing$  and $\text{int}(A_\sigma^\mathcal{U} \cap B) = \varnothing$.
    Let $x \in A_\sigma^\mathcal{U} \cap B$, and   denote by $\tau\subset [n]\setminus \sigma$, the maximal subset such that $x\in \cap_{j\in \tau} \partial U_j$. Note that $\tau$ is non-empty\footnote{If $x\notin \partial U_j$  $\forall j\notin \sigma$, then (because $U_i$ are open) there exists a small open ball $B^\prime \ni x $ such that 
     $B^\prime\subset   A_\sigma^\mathcal{U} $, thus $\text{int}(A_\sigma^\mathcal{U} \cap B) \supseteq \text{int}(A_\sigma^\mathcal{U} \cap B\cap B^\prime)\neq \varnothing$, a contradiction.} and therefore (using the assumption of the lemma)  $x\in \partial\left(  \cap_{j\in \tau} U_j\right)$.  
     Denote by $\varepsilon_0>0$ the maximal radius  such that  the open ball $B_{\varepsilon_0}(x)$ satisfies (a) $B_{\varepsilon_0}(x) \subset B\cap \cap_{i\in \sigma}U_i$ and  (b) for every $l\not\in  \left( \sigma\cup \tau\right)$,  $B_{\varepsilon_0}(x)\cap U_l=\varnothing$.

     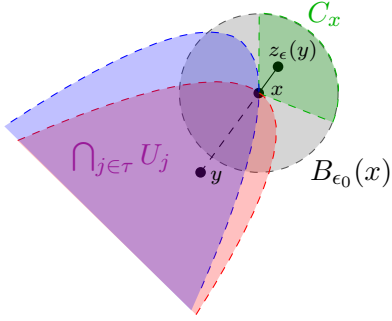
\begin{figure}[h] 
\definecolor{qqccqq}{rgb}{0,0.7,0}
\definecolor{qqqqzz}{rgb}{0,0,0.6}
\definecolor{ffqqqq}{rgb}{1,0,0}
\definecolor{ttfftt}{rgb}{0.2,1,0.2}
\definecolor{qqqqff}{rgb}{0,0,1}
\definecolor{ffqqtt}{rgb}{1,0,0.2}
\definecolor{uququq}{rgb}{0.35,0.35,0.35}
\definecolor{uzuzuz}{rgb}{0.05,0.05,0.05}
\definecolor{zzttqq}{rgb}{0.6,0.2,0}
\definecolor{qqzzzz}{rgb}{0,0.6,0.6}
\definecolor{kk}{rgb}{0,0,0}
\definecolor{eeaaee}{rgb}{0.6, 0.1, 0.6}
\begin{center}
\begin{tikzpicture}
\draw[uququq, dash pattern = on 3pt off 3 pt] (30pt, 30pt) circle (30pt);
\fill[uququq, fill opacity = 0.25] (30pt, 30pt) circle (30pt);
\fill[black] (8pt,0pt) circle (2pt) ;
\fill[black] (30pt,30pt) circle (2pt) ;
\fill[black] (37.3pt,40pt) circle (2pt) ;
\draw[black, dash pattern = on 3 pt off 3 pt] (8pt, 0pt) -- (30pt, 30pt);
\draw[black] (30pt, 30pt) -- (37.3pt, 40pt);
\fill[qqccqq, fill opacity = 0.25] (30pt,30pt) -- (58pt, 19pt) arc (-22:90:30pt) -- cycle;
\draw[qqccqq, dash pattern = on 3 pt off 3 pt] (30pt,30pt) -- (58pt, 19pt) arc (-22:90:30pt) -- cycle;
\draw [ffqqqq, dash pattern = on 3pt off 3pt, domain=-1.6:1.7, rotate around={130:(0pt, 30pt)}]  plot ({\x-0.8}, {(\x)^2+0.22});
\draw [qqqqff, domain=-1.89:1.5, dash pattern = on 3 pt off 3 pt, rotate around={140:(0,30pt)}] plot ({\x}, {(\x+0.2)^2});
\fill [ffqqqq, fill opacity = 0.25, domain=-1.6:1.7, rotate around={130:(0pt, 30pt)}]  plot ({\x-0.8}, {(\x)^2+0.22});
\fill [qqqqff, domain=-1.98:1.5, fill opacity = 0.25, rotate around={140:(0,30pt)}] plot ({\x}, {(\x+0.2)^2});
\begin{scriptsize}
\end{scriptsize}

\begin{scriptsize}
\draw[color=black] (14pt, -2pt) node {$y$};
\draw[color=black] (37pt, 31pt) node {$x$};
\draw[color=black] (43pt, 46pt) node {$z_\epsilon(y)$};
\end{scriptsize}
\draw[color=black] (65pt, 0pt) node {$B_{\epsilon_0}(x)$};
\draw[color=qqccqq] (55pt, 60pt) node {$C_x$};
\draw[color=eeaaee] (-22pt, 5pt) node {$\bigcap_{j \in \tau}U_j$};

\end{tikzpicture}
\end{center}
\caption{
Construction of points in $\text{int}(A^{\mathcal{U}}_\sigma \cap B)$ from the proof of Lemma \ref{L:open_nondegen}.}
 \label{fig-open-nondegen}
\end{figure}
     
     Observe that for every  point $y\in    \cap_{j\in \tau} U_j $ and   every $\varepsilon\in (0,\varepsilon_0)$,  the point  $z_\epsilon(y)=x+\varepsilon \frac{x-y}{\| x-y\|}$ satisfies $z_\epsilon(y)\not\in U_j$  for every $j\not\in \sigma$. This is because for every $j\in \tau$, the open set $U_j$ is convex, thus if  $x\in \partial U_j$, and $y\in U_j$, then $z_\epsilon(y)\not\in U_j$, as in Figure \ref{fig-open-nondegen}.  We thus conclude that $z_\epsilon(y)\in A_\sigma^\mathcal{U} \cap B$.  Since the intersection $ \cap_{j\in \tau} U_j $ is open, the totality of all such  points $z_\epsilon(y)$ form an open cone $C_x\subseteq A_\sigma^\mathcal{U} \cap B$. Therefore $\text{int}(A_\sigma^\mathcal{U} \cap B)  \supseteq \text{int}(C_x)\neq \varnothing$, a contradiction. 
 \end{proof}

\begin{lemma}\label{lemma:general:postiotion}
Suppose $\mathcal{U}$ is an open and convex cover in general position, then $\mathcal{U}$ is a non-degenerate cover.
\end{lemma}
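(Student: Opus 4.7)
The plan is to prove condition (ii) of Definition~\ref{dfn:nondegenerate} directly from the general-position hypothesis, after which Lemma~\ref{L:open_nondegen} (which converts (ii) into (i) for open convex covers) delivers non-degeneracy. Suppose toward contradiction that (ii) fails, so there are a non-empty $\sigma\subseteq[n]$ and a point $x\in\bigcap_{i\in\sigma}\partial U_i$ with $x\notin\partial(\bigcap_{i\in\sigma}U_i)$. Openness of each $U_i$ forces $x\notin U_i$ for every $i\in\sigma$, hence $x\notin\bigcap_{i\in\sigma}U_i$, and together with $x\notin\partial(\bigcap_{i\in\sigma}U_i)$ this gives $x\notin\Cl(\bigcap_{i\in\sigma}U_i)$; thus some $\delta>0$ satisfies $B_\delta(x)\cap\bigcap_{i\in\sigma}U_i=\varnothing$.

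The engine of the proof is a Hausdorff-small perturbation $\V$ of $\U$ whose code contains a codeword the original cover cannot realize, contradicting general position. For each $i\in\sigma$, the openness and convexity of $U_i$ with $x\in\partial U_i$ provide a unit vector $u_i$ such that $x+s u_i\in U_i$ for every sufficiently small $s>0$. Pick $t>0$ smaller than both the general-position tolerance $\varepsilon$ and the admissible $s$ for every $u_i$, and set $V_i=U_i-t u_i$ for $i\in\sigma$ and $V_i=U_i$ otherwise. Then $d_H(U_i,V_i)\leq t<\varepsilon$, and the construction ensures $x\in V_i$ for every $i\in\sigma$. With $\tau:=\{j\in[n]\setminus\sigma : x\in U_j\}$, the codeword realized by $x$ in $\V$ is exactly $\sigma\cup\tau$, so $\sigma\cup\tau\in\code(\V,\R^d)$; general position then forces $\sigma\cup\tau\in\code(\U,\R^d)$, so one can select $y\in A^{\U}_{\sigma\cup\tau}\subseteq\bigcap_{i\in\sigma}U_i$.

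To finish, I would invoke the standard geometric fact that for any open convex $U\subseteq\R^d$ with $x\in\Cl(U)$ and $y\in U$, one has $(x,y]\subseteq U$. Applied to each $U_i$ with $i\in\sigma$ this yields $(x,y]\subseteq\bigcap_{i\in\sigma}U_i$, and for sufficiently small $s\in(0,1]$ the point $(1-s)x+s y$ lies in $B_\delta(x)$, contradicting the choice of $\delta$. Condition (ii) therefore holds, and Lemma~\ref{L:open_nondegen} immediately yields (i), establishing non-degeneracy.

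The only non-trivial geometric input I anticipate is the half-open segment lemma used in the last step; its short proof (select $x'\in U_i$ with $\|x-x'\|$ arbitrarily small and use that $\conv(B(y,r)\cup\{x'\})\subseteq U_i$ contains an open ball of radius $s r$ around $(1-s)x'+s y$, then let $x'\to x$) is classical, so the remainder of the argument reduces to careful bookkeeping about how the atoms transform under a single translation of the finitely many sets indexed by $\sigma$.
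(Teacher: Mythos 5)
Your proof is correct and follows the same high-level strategy as the paper's: reduce to establishing condition (ii) of Definition~\ref{dfn:nondegenerate}, then invoke Lemma~\ref{L:open_nondegen} to obtain (i), with the core step being a Hausdorff-small perturbation of $\mathcal{U}$ that changes the code, contradicting general position. The difference is in the perturbation and the ordering of the segment argument. The paper first applies the half-open segment argument directly to the point $x\in\bigcap_{i\in\sigma}\partial U_i\setminus\partial(\bigcap_{i\in\sigma}U_i)$ to conclude $\bigcap_{i\in\sigma}U_i=\varnothing$ outright (if some $z$ were in the intersection, the open segment from $x$ to $z$ would force $x\in\partial(\bigcap_{i\in\sigma}U_i)$); it then takes $V_i(\varepsilon)=U_i\cup B_\varepsilon(x)$ for $i\in\sigma$, which makes $\bigcap_{i\in\sigma}V_i(\varepsilon)=B_\varepsilon(x)$ and immediately creates a codeword $\tau\supseteq\sigma$ that the original code cannot have. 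Your route instead concludes only that $B_\delta(x)\cap\bigcap_{i\in\sigma}U_i=\varnothing$, perturbs by translations $V_i=U_i-tu_i$ to put $x$ into every $V_i$, $i\in\sigma$, uses general position to find $y\in A^{\mathcal U}_{\sigma\cup\tau}\subseteq\bigcap_{i\in\sigma}U_i$, and then invokes the segment lemma to push a point of $(x,y]$ into $B_\delta(x)$, a contradiction. A small advantage of your translation perturbation is that it keeps the $V_i$ open and convex, so the argument would survive even under a more restrictive reading of general position that only allows open convex competitors, whereas the paper's $U_i\cup B_\varepsilon(x)$ is open but not convex; on the other hand the paper's version is slightly shorter because the identity $\bigcap_{i\in\sigma}(U_i\cup B_\varepsilon(x))=(\bigcap_{i\in\sigma}U_i)\cup B_\varepsilon(x)$ collapses immediately once $\bigcap_{i\in\sigma}U_i=\varnothing$ is known.
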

\begin{proof}
Assume $\mathcal{U}$ is in general position, open, convex, yet {\it not}  non-degenerate. Then, by Lemma \ref{L:open_nondegen} there exists  a non-empty subset $\sigma \subseteq [n]$ so that $\bigcap_{i \in \sigma}\partial U_i \not\subseteq \partial(\bigcap_{i\in \sigma}U_i)$. Let's choose $x \in \left( \bigcap_{i \in \sigma}\partial U_i \right) \setminus \left( \partial \bigcap_{i \in \sigma} U_i\right) $. Suppose there exists $z \in \bigcap_{i \in \sigma}U_i$, then  the open line segment between $x$ and $z$ is contained in $\bigcap_{i\in \sigma}U_i$, and thus  $x \in \partial (\bigcap_{i \in \sigma}U_i)$, a contradiction. Therefore, $\bigcap_{i \in \sigma}U_i= \varnothing$, and  for every $\tau\supseteq \sigma$, $\tau \not \in \text{code}(\mathcal{U},\mathbb{R}^d)$.

For any  $\varepsilon > 0$, define an open cover $\mathcal{V}(\varepsilon)=\{{V_i}(\varepsilon)\}$ by $V_i(\varepsilon) = U_i \cup B_\varepsilon(x)$ for $i \in \sigma$ and $V_j(\varepsilon) = U_j$ otherwise. Notice that $\bigcap_{i \in \sigma} V_i(\varepsilon) = B_\varepsilon(x)$.  Thus  for any $ \varepsilon > 0$, there exists $  \tau \supseteq \sigma$ with $\tau \in \text{code}(\mathcal{V}(\varepsilon), \mathbb{R^d})$. Because $x$ lies in the boundary of $U_i$ for each $i \in \sigma$, each $V_i(\varepsilon)$ is no more than   $\varepsilon$ away from $U_i$ w.r.t. the Hausdorff distance.  Therefore $\U$ is not in general position, a contradiction. 
\end{proof}

\begin{lemma} \label{lemma:key:closed:lemma} Assume that every atom of the cover  $\U=\{U_i\}$ is top-dimensional, 
 i.e.\   any non-empty intersection
 with an open set $B\subseteq \mathbb R^d $ has non-empty interior, 
 and the subsets $U_i$ are closed and convex, 
then for any non-empty $\tau\subseteq [n]$, 
\begin{align}\label{eq:key:closed:lemma}
\bigcap_{i\in \tau}\partial U_i&\subseteq \partial \left(\bigcup_{i\in \tau} U_i  \right), \\
\label{eq:condition:i}
\bigcap_{i\in \tau}\partial U_i&\subseteq \partial \left(\bigcap_{i\in \tau} U_i  \right).
\end{align}
\end{lemma}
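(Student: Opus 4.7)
The plan is to handle the two inclusions separately, since they have quite different character. The second inclusion \eqref{eq:condition:i} is essentially formal: if $x\in\bigcap_{i\in\tau}\partial U_i$, closedness of the $U_i$ gives $x\in U_i$ for each $i\in\tau$, hence $x\in\bigcap_{i\in\tau}U_i\subseteq\Cl(\bigcap_{i\in\tau}U_i)$. Since $x\notin\Int(U_j)$ for any $j\in\tau$ and $\Int(\bigcap_{i\in\tau}U_i)=\bigcap_{i\in\tau}\Int(U_i)$ by Lemma~\ref{Lemma4.1} (eq.~\eqref{eq:1l}), we obtain $x\notin\Int(\bigcap_{i\in\tau}U_i)$, proving the inclusion. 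Note that neither convexity nor top-dimensionality is actually used for \eqref{eq:condition:i}.

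The first inclusion \eqref{eq:key:closed:lemma} is the substantive content. Fix $x\in\bigcap_{i\in\tau}\partial U_i$. Closedness gives $x\in\bigcup_{i\in\tau}U_i\subseteq\Cl(\bigcup_{i\in\tau}U_i)$, so it suffices to produce, for every $\varepsilon>0$, a point $y$ within distance $\varepsilon$ of $x$ with $y\notin\bigcup_{i\in\tau}U_i$. Let $\sigma=\{i\in[n]:x\in U_i\}\supseteq\tau$; then $x\in A^{\U}_\sigma$, so $\sigma\in\code(\U,\R^d)$. By top-dimensionality, for every open ball $B$ around $x$ the set $A^{\U}_\sigma\cap B$ has nonempty interior, and any $y$ in this interior lies in $\Int(\bigcap_{i\in\sigma}U_i)=\bigcap_{i\in\sigma}\Int(U_i)$ by Lemma~\ref{Lemma4.1} (eq.~\eqref{eq:1l}).

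For each $i\in\tau$, pick an outward unit normal $n_i$ of a supporting hyperplane to the closed convex set $U_i$ at $x\in\partial U_i$, so that $U_i\subseteq\{z\in\R^d:\langle n_i,z-x\rangle\le 0\}$. A standard consequence of convexity is that points of $U_i$ lying on one of its supporting hyperplanes belong to $\partial U_i$; hence interior points of $U_i$ satisfy the \emph{strict} inequality $\langle n_i,y-x\rangle<0$. Consequently the nonempty open set $\{y-x:y\in\Int(A^{\U}_\sigma\cap B)\}$ is contained in the open convex cone $C^-=\{w\in\R^d:\langle n_i,w\rangle<0\ \text{for all}\ i\in\tau\}$, so $C^-\ne\varnothing$. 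Its reflection $C^+:=-C^-=\{w:\langle n_i,w\rangle>0\ \text{for all}\ i\in\tau\}$ is therefore also nonempty, so I can choose $v\in C^+$. For sufficiently small $t>0$, the point $y=x+tv$ satisfies $\langle n_i,y-x\rangle=t\langle n_i,v\rangle>0$ for each $i\in\tau$, so $y\notin U_i$ by the supporting half-space inclusion, and in particular $y\notin\bigcup_{i\in\tau}U_i$. Letting $t\to 0^+$ produces points arbitrarily close to $x$ outside $\bigcup_{i\in\tau}U_i$, and the proof is complete.

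The main obstacle I anticipate is coordinating the three ingredients: top-dimensionality provides an open set of inward directions from $x$; convexity, via the strict separation at interior points, packages these inward directions into the open cone $C^-$; and the symmetry $C^+=-C^-$ finally converts them into outward directions that simultaneously leave every $U_i$ with $i\in\tau$. The strict inequality at supporting hyperplanes is the small but essential technical point that holds this coordination together; without top-dimensionality (e.g.\ when $U_1,U_2$ are opposite closed half-planes meeting along a line), $C^-$ can collapse and the inclusion genuinely fails, which is precisely what the hypothesis rules out.
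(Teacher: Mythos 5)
Your proof is correct and reaches the same geometric core as the paper's, but organizes it differently in two worthwhile ways. For \eqref{eq:condition:i} you argue directly from closedness and the formal identity $\Int(\bigcap_{i\in\tau}U_i)=\bigcap_{i\in\tau}\Int(U_i)$, correctly observing that neither convexity nor top-dimensionality is needed; the paper instead deduces \eqref{eq:condition:i} as a consequence of \eqref{eq:key:closed:lemma}, so your route genuinely sharpens the hypotheses for that half of the statement. For \eqref{eq:key:closed:lemma} the paper argues by contradiction: assuming a ball $B\ni x$ lies inside $\bigcup_{i\in\tau}U_i$, it reflects a hypothetical point of $B\cap\bigcap_{i\in\tau}\Int(U_i)$ through $x$ (using convexity) to conclude that this intersection is empty, then shrinks $B$ to avoid $U_j$ for $j\notin\rho$ in order to identify $\Int(B\cap A^{\U}_\rho)$ with $B\cap\bigcap_{i\in\tau}\Int(U_i)$ and contradict top-dimensionality of the atom. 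You run the argument forward: top-dimensionality supplies an interior point $y$ of the atom $A^{\U}_\sigma$ near $x$, hence $y\in\bigcap_{i\in\tau}\Int(U_i)$; supporting hyperplanes at $x$ (valid since the $U_i$ are closed and convex) place the directions $y-x$ into a nonempty open cone $C^-$, and its antipode $C^+$ yields points $x+tv$ lying outside every $U_i$ with $i\in\tau$. The convexity-driven reflection through $x$ is the same move in both proofs, but your supporting-hyperplane/cone formulation makes it explicit, and the direct approach lets you skip the ball-shrinking step because you only need the inclusion $A^{\U}_\sigma\subseteq\bigcap_{i\in\sigma}U_i$ rather than an equality on a shrunk ball.
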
 
\begin{proof} To show \eqref{eq:key:closed:lemma} assume the converse. Then  there exist  a point $x\in \left(\bigcap_{i\in \tau}\partial U_i \right) \bigcap \Int\left(\bigcup_{i\in \tau} U_i  \right)$ at the interior,   and  an open ball $B\ni x$,  such that $B \subseteq \bigcup_{i\in\tau}U_i$.
First, let us show that  these assumptions imply that 
\begin{equation} \label{eq:false} B\cap \bigcap_{i \in \tau}\Int(U_i) =\varnothing.
\end{equation}
Indeed, if there existed  a point $y \in B\cap \bigcap_{i \in \tau}\Int(U_i)$, then for every $\varepsilon>0$ such that  $z=x+\varepsilon(x-y)\in B$ and every $i \in \tau$, $z \not \in U_i$ by convexity of  $U_i$\footnote{This is because $y\in \Int(U_i)$, $x\in \partial U_i$ and $U_i$ is convex, thus for every $\varepsilon>0$,  $z=x+\varepsilon(x-y)\not\in U_i$.}. 
This implies $B  \not\subseteq\bigcup_{i \in \tau}U_i$, a contradiction, thus \eqref{eq:false} holds. \\

Denote by  $\rho \supseteq \tau$ the element of $\code\left( \{U_i\},\mathbb R^d\right) $  such that  $x \in A^{\U}_\rho = \bigcap_{i \in \rho}U_i \setminus \bigcup_{j \not \in \rho} U_j$.  Because the sets $U_j$ are closed, we can choose the open ball $B\ni x$, that satisfies \eqref{eq:false} so that it is  disjoint from $\bigcup_{j \not\in \rho}U_j$. Therefore, using \eqref{eq:1l}, we obtain 

$$\Int(B \cap A^{\U}_\rho ) = \Int(B \cap \bigcap_{i \in \rho} U_i ) \subseteq \Int(B \cap \bigcap_{i \in \tau} U_i  ) = B \cap  \bigcap_{i \in \tau}\Int(U_i)= \varnothing.$$ 
Since $x\in B \cap A^{\U}_\rho$, this contradicts the non-degeneracy of $\U$, and thus finishes the proof of \eqref{eq:key:closed:lemma}.  \\

To prove \eqref{eq:condition:i}, consider  $x\in \bigcap_{i\in \tau}\partial U_i\subseteq \bigcap_{i\in \tau} U_i   $.   
Because of  \eqref{eq:key:closed:lemma},  any open neighborhood $O\ni x$ satisfies $O\not\subseteq \bigcup_{i\in \tau} U_i $ and thus  $O\not\subseteq \bigcap_{i\in \tau} U_i $. Therefore  $x\in \partial \left(\bigcap_{i\in \tau} U_i  \right)$. 
\end{proof} 
Note that if the condition that the sets $U_i$ are convex  is violated, then the conclusions of the above lemma may not hold. 
For example, the  sets  $U_1=\{ (x,y)\in \mathbb R^2 \,\vert\,   y\leq x^2\}$ and  $U_2=\{ (x,y)\in \mathbb R^2 \,\vert\,   y\geq -x^2\}$ do not satisfy the inclusion \eqref{eq:key:closed:lemma}.

\begin{lemma}\label{lemma55}If the cover  $\mathcal{U}=\{U_i\}_{i\in [n]}$  is non-degenerate, then for every non-empty subset $\sigma\subseteq [n]$ 
\begin{align} 
\label{eq:2l}
U_i  \text{ are closed and convex} &\implies \Int(\bigcup_{i\in \sigma} U_i) = \bigcup_{i\in \sigma} \Int(U_i),
\\
\label{eq:3l} 
U_i  \text{ are open  and convex} &\implies  \Cl(\bigcap_{i\in \sigma} U_i) = \bigcap_{i\in \sigma} \Cl(U_i).
\end{align}
\end{lemma}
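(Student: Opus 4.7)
The plan is to use Lemma \ref{Lemma4.1} to reduce the two equalities to single inclusions, and then obtain those inclusions by combining Lemma \ref{lemma:key:closed:lemma} (in the closed case) or condition (ii) of non-degeneracy (Definition \ref{dfn:nondegenerate}, in the open case) with a small-ball argument that handles the indices where the boundary condition fails. Specifically, from Lemma \ref{Lemma4.1} we already have $\bigcup_{i\in\sigma}\Int(U_i) \subseteq \Int(\bigcup_{i\in\sigma} U_i)$ and $\Cl(\bigcap_{i\in\sigma} U_i) \subseteq \bigcap_{i\in\sigma}\Cl(U_i)$, so only the reverse inclusions remain.

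For the closed case \eqref{eq:2l}, take $x \in \Int(\bigcup_{i\in\sigma} U_i)$ and an open ball $B \ni x$ with $B \subseteq \bigcup_{i\in\sigma} U_i$. Let $\tau = \{i\in\sigma : x\in U_i\}$, which is non-empty since $x$ lies in the union. Assume toward a contradiction that $x\notin \Int(U_i)$ for every $i\in\tau$; because each $U_i$ is closed this forces $x \in \bigcap_{i\in\tau}\partial U_i$. Since each $U_j$ with $j \in \sigma\setminus\tau$ is closed and does not contain $x$, I can shrink $B$ so that it is disjoint from every such $U_j$, giving $B \subseteq \bigcup_{i\in\tau} U_i$. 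But Lemma \ref{lemma:key:closed:lemma} applied to $\tau$ yields $x \in \partial\bigl(\bigcup_{i\in\tau} U_i\bigr)$, contradicting $B \subseteq \bigcup_{i\in\tau} U_i$. Hence $x \in \Int(U_i)$ for some $i\in\tau$, as required.

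For the open case \eqref{eq:3l}, take $x \in \bigcap_{i\in\sigma}\Cl(U_i)$ and let $\tau = \{i\in\sigma : x\notin U_i\}$. If $\tau = \varnothing$ then $x \in \bigcap_{i\in\sigma} U_i \subseteq \Cl(\bigcap_{i\in\sigma} U_i)$ immediately. Otherwise, since each $U_i$ is open, $x \in \Cl(U_i)\setminus U_i = \partial U_i$ for all $i\in\tau$, so $x \in \bigcap_{i\in\tau}\partial U_i$. By condition (ii) of non-degeneracy, $x \in \partial\bigl(\bigcap_{i\in\tau} U_i\bigr) \subseteq \Cl\bigl(\bigcap_{i\in\tau} U_i\bigr)$. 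Meanwhile, for every $i\in\sigma\setminus\tau$ the point $x$ lies in the open set $U_i$, so there is a $\delta>0$ with $B_\delta(x) \subseteq \bigcap_{i\in\sigma\setminus\tau} U_i$. Since $x\in\Cl(\bigcap_{i\in\tau}U_i)$, the ball $B_\delta(x)$ meets $\bigcap_{i\in\tau} U_i$, and any such intersection point lies in $\bigcap_{i\in\sigma} U_i$. Letting $\delta \to 0$ yields $x\in\Cl(\bigcap_{i\in\sigma} U_i)$.

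The main obstacle is not conceptual but bookkeeping: the ``input'' lemmas (Lemma \ref{lemma:key:closed:lemma} and condition (ii)) only produce information about the subset $\tau$ of indices on whose boundary $x$ sits, whereas \eqref{eq:2l} and \eqref{eq:3l} are statements about the full index set $\sigma$. The key observation making this separation harmless is that for indices in $\sigma\setminus\tau$ the relevant sets either miss $x$ and are closed (so a small ball avoids them) or contain $x$ and are open (so a small ball lies inside them), which in each case lets the small-ball argument pass cleanly from $\tau$ back to $\sigma$.
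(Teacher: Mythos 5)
Your proposal is correct, and it follows essentially the same route as the paper's own proof: both arguments introduce the auxiliary index set $\tau$ of ``bad'' indices on whose boundary $x$ sits, invoke Lemma \ref{lemma:key:closed:lemma} (closed case) or condition (ii) of Definition \ref{dfn:nondegenerate} (open case) to transfer boundary membership to $\tau$, and then use a small-ball argument to pass from $\tau$ to $\sigma$. The only difference is cosmetic: in the closed case the paper derives $x\in\partial(\bigcup_{i\in\sigma} U_i)$ via a chain of set identities involving $O\cap\Int(A)$, whereas you reach the same conclusion more directly by shrinking $B$ and contradicting $x\in\partial(\bigcup_{i\in\tau}U_i)$.
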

\begin{proof}
First, we show that 
 if the cover $\mathcal U$ is non-degenerate and closed convex, then 
\begin{equation} \label{eq:interiorinclusions}  \Int(\bigcup_{i\in \sigma} U_i) \subseteq \bigcup_{i\in \sigma} \Int(U_i).\end{equation}
It suffices to show that if   $x\notin \bigcup_{i\in \sigma} \Int(U_i)$, then  $x\in \partial(\bigcup_{i\in \sigma} U_i)\bigcup \left( \mathbb R^d \setminus \bigcup_{i\in \sigma} U_i \right) $. 
 If $x\notin \bigcup_{i\in \sigma} U_i$, then this is true, thus we can assume that the 
 set $\tau\od \{i\in \sigma \, \vert \, x\in U_i\} $  is non-empty, 
 and since $x\notin \bigcup_{i\in \sigma} \Int(U_i)$, we conclude that $x\in \bigcap_{i\in \tau}\partial U_i$. Thus by Lemma \ref {lemma:key:closed:lemma} (\eqref{eq:key:closed:lemma}),    $x\in \partial(\bigcup_{i\in \tau } U_i)$. Now observe that  $ \bigcup_{i\in \sigma} U_i= A\cup B$ with $A\od \bigcup_{i\in \tau } U_i$ and $B\od \bigcup_{j\in \sigma\setminus \tau} U_j$.  
 Since $x\notin B$, and $B$ is closed, there exists an open neighborhood $O\ni x $ with $O\cap B=\varnothing $. Therefore, using \eqref{eq:1l} we obtain   that 
 $$O\cap  \Int (A)    =  \Int (O\cap  A) =  \Int (O\cap ( A\cup B))= 
 O\cap \Int\left(A\cup B\right),$$
 and thus we  conclude  
 $$ x\in \partial A \cap  O= \left ( A\setminus  \Int A  \right)  \cap O= 
   \left (  \left (  A\cup B \right)\setminus \left(  \Int \left (  A\cup B\right) \cap O   \right) \right)  \cap O=\partial \left (  A\cup B\right) \cap O.
   $$
  \noindent Thus,  $x\in\partial\left(A\cup B \right)=\partial\left( \bigcup_{i\in \sigma} U_i\right)$,  which proves \eqref{eq:interiorinclusions}. Combined with \eqref{eq:1half} in Lemma \ref{Lemma4.1}, this finishes the proof of \eqref{eq:2l}.
 
 To prove   \eqref{eq:3l}, taking into account  \eqref{eq:3half}, we need to show that 
$ \Cl(\bigcap_{i\in \sigma} U_i) \supseteq \bigcap_{i\in \sigma} \Cl(U_i) $. 
Assume the converse, then there exists $x\in  \bigcap_{i\in \sigma} \Cl(U_i)  $  and $r>0$ such that 
\begin{equation} \label{eq:epsball}
 \forall \varepsilon\in (0,r) \text{ the open } \varepsilon\text{-ball }  B_\varepsilon(x) \text{ satisfies }  B_\varepsilon(x) \cap \bigcap_{i\in \sigma} U_i=\varnothing.  
\end{equation} 
Denote $\tau\od \{ i\in \sigma \, \vert \, x\in \partial U_i\}$; we can assume that  $\tau$  is non-empty (otherwise, $x\in  \Cl \left(\bigcap_{i\in \sigma} U_i \right)$). 
Using the condition (ii) of Definition \ref{dfn:nondegenerate} we conclude  $x\in \bigcap_{i\in \tau}\partial U_i\subseteq \partial \left(  \bigcap_{i\in \tau} U_i   \right) $, thus for  every  open $\varepsilon$-ball $B_\varepsilon(x)$ centered at $x$,  $B_\varepsilon(x)\cap  \bigcap_{i\in \tau} U_i\neq \varnothing$. Because $x\in \bigcap_{j\in \sigma\setminus \tau} U_j$, and $U_j$ are open,  for a sufficiently small  $\varepsilon$, $B_\varepsilon(x)\subset  \bigcap_{j\in \sigma\setminus \tau} U_j$. Thus $B_\varepsilon(x) \cap  \bigcap_{i\in \sigma} U_i\neq \varnothing$, which contradicts \eqref{eq:epsball}. This finishes the proof of \eqref{eq:3l}.
\end{proof}

%

 \medskip 
 \subsection{Proof of Theorem \ref{thm-non-degenerate}.}
 \label{sec:nondeg:proofs}
 
\begin{proof}
We need to show   that if $\mathcal{U}$ is convex and non-degenerate, then the cover of closures $\Cl(\mathcal{U}) \od \{\Cl(U_i)\}$ and the cover of interiors $\Int(\mathcal{U}) \od  \{\Int(U_i)\}$ have the same code as $\mathcal{U}$. 
First, we show that $\code(\mathcal{U}) = \code(\Cl(\mathcal{U}))$. Let $A^{\U}_\sigma$ denote an atom of $\U$ and $A^{\Cl(\U)}_\sigma$ denote  the corresponding atom of $\Cl(\U)$. If $A^{\U}_\sigma = \varnothing $, then using \eqref{eq:3l} and \eqref{eq:4l} we conclude that 
\begin{align*}
\bigcap_{i \in \sigma} U_i &\subseteq \bigcup_{j \notin \sigma} U_j
\implies \Cl(\bigcap_{i \in \sigma} U_i) \subseteq \Cl(\bigcup_{j \notin \sigma} U_j)
 \implies
\bigcap_{i \in \sigma} \Cl(U_i) \subseteq \bigcup_{j \notin \sigma} \Cl(U_j),
\end{align*}
and thus  $A^{\Cl(\U)}_\sigma= \varnothing $. Therefore, $\code(\Cl(\U)) \subseteq \code(\U)$. On the other hand,  using \eqref{eq:4l} we obtain 
 \begin{align*}
A^{\Cl(\U)}_\sigma &= \bigcap_{i\in \sigma} \Cl(U_i) \setminus \bigcup_{j\notin \sigma} \Cl(U_j)
= \bigcap_{i \in \sigma} \Cl(U_i) \setminus  \Cl(\bigcup_{j\notin \sigma} U_j) \\
&= \left(\bigcap_{i \in \sigma} \Cl(U_i) \setminus  \bigcup_{j\notin \sigma} U_j\right)\setminus \left(\Cl(\bigcup_{j\notin \sigma} U_j) \setminus \bigcup_{j\notin \sigma} U_j \right)
\supseteq A^{\U}_\sigma \setminus \partial\left(\bigcup_{j\notin \sigma} U_j\right).
\end{align*}
Thus, if $A^{\U}_\sigma$ is non-empty, since it is top-dimensional while $\partial\left(\bigcup_{i\notin \sigma} U_i\right)$ is of codimension one, $A^{\U}_\sigma \not\subseteq \partial\left(\bigcup_{j\notin \sigma} U_j\right)$, implying $A^{\Cl(\U)}_\sigma \ne \varnothing $, and  thus, $\code(\U) = \code(\Cl(\U))$. \\

Next, we show that $\code(\Int(\U)) = \code(\U)$. Let $A^{\U}_\sigma$ be an  atom of $\U$ and $A^{\Int(\U)}_\sigma$ be the corresponding atom of $\Int(\U)$. If $A^{\U}_\sigma = \varnothing $, then using \eqref{eq:1l} and \eqref{eq:2l} we conclude that 
\begin{equation*}
\bigcap_{i \in \sigma} U_i \subseteq \bigcup_{j \notin \sigma} U_j
\,\implies\,  \Int(\bigcap_{i \in \sigma} U_i)  \subseteq \Int(\bigcup_{j \notin \sigma} U_j)
\,\implies\,
\bigcap_{i \in \sigma} \Int(U_i) \subseteq \bigcup_{j \notin \sigma} \Int(U_j),
\end{equation*}
 which implies $A^{\Int(\U)}_\sigma= \varnothing $. Therefore, $\code(\Int(\U)) \subseteq  \code(\U)$. On the other hand, using \eqref{eq:1l}  we obtain 
\begin{align*}
A^{\Int(\U)}_\sigma = \bigcap_{i\in \sigma} \Int(U_i) \setminus \bigcup_{j\notin \sigma} \Int(U_j) 
\supset \Int(\bigcap_{i\in \sigma} U_i) \setminus \bigcup_{j\notin \sigma} U_j=\\
= \left(\bigcap_{i\in \sigma} U_i \setminus \bigcup_{j\notin \sigma} U_j\right) \setminus \partial\left(\bigcap_{i\in \sigma} U_i\right) 
= A^{\U}_\sigma \setminus\partial\left(\bigcap_{i\in \sigma} U_i\right).
\end{align*}
Thus, if $A^{\U}_\sigma$ is non-empty, since it is top-dimensional while  $\partial\left(\bigcap_{i\in \sigma} A_i\right)$ is of codimension one,   $A^{\Int(\U)}_\sigma\ne \varnothing $. Therefore, $\code(\U) = \code(\Int(\U))$. 
\end{proof}

 \bigskip 
\subsection{Proof of Lemma \ref{lem:Ball:rules}.}\label{apendix:monotone}
In order to prove Lemma \ref{lem:Ball:rules}  we will need the following two lemmas. 
\begin{lemma} \label{lemma:code:union} Let $\W = \{W_i\} $ be a collection of sets, $W_i\subseteq X$, and $\C= \code(\W,X)$. Assume that $Q$ is a proper subset of some atom of $\W$, i.e.\   $\varnothing \neq Q\subsetneq  A^{\W}_\alpha$,  for a  non-empty $\alpha\in \C$. Then for any $\sigma_0 \subsetneq \alpha$, the cover $\V=\{V_i\} $ by the sets 
\begin{equation}\label{eq:Vi} V_i=  \begin{cases}  W_i, & \mbox{if }  i\in \sigma_0,  \\ W_i\setminus Q  , & \mbox{if }  i\not \in \sigma_0\end{cases} \end{equation} 
adds the codeword $\sigma_0$ to the original code, i.e.\   $ \code(\V,X) = \code(\W,X)\cup \{\sigma_0\}$.
\end{lemma}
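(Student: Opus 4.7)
My plan is to prove the lemma by a direct pointwise analysis of which sets $V_i$ a given point $x \in X$ lies in, separated into the two cases $x \in Q$ and $x \notin Q$. This gives an explicit description of the atoms of $\V$ in terms of those of $\W$, from which the equality of codes follows immediately.

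First, I would fix $x \in X$ and let $\nu(x) \od \{ i \in [n] \mid x \in V_i\}$ denote its codeword under $\V$. If $x \notin Q$, then by \eqref{eq:Vi} we have $x \in V_i$ iff $x \in W_i$, so $\nu(x)$ equals the $\W$-codeword of $x$. If $x \in Q$, then $x \in A^{\W}_\alpha$, so $x \in W_i$ iff $i \in \alpha$. From \eqref{eq:Vi}, $x \in V_i$ requires either $i \in \sigma_0$ (in which case $V_i = W_i$ contains $x$ because $\sigma_0 \subsetneq \alpha$) or $i \notin \sigma_0$ and $x \in W_i \setminus Q$ (which is impossible as $x \in Q$). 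Thus $\nu(x) = \sigma_0$ for every $x \in Q$.

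Combining these two cases yields $A^{\V}_{\sigma_0} = A^{\W}_{\sigma_0} \cup Q$, $A^{\V}_\alpha = A^{\W}_\alpha \setminus Q$, and $A^{\V}_\sigma = A^{\W}_\sigma$ for every $\sigma \in 2^{[n]} \setminus \{\sigma_0, \alpha\}$. The non-emptiness facts I need are then: $A^{\V}_{\sigma_0} \neq \varnothing$ because $Q \neq \varnothing$; $A^{\V}_\alpha \neq \varnothing$ because $Q \subsetneq A^{\W}_\alpha$ is a proper subset; and $A^{\V}_\sigma \neq \varnothing$ iff $\sigma \in \code(\W,X)$ for the remaining $\sigma$. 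This gives $\code(\V,X) = \code(\W,X) \cup \{\sigma_0\}$.

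The only real subtlety is making sure the special role of $\alpha$ and $\sigma_0$ is handled correctly — in particular, using the strict inclusion $Q \subsetneq A^{\W}_\alpha$ to guarantee that $\alpha$ is not lost from the code, and using $\sigma_0 \subsetneq \alpha$ (so $\sigma_0 \subseteq \alpha$) to guarantee that points of $Q$ remain in $V_i$ for every $i \in \sigma_0$. Beyond that, the proof is essentially a bookkeeping argument rather than a deep one, and I don't anticipate any serious obstacle; edge cases like $\sigma_0 = \varnothing$ or $\sigma_0 \in \C$ already, fall out of the general formulas above.
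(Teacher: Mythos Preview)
Your proof is correct and follows essentially the same approach as the paper: both split $X$ into $Q$ and $X\setminus Q$ and observe that on $X\setminus Q$ the cover $\V$ agrees with $\W$, while on $Q$ every point has $\V$-codeword $\sigma_0$. The paper packages this as the identity $\code(\V,X)=\code(\{V_i\cap (X\setminus Q)\},X\setminus Q)\cup\code(\{V_i\cap Q\},Q)$, whereas you spell out the resulting atoms explicitly; the content is the same.
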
 

\begin{proof} Since $Q\subsetneq  A^{\W}_\alpha$, $\code(\{V_i \cap (X \setminus Q)\}, X \setminus Q) = \code(\W, X)$.  Moreover,  because  $\sigma_0\subset \alpha$, $\code(\{V_i \cap Q\}, Q) = \{\sigma_0\}$ by construction. Finally, observe that if $X = Y \sqcup Z$, then $\code(\V, X) = \code(\{V_i \cap Y\}, Y) \cup \code(\{V_i \cap Z\}, Z)$, therefore we obtain 
\begin{equation*}\code(\V, X) = \code(\{V_i \cap (X \setminus Q)\}, X \setminus Q) \cup \code(\{V_i \cap Q\}, Q) = \code(\W,X) \cup \{\sigma_0\}.\end{equation*}
\end{proof}

Recall that  $M(\C)\subset \C $ denotes  the set of maximal codewords of $\C$.  A subset $A\cap B $ of a topological space is called {\it relatively open in}   $B$  if it is an open set in the induced topology of the subset $B$.

\begin{lemma} \label{lemma:chord-cutter}  
Let $\U = \{U_i\}$ be an open convex cover in $\mathbb R^d$, $d\geq 2$,  with  $\C=\code\left(\mathcal U,X\right)$. Assume that  there exists an 
open Euclidean ball $B\subset \mathbb R^d $ such that $\code(\{B\cap U_i\},B\cap X) = \C$, and for every maximal set  $\alpha \in M(\C)$, 
the set   $\partial B \cap  \Cl \left( \bigcap_{i\in \alpha }U_i \right)$  is non-empty and  is relatively open in  $\partial B$. 
 Then for any simplicial violator 
$\sigma_0\in \Delta(\C)\setminus \C $, there exists an open convex cover $\V =\{V_i\}$ with $V_i\subseteq U_i$, so that $ \code(\mathcal V,B\cap  X )=\C\cup \sigma_0$, and the cover $\V$ satisfies the  same condition above with  the same open ball  $B$. Moreover, if the cover $\U = \{U_i\}$ is non-degenerate, then the cover $\V$ can also be chosen to be non-degenerate. 
\end{lemma}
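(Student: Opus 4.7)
The strategy is to excise a small convex spherical cap $C$ from near $\partial B$, located inside the atom $A^\U_\alpha$ of a maximal codeword $\alpha\in M(\C)$ with $\sigma_0\subsetneq\alpha$ (such $\alpha$ exists because $\sigma_0\in\Delta(\C)\setminus\C$), so that $C$ becomes the new atom for $\sigma_0$. Since $\alpha$ is maximal, $A^\U_\alpha = \bigcap_{i\in\alpha}U_i$ is open and convex, and by hypothesis the set $S:=\partial B\cap \Cl(A^\U_\alpha)$ is non-empty and relatively open in $\partial B$.

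Pick $p$ in the relative interior of $S$. Let $\hat n$ be the outward unit normal to $\partial B$ at $p$, and for small $\e>0$ let $H$ be the hyperplane $\{x : \langle x-p,\hat n\rangle = -\e\}$, with open half-spaces $H^+\ni p$ and $H^-$; set $C := H^+\cap B$. By convexity and openness of $A^\U_\alpha$, a cone-from-interior argument (joining any point $p'\in A^\U_\alpha\cap B$ to a relative neighborhood of $p$ in $S$, and using that $[p',q)\subseteq A^\U_\alpha$ for every $q\in\Cl(A^\U_\alpha)$) shows that for sufficiently small $\e$, the closed cap $\Cl(C)\cap B = C\cup (H\cap B)$ is contained in $A^\U_\alpha$. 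Now define
\[
V_i := \begin{cases} U_i & i \in \sigma_0,\\ U_i\cap H^- & i\notin\sigma_0.\end{cases}
\]
Each $V_i$ is open, convex, and contained in $U_i$. With $W_i:=U_i\cap B$ and $Q:=\Cl(C)\cap B\subsetneq A^\W_\alpha$, a direct computation gives $V_i\cap B = W_i\setminus Q$ for $i\notin\sigma_0$ and $V_i\cap B=W_i$ for $i\in\sigma_0$, so Lemma~\ref{lemma:code:union} yields $\code(\V,B\cap X)=\C\cup\{\sigma_0\}$.

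For iteration, note $M(\C\cup\{\sigma_0\})=M(\C)$. For each maximal $\beta\neq\alpha$, select a reference point $p_\beta$ in the relative interior of $\partial B\cap \Cl(\bigcap_{i\in\beta}U_i)$, and choose $p\in\mathrm{relint}(S)\setminus\{p_\beta\}_{\beta\neq\alpha}$ (possible because $\mathrm{relint}(S)$ is a non-empty open subset of $\partial B$ while $\{p_\beta\}$ is finite). Choose $\e$ small enough that the cap $C$ avoids every $p_\beta$; then $\partial B\cap\Cl((\bigcap_{i\in\beta}U_i)\cap H^-)$ still contains a relative neighborhood of $p_\beta$, hence is non-empty and relatively open. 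For $\beta=\alpha$, the set $S\cap H^-$ is non-empty and relatively open by the relative openness of $S$ at $p$ combined with smallness of the cap. Non-degeneracy transfers: each $V_i$ is an intersection of a non-degenerate $U_i$ with a half-space (preserving condition (ii) of Definition~\ref{dfn:nondegenerate} via Lemma~\ref{lemma:nondeg}), while the new atom $C$ is an open convex cap, hence top-dimensional.

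The main obstacle is the simultaneous tuning of $H$: producing a cap $C$ that (a) lies inside $A^\U_\alpha$ (so the code change is exactly $\sigma_0$ with no spurious codewords appearing on $H\cap B$), (b) is small enough to preserve the closure hypothesis for every maximal $\beta\in M(\C)$, and (c) is compatible with non-degeneracy. That such an $H$ exists rests on $S$ being \emph{relatively open}: this supplies a full $(d-1)$-dimensional neighborhood of $p$ on $\partial B$ into which the hyperplane perturbation can be absorbed, while convexity of $A^\U_\alpha$ allows the cone construction to reach $\partial B$ from within the atom.
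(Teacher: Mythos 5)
Your proof follows essentially the same route as the paper: fix a facet $\alpha\supsetneq\sigma_0$, remove a small half-space cap $Q$ located inside the atom $A^{\U}_\alpha$ near $\partial B$ from the sets $U_i$ with $i\notin\sigma_0$, and invoke Lemma~\ref{lemma:code:union} to see that the new code is exactly $\C\cup\{\sigma_0\}$; then check openness/convexity, the ball condition, and non-degeneracy. The places where you go beyond the paper are mostly cosmetic: you make the choice of cutting half-space explicit (base point $p$ in the relative interior of $S=\partial B\cap\Cl(A^\U_\alpha)$, outward normal $\hat n$, offset $\e$), and you supply a cone-from-interior argument to justify that $\Cl(C)\cap B$ can be made to lie in $A^\U_\alpha$ — the paper simply asserts ``we can always select an oriented and closed half-space $P^+$'' with a figure. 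Your $V_i$ are $U_i$ or $U_i\cap H^-$ rather than the paper's $W_i$ or $W_i\setminus Q$ (with $W_i=U_i\cap B$), but you correctly observe that the restrictions to $B$ coincide, so Lemma~\ref{lemma:code:union} applies in the same way.

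One small point of caution on your iteration step. The reference points $p_\beta$ are not needed: for any facet $\beta\neq\alpha$, the atom $A^\U_\beta$ is disjoint from $A^\U_\alpha\supset Q$, so $A^\U_\beta\cap B\subseteq H^-$ and hence $\Cl(A^\U_\beta)\cap\partial B\subseteq\Cl(H^-)$, which shows $\partial B\cap\Cl(\bigcap_{i\in\beta}V_i)$ is \emph{unchanged}, not merely ``contains a neighborhood.'' Indeed, your inference ``contains a relative neighborhood of $p_\beta$, hence \ldots\ relatively open'' is a non sequitur — containing an open subset does not make a set open. Similarly, for $\beta=\alpha$ the relevant set is $S\cap\Cl(H^-)$ rather than $S\cap H^-$; the slice $S\cap\partial H$ is nonempty (it contains $\partial H\cap\partial B$), so this set is strictly speaking not relatively open. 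That said, the paper itself glosses over this with the phrase ``the cover $\V$ automatically satisfies the same condition,'' so this imprecision about what exactly is preserved across iterations is inherited from the source rather than a defect peculiar to your argument. The construction is sound and what the iteration genuinely uses — a nonempty relatively open patch of $\partial B$ inside $\Cl(\bigcap_{i\in\alpha}V_i)$ — does survive.
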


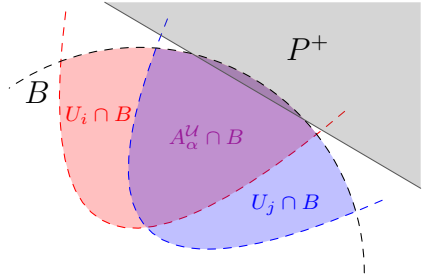
\begin{figure}[h] 
\definecolor{qqccqq}{rgb}{0,0.7,0}
\definecolor{qqqqzz}{rgb}{0,0,0.6}
\definecolor{ffqqqq}{rgb}{1,0,0}
\definecolor{ttfftt}{rgb}{0.2,1,0.2}
\definecolor{qqqqff}{rgb}{0,0,1}
\definecolor{ffqqtt}{rgb}{1,0,0.2}
\definecolor{uququq}{rgb}{0.35,0.35,0.35}
\definecolor{uzuzuz}{rgb}{0.05,0.05,0.05}
\definecolor{zzttqq}{rgb}{0.6,0.2,0}
\definecolor{qqzzzz}{rgb}{0,0.6,0.6}
\definecolor{kk}{rgb}{0,0,0}
\definecolor{eeaaee}{rgb}{0.6, 0.1, 0.6}
\begin{center}
\begin{tikzpicture}[line cap=round,line join=round,>=triangle 45,x=1.5cm,y=1.5cm]
\draw [ffqqqq, dash pattern = on 3pt off 3pt, domain=0:2.6, rotate around={-30:(1,2.5)}] plot ({\x}, {(\x-1.2)^2+0.5});
\fill [ffqqqq, fill opacity = 0.25, domain = 0.18:2.47, rotate around={-30:(1,2.5)}] plot ({\x}, {(\x-1.2)^2+0.5}) arc [radius = 2, start angle = 68, delta angle = 65] -- cycle;
\draw [qqqqff, domain=0.8:3.3, dash pattern = on 3 pt off 3 pt, rotate around={-45:(2,2)}] plot ({\x}, {(\x-2)^2});
\fill [qqqqff, fill opacity = 0.25, domain=0.908:3.19, rotate around={-45:(2,2)}] plot ({\x}, {(\x-2)^2}) arc [radius = 2, start angle = 61, delta angle = 68] -- cycle;
\draw [uququq] (3, 0.95) -- (0.2, 2.6);
\fill [uququq, fill opacity = 0.25] (3, 0.95) -- (0.2, 2.6) -- (3, 2.6) -- cycle;
\draw [kk, dash pattern = on 3 pt off 3 pt] (2.5,0.2) arc [radius=2, start angle=0, delta angle=125];
\begin{scriptsize}
\draw[color=ffqqqq] (0.145, 1.6) node {$U_i \cap B$};
\draw[color=qqqqff] (1.8, 0.8) node {$U_j \cap B$};
\draw[color=eeaaee] (1.1, 1.4) node {$A^{\mathcal{U}}_\alpha \cap B$};
\end{scriptsize}
\begin{large}
\draw[color=black] (2,2.2) node {$P^+$};
\draw[color=black] (-.4, 1.8) node {$B$};
\end{large}
\end{tikzpicture}
\end{center}
\caption{The oriented half space $P^+$ is chosen to intersect the ball $B$ inside $A^{\mathcal{U}}_\alpha$.}
 \label{fig-sphere-cord}
\end{figure}

\begin{proof} Choose a facet $\alpha\in  M(\C)$ such that   $\alpha \supsetneq  \sigma_0$. Because  $\alpha$ is a facet of  $\Delta(\C)$, the atom $A^{\mathcal U }_{\alpha }=\cap_{i\in \alpha} U_i$ is convex open and (by assumption) has a non-empty relatively open intersection with the Euclidean sphere $\partial B$. This implies that  we  can always select an  oriented and {\it closed}  half-space   $P^+\subset \mathbb R ^d $  such that   $P^+ \cap B \subset   A^{\mathcal U }_{\alpha}$,      and $\left( A^{\mathcal U }_{\alpha} \cap  B \right) \setminus   P^+  \neq \varnothing  $ has relatively open intersection with the sphere $\partial B$ (see Figure \ref{fig-sphere-cord}). \\

We define two open covers, $\W=\{W_i\}$, with $W_i\od U_i \cap B$ and  $\V=\{V_i\}$ via the equation \eqref{eq:Vi}, with $Q=  P^+\cap B$.  
 We thus can use Lemma \ref{lemma:code:union}, and conclude that 
 $\code(\V,  X\cap B   )= \code(\{B\cap U_i\}  , B \cap X ) \cup\sigma_0 =\C \cup \sigma_0$. 
 Note that by construction the sets $V_i$ are open and convex, 
 moreover, the cover $\V$ automatically satisfies the same condition on the atoms of facets of $\Delta(\C)$.\\
 
 Finally, if $\U$ is non-degenerate, then   $\V$  is also non-degenerate. Indeed,  because $A^\U_\alpha$ is open,  the only two atoms that were changed, $A^\V_{\alpha}=\left( A^\U_\alpha\cap B\right)  \setminus  P^+$ and $A^\V_{\sigma_0}=P^+\cap B $ are also top-dimensional. Moreover, since the only new pieces of boundaries of $V_i\subseteq U_i$  are introduced on the chord $\partial P^+\cap B$ and on the sphere $\partial B$, if the condition  that for  all $\sigma \subseteq [n]$, $\bigcap_{i\in \sigma}\partial U_i \subseteq \partial \left(\bigcap_{i\in \sigma} U_i\right)$  holds then the same condition should hold for the sets $V_i$. 
\end{proof}

A consecutive application of the above lemma to all the codewords in $\D\setminus \C$ for  any supra-code  $\D$ with the same simplicial complex yields Lemma \ref{lem:Ball:rules}.

\begin{proof}[Proof of Lemma \ref{lem:Ball:rules}]  Let $\U = \{U_i\}$ be an open convex cover in $\mathbb R^d$, $d\geq 2$,  with  $\code\left(\mathcal U,X\right)=\C$. Assume that  there exists an 
open Euclidean ball $B\subset \mathbb R^d $ such that $\code(\{B\cap U_i\},B\cap X) = \C$, and for every maximal set  $\alpha \in M(\C)$, its atom has non-empty intersection with the $(d-1)$-sphere  $\partial B$.   Let $\C  \subsetneq \D\subseteq \Delta(\C) $ and denote $\D\setminus \C=\{\sigma_1,\sigma_2,\dots,\sigma_l\}$. 
Let $\sigma_1\subsetneq  \alpha \in M(\C)$.  Because $\alpha\in M(C)$,  $A^{\U}_{\alpha}=\cap_{i\in \alpha}U_i$  is open, and thus 
 $\partial B \cap  \Cl \left( A^{\U}_{\alpha} \right)$  is relatively open in  $\partial B$.  
 We can now apply Lemma \ref{lemma:chord-cutter} to the ``missing'' codeword $\sigma_1$, and obtain a new cover $\V^{(1)}$ that again satisfies the condition of Lemma \ref{lemma:chord-cutter}. Consecutively applying   Lemma \ref{lemma:chord-cutter} with $\sigma_0=\sigma_j$, $j=2,3,\dots l$, we obtain covers $\V^{(j)}$, so that the last cover, $\V \od \V^{(l)}$ is the desired cover of Lemma  \ref{lem:Ball:rules}.
\end{proof}

%
%

\subsection{A closed convex realization for an intersection-complete code.}\label{apendix:IC}
Here we provide an  explicit  construction of a closed convex cover of an intersection-complete code.   
Intersection-complete codes are max intersection-complete, and thus Theorem \ref{t:mic} ensures that intersection-complete codes are both open convex and closed convex.  Nevertheless, a different construction below may be useful for  applications due to its simplicity. 

\begin{definition}\label{d:potential} The {\it potential cover} of the code $\cC$, is  a  collection  $\mathcal V  = \{V_i\}_{i \in [n]}$ of closed convex sets  $V_i \subset \R^{|\cC|}$, defined as follows. 
For each non-empty codeword $\sigma \in \cC$ let $e_\sigma$ be a unit vector in $\R^{|\cC|}$ so that $\{e_\sigma\}$ is a basis for $\R^{|\cC|}$.
For each $i \in [n]$, we define $V_i $ as the  convex hull
\[ V_i \od \conv \{ e_\sigma \;|\; \sigma \in \cC,\; \sigma\ni i\}.\]
\end{definition}
\noindent Since this is a cover by convex closed sets, the code of the potential cover is closed convex. Note however, that this cover is {\it not}  non-degenerate (Definition \ref{dfn:nondegenerate}), and  cannot be easily extended to an open convex cover.  
\begin{lemma} \label{l:pot_ic} Let $\mathcal V=\{V_i\}$ denote the potential cover of $\C$, and $X\od   \conv \{ e_\sigma \;|\; \sigma \in \cC,\; \sigma\neq \varnothing \}$. 
Then the code of the potential cover of $\C$   is the intersection completion of that code: 
  $\code(\mathcal V,X) = \widehat{\cC}$.
\end{lemma}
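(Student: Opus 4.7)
The plan is to exploit the fact that $\{e_\sigma\}_{\sigma \in \C,\, \sigma \neq \varnothing}$ is a basis, so every $x \in X$ admits a unique barycentric representation $x = \sum_\sigma \lambda_\sigma e_\sigma$ with $\lambda_\sigma \geq 0$ and $\sum_\sigma \lambda_\sigma = 1$. Setting $\mathrm{supp}(x) \od \{\sigma \in \C \setminus \{\varnothing\} : \lambda_\sigma > 0\}$, the computation of $\code(\mathcal V, X)$ reduces to determining, for each $i \in [n]$, whether $x \in V_i$ in terms of $\mathrm{supp}(x)$.

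The first key step is to establish the membership criterion $x \in V_i \iff \mathrm{supp}(x) \subseteq \{\sigma \in \C : i \in \sigma\}$. The ``if'' direction is immediate from the definition $V_i = \conv\{e_\sigma : \sigma \ni i\}$. For the ``only if'' direction, if $x \in V_i$ then $x$ admits some representation as a convex combination of the basis vectors $e_\sigma$ with $\sigma \ni i$; linear independence of the full basis then forces that representation to coincide with the barycentric one, so $\lambda_\sigma = 0$ for every $\sigma \not\ni i$. Consequently the codeword of $x$ is exactly $\{i \in [n] : \mathrm{supp}(x) \subseteq \{\sigma : \sigma \ni i\}\} = \bigcap_{\sigma \in \mathrm{supp}(x)} \sigma$.

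With this in hand, the two inclusions become straightforward. For $\code(\mathcal V, X) \subseteq \widehat{\C}$, each codeword of a point $x \in X$ equals $\bigcap_{\sigma \in \mathrm{supp}(x)} \sigma$, a non-empty-subcode intersection and hence an element of $\widehat{\C}$. For the reverse inclusion, given $\tau = \bigcap_{\nu \in \C'} \nu \in \widehat{\C}$ with $\varnothing \neq \C' \subseteq \C$, I would produce an explicit witness: when $\tau \neq \varnothing$ no $\nu \in \C'$ is empty, and the centroid $x_{\C'} \od \frac{1}{|\C'|} \sum_{\nu \in \C'} e_\nu \in X$ has support exactly $\C'$, so its codeword is $\tau$.

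The only non-routine point is the empty codeword $\tau = \varnothing$. Either the construction tacitly restricts attention to subcodes $\C' \subseteq \C \setminus \{\varnothing\}$ (the only interesting case), or one notes that whenever $\varnothing \in \widehat{\C}$ arises from such a $\C'$ with empty intersection, the same centroid $x_{\C'}$ realizes $\varnothing$ as its codeword. The degenerate situation $\C' = \{\varnothing\}$ is the single place where one must be careful with conventions; once this edge case is dispatched, the membership criterion combined with the two centroid constructions gives $\code(\mathcal V, X) = \widehat{\C}$.
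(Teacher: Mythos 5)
Your proposal is correct and mirrors the paper's argument: your membership criterion $x\in V_i \iff \mathrm{supp}(x)\subseteq\{\sigma:i\in\sigma\}$ is an equivalent reformulation of the paper's identity $\bigcap_{i\in\sigma}V_i = \conv\{e_\tau : \tau\in\C,\ \tau\supseteq\sigma\}$, and both arguments produce the witness for the reverse inclusion via the centroid $\tfrac{1}{|\C'|}\sum_{\nu\in\C'}e_\nu$. The paper dispatches the $\varnothing$-codeword edge case you flag at the outset by observing $\varnothing\notin\widehat{\C}\iff\exists\, i,\ V_i=X\iff X=\bigcup_i V_i\iff\varnothing\notin\code(\V,X)$, which is the convention you would need to make your sketch of that case airtight.
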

\begin{proof} 
 Note that because the vectors $e_\sigma$ are linearly independent, 
$$  \varnothing \not \in \widehat C \;\iff\; \exists i\in [n], V_i=X
 \; \iff\;  X=\bigcup_{i\in [n]} V_i
\; \iff\; \varnothing \not \in \code(\mathcal V,X).
$$
Moreover,   
\begin{equation}\label{eq:intersectionPi} \bigcap_{i\in \sigma} V_i=\conv \left \{ e_\tau \vert \, \tau\in \cC, \, \tau \supseteq \sigma \right \},
\end{equation}
in particular, $ \code\left ( \V, X \right ) \subseteq \Delta(\cC)$.
To show that $\code\left ( \V, X \right ) \subseteq \widehat{\cC}$, assume that a non-empty $\sigma \in  \code\left ( \V, X \right )$, i.e.\    $A^{\V}_\sigma=\left(\bigcap_{i \in \sigma}V_i \right) \setminus \bigcup_{j \notin \sigma}V_j$ is non-empty. 
If there exists an index  $j \in \left(\bigcap_{\sigma \subseteq \tau\in \C } \tau \right) \setminus \sigma,$ 
then by  \eqref{eq:intersectionPi}, 
 $\bigcap_{i \in \sigma}V_i \subset V_j$, which contradicts $\sigma \in \code\left ( \V, X \right ) $.
Hence $\sigma = \bigcap_{\cC\ni \tau\supseteq \sigma}\tau  \in \widehat{\cC}$. Conversely, assume that a non-empty $\sigma\in \widehat{\cC}$ and let $\sigma_1, \dots, \sigma_k \in \cC$ be code elements such that $\sigma = \bigcap_{\ell = 1}^k \sigma_\ell$.
Then the point $\frac{1}{k} \sum_{\ell = 1}^k e_{\sigma_\ell} \in \left( \bigcap_{i \in \sigma}V_i \right) \setminus \bigcup_{j \notin \sigma} V_j$.
Hence $\sigma \in \code\left ( \V, X \right )$.
\end{proof}

\noindent An immediate corollary is that any intersection-complete code is a closed convex code.\\

\section*{Acknowledgments}
VI was supported by Joint NSF DMS/NIGMS grant  R01GM117592,   NSF IOS-155925, and 
 ARO award W911NF-15-1-0084 for DARPA. CG  and VI also gratefully acknowledge the support of  SAMSI, under grant NSF DMS-1127914. We also  thank Carina Curto for numerous discussions. 

\bibliography{ConvexCodesRefs}
\bibliographystyle{plain}

\end{document}